\documentclass[12pt,british]{article}
\usepackage{color}
\usepackage{babel}
\usepackage{amsmath,amsthm}
\usepackage{amsfonts}
\usepackage{amssymb}
\usepackage[utf8]{inputenc}
\usepackage{multicol}
\usepackage[all]{xy}
\usepackage{enumerate}
\usepackage{wrapfig}
\usepackage{geometry}
\usepackage{array}
\newtheorem{theorem}{Theorem}[section]

\newtheorem{corollary}[theorem]{Corollary}

\newtheorem{lemma}[theorem]{Lemma}
\newtheorem{proposition}[theorem]{Proposition}

\theoremstyle{definition}

\newtheorem{notation}[theorem]{Notation}

\newtheorem{definition}[theorem]{Definition}

\newtheorem{remark}[theorem]{Remark}

\newcommand{\n}{\mathbb{N}}

\usepackage[%pdftex,                %
bookmarks=true,
breaklinks=true,
bookmarksnumbered = true,%     % Signets numérotés
colorlinks= true,%     % Liens en couleur
urlcolor= green,
anchorcolor = yellow,
citecolor=blue,%  % Couleur des liens externes
]{hyperref}                   % Utilisation de HyperTeX

\begin{document}

\title{Homotopy minimal periods for fiber maps on circle bundles over the circle}
\author{WESLEM LIBERATO SILVA
~\footnote{Departamento de Ciências Exatas, Universidade Estadual de Santa Cruz, Rodovia Jorge Amado, Km 16, Bairro Salobrinho, CEP 45662-900, Ilhéus-BA, Brazil.
e-mail: \texttt{wlsilva@uesc.br}}
\and
RAFAEL MOREIRA DE SOUZA
~\footnote{Universidade Estadual de Mato Grosso do Sul, 
Cidade Universit\'aria de Dourados - Caixa postal 351 - CEP: 79804-970, Dourados-MS, Brazil.
e-mail: \texttt{moreira@uems.br}}
%thanks{}
}
%
%\subjclass{Primary 55M20; Secondary 37C25}

%\keywords{Periodic points, fiber bundle, minimal periods}

%\date{December 26, 2015}

\maketitle

%%%%%%%%%%%%%%%%%%%%%%%%%%%%%%%%%%%%%%%%%%%%%%%%%%%%%%%%%%%%%%%%%%%%%%%%%%%%%%%%%%%%%%%%%%%%%%%%%%%%%%%%%%%%%%%%%%%%%%%%%%%%%%%%%%%%%%%%
% ABSTRACT
%%%%%%%%%%%%%%%%%%%%%%%%%%%%%%%%%%%%%%%%%%%%%%%%%%%%%%%%%%%%%%%%%%%%%%%%%%%%%%%%%%%%%%%%%%%%%%%%%%%%%%%%%%%%%%%%%%%%%%%%%%%%%%%%%%%%%%%%

\begin{abstract}

Given a fiber bundle $Y\rightarrow M\xrightarrow{p}B$ and a fiber map $f: M\rightarrow M$ over $B$, we introduce the notion of fiberwise homotopy minimal periods, denoted by $H_BPer(f).$ This invariant records the periods that occur among representatives of the fiberwise homotopy class of $f.$ We investigate the case in which both the base and the fiber are circles. Up to isomorphism, the corresponding total spaces are the torus and the Klein bottle. Using Nielsen theory for fiber maps and Nielsen-type periodic numbers, we obtain a complete classification of $H_{S^1}Per(f)$ for fiber maps of the torus and the Klein bottle over $S^1$. In particular, we identify the exceptional cases in which some periods can be removed by fiberwise homotopy, including the case $H_{S^1}Per(f)=\mathbb{N}\setminus{2}.$
\end{abstract}

%
%
%
%

%\begingroup%Localising the change to `thefootnote'.
%\renewcommand{\thefootnote}{}%Removing the footnote symbol.
\footnotetext{ Primary 55M20; Secondary 37C25 \\	
Key words: Periodic points, fiber bundle, minimal periods}
%\endgroup
%

%\noindent

%%%%%%%%%%%%%%%%%%%%%%%%%%%%%%%%%%%%%%%%%%%%%%%%%%%%%%%%%%%%%%%%%%%%
%INTRODUCTION
%%%%%%%%%%%%%%%%%%%%%%%%%%%%%%%%%%%%%%%%%%%%%%%%%%%%%%%%%%%%%%%%%%%%

\section{Introduction}

%%%%%%%%%%%%%%%%%%%%%%%%%%%%%%%%%%%%%%%%%%%%%%%%%%%%%%%%%%%%%%%%%%%%

Let $f: X \rightarrow X$ be a self-map of a topological space and let $n$ be a positive integer. A point $x \in X$ is called an $n$-periodic point of $f$ if $f^n(x)=x.$ Its minimal period is the smallest positive integer $n$ for which $f^n(x)=x.$ We denote by $P_n(f)$ the set of points of minimal period $n,$ and by
$$
Per(f)=\{n\in\mathbb N\mid P_n(f)\neq\varnothing\}
$$
the set of minimal periods of $f.$

Periodic points play an important role in the study of dynamical systems. In many situations, topological methods provide information about the existence of periodic orbits that is stable under perturbation. One such invariant is the set of homotopy minimal periods
$$
HPer(f)=\bigcap_{g\sim f}Per(g),
$$
where the symbol $\sim$ means a homotopy. Thus $n\in HPer(f)$ means that every map in the homotopy class of $f$ cannot simultaneously avoid period $n.$

A natural setting in which to refine this problem is provided by fiber-preserving maps. Let $ Y\rightarrow M\xrightarrow{p}B$ 
be a fiber bundle. A map $f: M \rightarrow M $ is called a fiber map over $B$ if $ p\circ f=p. $ In other words, $f$ preserves every fiber of the bundle. In this setting it is natural to restrict the allowed homotopies to homotopies through fiber maps. We therefore write $ f \sim_B g$ if $f$ and $g$ are homotopic over $B.$

The purpose of this paper is to study the periodic information that remains when only fiberwise homotopies are allowed. We make the following definition.

\begin{definition}
	
Let $ Y\rightarrow M\xrightarrow{p}B$ be a fiber bundle and let $ f: M\rightarrow M$ be a fiber map over $B.$ The set of fiberwise homotopy minimal periods of $f$ is
$$
H_BPer(f)=\bigcap_{g\sim_B f}Per(g).
$$	
\end{definition}

Thus, $n\in H_BPer(f)$ if and only if every fiberwise homotopy class representative of $f$ has a periodic point of minimal period $n.$

The distinction between ordinary and fiberwise homotopy is essential. In general, $ HPer(f)\subseteq H_BPer(f),$ because the family of fiberwise homotopies is contained in the family of ordinary homotopies. The inclusion may be strict. For example, when the Klein bottle $K$ is regarded as a circle bundle over $S^1$, the identity map satisfies  $ HPer(Id_K)=\varnothing, $
whereas the fiberwise homotopy class of the identity contains no fixed point free map and hence $ H_{S^1}Per(Id_K)={1}.$
This example shows that restricting the admissible homotopies to those preserving the fibration can change the set of homotopy minimal periods.

The study of fixed points and periodic points of fiber-preserving maps is closely related to Nielsen theory over a base. Nielsen numbers for fiber maps were introduced in \cite{G-K-09} using techniques involving fixed point classes, indices and bordism. More recently, Nielsen-type periodic numbers for maps over a base were introduced in \cite{S-S-25} and related to the detection of fiberwise homotopy minimal periods. These results provide effective tools for determining $H_BPer(f).$

The contribution of the present paper is different from merely computing Nielsen numbers of iterates. Our objective is to determine the set of periods that is forced by a fiberwise homotopy class. In particular, we use Nielsen theoretic information to obtain a complete classification of the sets $H_{S^1}Per(f)$ for all fiber maps of the total spaces of circle bundles over the circle.

We restrict our attention to the case $B=S^1 $ and $ Y=S^1.$ Up to isomorphism of fiber bundles, there are exactly two such bundles: the torus $T^2$ and the Klein bottle $K.$ Consequently, the problem reduces to determining the fiberwise homotopy minimal periods for maps of these two spaces.

For the torus, every fiber map over $S^1$ is fiberwise homotopic to a map
$f_{r,s}(\langle x,y \rangle) = \langle x^r y^s,y\rangle, r,s\in\mathbb{Z}.$ Our first main result gives a complete classification.

\begin{theorem} \label{main-result}

Let $f: T^2\rightarrow T^2$ be a fiber map over $S^1.$ Then $f$ is homotopic over $S^1$ to a map $f_{r,s}$ of the form $f_{r,s}(\langle x,y \rangle) = \langle x^r y^s,y\rangle $ for some $r,s\in\mathbb{Z}.$ Moreover,
$$H_{S^1}Per(f) = \left\{
\begin{array}{ll}
	\varnothing, & (r,s)=(1,0),\\
	\{1\}, & r=-1\text{ or }r=0, \\
	\mathbb{N}\setminus{2}, & r=-2,\\
	\mathbb{N}, & |r|>1,\ r\neq -2.\\
\end{array} \right.
$$

\end{theorem}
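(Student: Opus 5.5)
The plan is to reduce everything to the known classification of homotopy minimal periods for self-maps of the circle. I use the projection $\langle x,y\rangle\mapsto y$, so a fiber map has the form $g(\langle x,y\rangle)=\langle g_y(x),y\rangle$, where $y\mapsto g_y$ is a loop in $\mathrm{Map}(S^1,S^1)$. Iterates stay fiberwise: $g^n(\langle x,y\rangle)=\langle g_y^n(x),y\rangle$. Hence
$$Per(g)=\bigcup_{y\in S^1}Per(g_y).$$

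\textbf{Normal form.} The degree of $g_y$ is a constant $r$. The degree-$r$ component of $\mathrm{Map}(S^1,S^1)$ is homotopy equivalent to $S^1$, via evaluation at a base point. So the fiberwise homotopy class of the loop $y\mapsto g_y$ is recorded by a single integer $s$, which gives $f\sim_{S^1}f_{r,s}$.

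\textbf{Lower bound.} Suppose $g\sim_{S^1}f$ and fix a fiber over $y_0$. Then $g_{y_0}$ is a circle map of degree $r$, so
$$Per(g)\supseteq Per(g_{y_0})\supseteq HPer(z^r).$$
This gives $H_{S^1}Per(f)\supseteq HPer(z^r)$. I then invoke the classical circle result:
\begin{itemize}
\item $HPer(z^r)=\varnothing$ for $r=1$;
\item $HPer(z^r)=\{1\}$ for $r\in\{0,-1\}$;
\item $HPer(z^r)=\mathbb{N}\setminus\{2\}$ for $r=-2$;
\item $HPer(z^r)=\mathbb{N}$ for $|r|>1$, $r\neq-2$.
\end{itemize}
This already settles the case $|r|>1$, $r\neq -2$. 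Alternatively, the Nielsen-type periodic numbers over the base from \cite{S-S-25} could serve as the lower bound.

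\textbf{Upper bound.} For each remaining case I need one representative $g\sim_{S^1}f_{r,s}$ with $Per(g)$ exactly equal to the claimed set.
\begin{itemize}
\item For $(r,s)=(1,0)$, take $g(\langle x,y\rangle)=\langle e^{2\pi i\alpha}x,y\rangle$ with $\alpha$ irrational. It is fiberwise homotopic to the identity and has no periodic points.
\item For $r=0$, the map $f_{0,s}(\langle x,y\rangle)=\langle y^s,y\rangle$ satisfies $f^2=f$, so every periodic point is fixed and $Per=\{1\}$.
\item For $r=-1$ and $r=-2$, the model $f_{r,s}$ itself is not good enough. For example, $f_{-1,s}^2=\mathrm{Id}$, so every point has period at most $2$.
\end{itemize}

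\textbf{Construction for $r\in\{-1,-2\}$.} Choose a circle map $h$ of degree $r$ with $Per(h)=HPer(z^r)$, and spread it over the base by conjugating with rotations:
$$g_y=R_{a(y)}\circ h\circ R_{a(y)}^{-1}.$$
Conjugation does not change the set of minimal periods, so $Per(g)=Per(h)$. Writing $h=z^r$ in additive coordinates, $R_a\circ h\circ R_a^{-1}$ is $h$ post-rotated by $(1-r)a$. To realize the class $s$, the angle $a$ must therefore run from $0$ to $2\pi s/(1-r)$ as $y$ goes once around the base. For this path to close up as a loop of maps, $h$ must commute with the rotation $R_{2\pi/(1-r)}$, which is rotation by $\pi$ for $r=-1$ and by $2\pi/3$ for $r=-2$. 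The model $z^r$ has this symmetry, but its periods are wrong.

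\textbf{Main obstacle.} The crux is to build, for $r=-1$ and $r=-2$, a degree-$r$ circle map $h$ that is $\mathbb{Z}_{|1-r|}$-equivariant under rotation and still has only the minimal periods in $HPer(z^r)$. My first attempt will be to take the standard non-equivariant realizing map and symmetrize it. Concretely, I would work on the quotient circle $S^1/\mathbb{Z}_{|1-r|}$, where a degree-$r$ map descends to a degree-$r$ map. There I would pick a realizing map with the attracting/repelling behavior used in the classical proofs: for $r=-1$, two fixed points with all other orbits converging to them; for $r=-2$, periods $\mathbb{N}\setminus\{2\}$. Then I lift it back and check that the lift creates no new periods. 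If lifting does create extra periods, the fallback is to build the family $g_y$ directly by a fiberwise homotopy that is supported near the fixed-point set of $f_{r,s}$ and preserves the dynamics fiber by fiber.
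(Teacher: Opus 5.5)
\textbf{Gap: the upper bound for $r=-1$ with $s$ odd.} This cannot be filled, because the statement is false there.

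First, your equivariant $h$ does not exist in this case. If $H(\theta+\tfrac12)=H(\theta)-\tfrac12$, then $H-\mathrm{id}$ drops by $1$ on $[0,\tfrac12]$. So some $\theta_0$ has $h(\theta_0)=\theta_0+\tfrac12$, and $\{\theta_0,\theta_0+\tfrac12\}$ is a $2$-cycle. In your quotient picture, these are the fixed points of the quotient map lying in one of its two Nielsen classes, so lifting always creates period $2$.

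Second, no fallback construction can work, because period $2$ is forced. Write $g\sim_{S^1}f_{-1,s}$ fiberwise as $g_t$, $t\in[0,1]$, with continuous lifts $G_t$. Then $G_t(\theta+1)=G_t(\theta)-1$ and $G_1=G_0+s$.
\begin{itemize}
\item The parity of $G_t(\theta)-\theta$ at fixed points is well defined. It splits $Fix(g_t)=E_t\sqcup O_t$ into the two Nielsen classes, each of $g_t$-index $1$.
\item For odd $s$, the relation $G_1=G_0+s$ gives $E_1=O_0$.
\item Suppose no $g_t$ had a point of minimal period $2$. Then $Fix(g_t^2)=E_t\sqcup O_t$ for all $t$. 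Since the parity is locally constant in $(t,\theta)$, homotopy invariance makes $\mathrm{ind}(g_t^2,E_t)$ constant in $t$, so $\mathrm{ind}(g_0^2,E_0)=\mathrm{ind}(g_0^2,O_0)$.
\item These two indices sum to $L(g_0^2)=0$, so both vanish. This contradicts Dold's congruence $\mathrm{ind}(g_0^2,E_0)\equiv\mathrm{ind}(g_0,E_0)=1 \pmod 2$.
\end{itemize}
Dynamically, the attracting class must turn into the repelling one as you go around the base. So some multiplier crosses $-1$, which is a period doubling.

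Combined with $f_{-1,s}^2=\mathrm{Id}$, this gives $H_{S^1}Per(f_{-1,s})=\{1,2\}$ for odd $s$. The value $\{1\}$ in the statement is correct only for even $s$. The paper's proof does not settle this case either. It infers $\{1\}$ from $f_{-1,s}^2=\mathrm{Id}$, but that only yields $H_{S^1}Per(f_{-1,s})\subseteq\{1,2\}$, since every non-fixed point of $f_{-1,s}$ has minimal period $2$.

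\textbf{What works.} The rest of your plan is sound. Your lower bound, via one fiber and the circle classification of $HPer(z^r)$, is more direct than the paper's Nielsen-number and $A_n$ estimates for $r<-1$. The paper uses the same fiber-restriction idea only for $|r|>1$, $r\neq-2$.
\begin{itemize}
\item For $r=-1$ and even $s$, your rotation angle ends at $s/2$ full turns, so no symmetry is needed. Conjugating any degree $-1$ map $h$ with $Per(h)=\{1\}$ works, for instance the map with lift $\theta\mapsto-\theta+\epsilon\sin 2\pi\theta$ for small $\epsilon>0$.
\item For $r=-2$ the obstacle you describe is illusory. The map $z^{-2}$ commutes with rotation by $2\pi/3$ and already has $Per(z^{-2})=\mathbb{N}\setminus\{2\}$. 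So $f_{-2,s}$ itself is the required representative; indeed $Fix(f_{-2,s})=Fix(f_{-2,s}^2)=\{x^3=y^s\}$.
\end{itemize}
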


In particular, the set $H_{S^1}Per(f)$ is completely determined by the fiber degree $r,$ except for the trivial distinction between the homotopy classes with $r=1.$ 
For the Klein bottle, every fiber map is fiberwise homotopic to a map
$f_{r,s}(\langle z, t \rangle) =  \langle z^r e^{2\pi ist},t\rangle, r\in\mathbb{Z} $ and $s\in \{0,1\}.$ We obtain the following classification.

\begin{theorem} \label{main-result-2}

Let $f: K\rightarrow K$ be a fiber map over $S^1.$ Then $f$ is homotopic over $S^1$ to a map $f_{r,s},$ where $r\in\mathbb{Z}$ and $s\in{0,1}.$ Moreover,
$$
H_{S^1}Per(f) = \left \{
\begin{array}{ll}
	\{1\},& (r,s)\in \{(-1,0),(-1,1),(0,0),(1,0),(0,1)\}, \\
	\{2\},&(r,s)=(1,1), \\
	\mathbb{N} \setminus\{2\},& r=-2, \\
	\mathbb{N}, & |r|>1,\ r \neq -2.\\
\end{array} \right.
$$
\end{theorem}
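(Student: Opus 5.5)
Write $K=\mathbb{R}\times S^1/\!\sim$, $\langle z,t+1\rangle=\langle \bar z,t\rangle$, with $p\langle z,t\rangle=e^{2\pi i t}$; note $z\mapsto z^r$ commutes with conjugation. The plan has two parts: a normal form, then the iterates of $f_{r,s}$.

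\emph{Normal form.} A fiber map restricts on each fiber to a self-map of $S^1$. Its degree $r$ is locally constant, hence constant. Lift $f$ to a map of $S^1\times\mathbb{R}$ (or of $\mathbb{R}^2$) commuting with the deck transformation $(z,t)\mapsto(\bar z,t+1)$. Deform it fiberwise, by straight-line homotopy in the universal cover of the fiber, to $z\mapsto z^r\phi(t)$ with $\phi:\mathbb{R}\to S^1$ satisfying $\phi(t+1)=\overline{\phi(t)}$. Write $\phi(t)=e^{2\pi i\theta(t)}$, so $\theta(t+1)+\theta(t)\in\mathbb{Z}$. Homotopies of $\theta$ through functions with this property form a space with components indexed by $\theta(t+1)+\theta(t)\bmod 2$. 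This gives $s\in\{0,1\}$ and the representative $\theta(t)=st$ (or a constant-shift equivalent).

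\emph{Lower bounds.} The $n$-th iterate is $f^n\langle z,t\rangle=\langle z^{r^n}e^{2\pi i s t(1+r+\dots+r^{n-1})},t\rangle$. I would invoke the Nielsen theory over $S^1$ of \cite{G-K-09} and the Nielsen-type periodic numbers of \cite{S-S-25}, as recalled earlier. For $|r|\ge2$, the fiberwise fixed-point data of $f^n$ on a fiber is that of a degree-$r^n$ circle map, giving $|1-r^n|$ essential classes. Periods are then detected via the periodic number $NP_n$, computed by Möbius inversion over divisors as in the Jiang--Halpern--Alsedà computation for circle maps, but now with orientation reversal. For $r\neq-2$ these counts force every $n$. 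For $r=-2$, the counts $|1-(-2)^2|=3=|1-(-2)|$ mean that the degree-$4$ iterate's classes can all be reducible to period $1$, so $2$ is missing. This matches the circle case, where $\mathbb{N}\setminus\{2\}$ arises exactly for degree $-2$.

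\emph{Small $|r|$.} For $r\in\{-1,0\}$, $N_{S^1}(f)\ne0$ forces period $1$. Affine-type representatives, e.g.\ $z\mapsto \bar z\cdot(\text{rotation})$ or constant, realise only fixed points and period $1$, after checking that the twisting compatible with the bundle yields no higher minimal periods. For $(1,0)$ on $K$, the Klein bottle twist forces a fixed point fiberwise: the rotation $\phi$ must satisfy $\phi(t+1)=\overline{\phi(t)}$, so $\phi$ takes the value $1$ somewhere, giving $\{1\}$. For $(1,1)$, the map $f=\langle z e^{2\pi i t},t\rangle$ has $\theta(t)+\theta(t+1)$ odd, so $\phi=1$ is impossible but $\phi=-1$ must occur somewhere. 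Hence period $2$ is forced, and fixed points are avoidable. The argument is by the intermediate value theorem applied to $\theta$ in the lift; then $f^2$ acts by $\phi^2$, which hits $1$.

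\emph{Main obstacle.} The hardest step is realisation: exhibiting, for each case, a fiberwise-homotopic representative whose $Per$ equals exactly the claimed set, for example removing period $2$ when $r=-2$. I would first try the circle-map constructions fiberwise, parametrised continuously in $t$ and made compatible with conjugation. This works because the circle model for degree $-2$ can be chosen to commute with $z\mapsto\bar z$ after conjugating by a reflection-symmetric homeomorphism.
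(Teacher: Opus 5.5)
There is a genuine gap in the small-$|r|$ part. For $(r,s)=(-1,1)$ it cannot be closed at all, because the value claimed there is false.

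\textbf{The case $r=-1$.} Your representatives $\langle z,t\rangle\mapsto\langle \bar z\phi(t),t\rangle$ are involutions, since $\overline{\bar z\phi}\,\phi=z$. So every point off the fixed circles has minimal period $2$, and these maps realise $\{1,2\}$, not $\{1\}$.

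The paper's maps $\langle x,y\rangle\mapsto\langle \frac18\sin(2\pi x)+\frac12(+y),y\rangle$ do not help either. Each fiber is sent onto an arc, so they have fiber degree $0$ and lie in the classes $r=0$; the claim $f_\#(\alpha)=\alpha^{-1}$ made there is wrong.

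For $(-1,0)$ a repair exists. Use $\langle z,t\rangle\mapsto\langle h(z),t\rangle$ with $h$ of degree $-1$, commuting with $z\mapsto\bar z$, contracting at $1$ and expanding at $-1$. For example, take the odd lift $H$ with $H(x+1)=H(x)-1$, $H(x)=-x/4$ on $[0,\frac38]$, and $H$ affine from $-\frac3{32}$ to $-\frac12$ on $[\frac38,\frac12]$. This gives $Fix(h^2)=Fix(h)=\{\pm1\}$.

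For $(-1,1)$, however, every $g\sim_{S^1}f_{-1,1}$ has a point of minimal period $2$. The argument is as follows.
Write $g\langle z,t\rangle=\langle g_t(z),t\rangle$, $t\in[0,1]$, with $g_1=cg_0c$ and $c(z)=\bar z$. Choose lifts satisfying $\tilde G_1(x)=-\tilde G_0(-x)+m$. Here $m \bmod 2$ is a fiberwise homotopy invariant, and $m=1$ for $f_{-1,1}$, since $\tilde G_t(x)=-x+t$.
Each $g_t$ has two fixed point classes $C^0_t,C^1_t$, coming from the lifts $\tilde G_t$ and $\tilde G_t+1$, each of index $+1$. Since $m$ is odd, $C^0_1=c(C^1_0)$.
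Suppose $Fix(g^2)=Fix(g)$. Then both classes are isolated in $Fix(g_t^2)$. Homotopy invariance along $t$, together with $g_1^2=cg_0^2c$, gives $\mathrm{ind}(g_0^2;C^0_0)=\mathrm{ind}(g_1^2;C^0_1)=\mathrm{ind}(g_0^2;C^1_0)$. The two indices sum to $L(g_0^2)=0$, so both vanish.
But Dold's congruence gives $\mathrm{ind}(g_0^2;C)\equiv\mathrm{ind}(g_0;C)=1 \pmod 2$. To see this directly, perturb to a generic smooth map: each fixed point contributes $\pm1$ to both indices, and $2$-cycles contribute in pairs. This is a contradiction.
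Hence $H_{S^1}Per(f_{-1,1})=\{1,2\}$. Dynamically, the monodromy carries the attracting class to the repelling one, which forces a period doubling.

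\textbf{The Nielsen part.} Your count of $|1-r^n|$ essential classes is the Nielsen number of the restriction to one fiber, not $N_{S^1}(f^n)$. On $\mathbb{K}$ the monodromy pairs up conjugate fiber classes, and $N_{S^1}(f^n)$ is $\frac{|r^n-1|}{2}$ or $\bigl[\frac{|r^n-1|}{2}\bigr]+1$ (Proposition~\ref{prop-n-k}).

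With the correct numbers, ``for $r\neq-2$ these counts force every $n$'' is exactly what must be proved, and you give no argument for it. The $NP_n$ route would also need the $n$-toral and $N_{S^1}=R_{S^1}$ hypotheses of Theorem~\ref{th-nbpn}, which you do not check for $\mathbb{K}$. The paper uses Proposition~\ref{prop-hpers1} instead: the ratio bound of Lemma~\ref{lemma-hpers1} for $r\ge2$ or $r\le-4$, and separate inductions for $r=-2$ and for $r=-3$, where $N_{S^1}(f^2)/N_{S^1}(f)=5/3<2$.

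Equal Nielsen numbers do not by themselves remove a period: for $(-1,1)$, $N_{S^1}(f^2)=N_{S^1}(f)=1$, yet $2$ is forced. So for $r=-2$ you need an explicit representative. This is easy rather than the ``main obstacle'': $f_{-2,0}$ already has $Fix(f^2)=Fix(f)=\{z^3=1\}$, and likewise $g_{-2,1}$.

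Your intermediate value arguments for $(1,0)$ and $(1,1)$ treat only rotations $z\phi(t)$. They extend to arbitrary representatives if you apply them to $\psi_t(x)=\tilde G_t(x)-x$ (with the lift chosen so that $m=0$) and to $\psi_t(x)=\tilde G_t^2(x)-x-m$, respectively. Both satisfy $\psi_1(x)=-\psi_0(-x)$. Zeros of the second lie in an odd lift class of $g^2$, so they are not fixed by $g$.

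Finally, in the normal form, $\theta(t)=st$ violates $\theta(t+1)+\theta(t)\in\mathbb{Z}$; take $\theta\equiv s/2$ instead.
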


The exceptional case $r=-2$ is particularly noteworthy: period $2$ can be removed by a fiberwise homotopy, whereas every period $n>2$ is forced. This phenomenon occurs for both the torus and the Klein bottle.

The paper is organized as follows. In Section 2 we recall the necessary facts from Nielsen theory for fiber maps and establish criteria that will be used to detect fiberwise homotopy minimal periods. Section 3 is devoted to the torus and contains the proof of Theorem \ref{main-result}. Section 4 treats the Klein bottle and proves Theorem \ref{main-result-2}.

%%%%%%%%%%%%%%%%%%%%%%%%%%%%%%%%%%%%%%%%%%%%%%%%%%%%%%%%%%%%%%%%%%%%

\section{Preliminaries and generalities}

%%%%%%%%%%%%%%%%%%%%%%%%%%%%%%%%%%%%%%%%%%%%%%%%%%%%%%%%%%%%%%%%%%%%
%%%%%%%%%%%%%%%%%%%%%%%%%%%%%%%%%%%%%%%%%%%%%%%%%%%%%%%%%%%%%%%%%%%%%%

In this section we present some useful results for fiber maps over $B$ in order to compute $H_{B}Per(f).$ Throughout this section $Y \to M \to B$ will be a fiber bundle with base $B,$ fiber $Y$ and $f: M \to M$ is a fiber map  over $B.$ 

Note that If $f\sim_B g,$ then $ H_BPer(f) = H_BPer(g).$ Indeed, the relation $\sim_B$ is an equivalence relation, and therefore $f$ and $g$ have exactly the same fiberwise homotopy class of representatives. Thus $H_BPer$ is an invariant of the fiberwise homotopy class.

\begin{proposition} \label{prop-restriction}
Let $f: M \to M$ be a fiber map over $B$. Let $\widetilde{f}: Y \to Y$ be the restriction of $f$ to the fiber $Y,$ that is, $\widetilde{f} = f|_{Y}.$ Then
$$Per(\widetilde{f}) \subset Per(f).$$
\end{proposition}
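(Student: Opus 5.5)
The argument is direct, because the fiber is invariant under $f$. Fix a base point $b_0\in B$ and identify $Y$ with the fiber $Y_{b_0}=p^{-1}(b_0)$, writing $i: Y_{b_0}\hookrightarrow M$ for the inclusion. Since $p\circ f=p$, for every $y\in Y_{b_0}$ we have $p(f(y))=p(y)=b_0$, so $f(Y_{b_0})\subset Y_{b_0}$. Hence the restriction $\widetilde{f}=f|_{Y_{b_0}}:Y_{b_0}\to Y_{b_0}$ is a well-defined self-map, and $f\circ i=i\circ\widetilde{f}$.

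The first step is to show by induction on $k$ that $f^k\circ i=i\circ \widetilde{f}^{\,k}$ for all $k\geq 1$. This follows by composing the relation $f\circ i=i\circ\widetilde{f}$ repeatedly: if $f^{k}\circ i=i\circ\widetilde{f}^{\,k}$, then $f^{k+1}\circ i=f\circ i\circ \widetilde f^{\,k}=i\circ\widetilde f^{\,k+1}$. In other words, on points of the fiber, the iterates of $f$ and of $\widetilde{f}$ coincide.

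Next, take $n\in Per(\widetilde{f})$ and a point $y\in P_n(\widetilde{f})$. Then $\widetilde f^{\,n}(y)=y$ and $\widetilde f^{\,k}(y)\neq y$ for $1\le k<n$. Since $i$ is injective, the identity from the first step gives $f^n(y)=y$ and $f^k(y)\neq y$ for $1\le k<n$. So $y\in P_n(f)$, hence $P_n(f)\neq\varnothing$ and $n\in Per(f)$.

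No real obstacle is expected. The only point to be careful about is that the \emph{minimal} period is preserved, not merely the fact that $y$ is periodic. This is exactly where injectivity of the inclusion, together with the exact equality of iterates on the fiber, is used. If one prefers to read $\widetilde f$ as the restriction to an arbitrary fiber $Y_b$, the same argument applies verbatim with $b$ in place of $b_0$.
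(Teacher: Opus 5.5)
Your proposal is correct and follows the paper's proof: both use the identity $f^{m}\circ\iota=\iota\circ\widetilde{f}^{\,m}$ on the fiber, together with injectivity of the inclusion, to show that a point of minimal period $n$ for $\widetilde{f}$ also has minimal period $n$ for $f$. Your version additionally spells out two steps the paper leaves implicit: that $p\circ f=p$ makes the fiber $f$-invariant, and the induction giving the iterate identity.
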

\begin{proof}
Suppose that $n \in Per(\widetilde{f}).$ This implies that exists $x \in Y$ such that $\widetilde{f}^{n}(x) = x$ and $\widetilde{f}^{k}(x) \neq x$ for all $k < n.$ For each $m \in \mathbb{N}$ and $y \in Y$ we have 
$$f^{m}(\iota(y)) = \iota(\widetilde{f}^{m}(y)), $$
where $\iota: Y \to M$ is the inclusion. Therefore, we have 
$f^{n}(\iota(x)) =\iota(\widetilde{f}^{n}(x))  = \iota(x)$ and $f^{k}(\iota(x)) \neq \iota(x)$ for all 
$k < n.$ This implies that $n \in Per(f).$
\end{proof}

The following is also an elementary but useful relation between ordinary and fiberwise homotopy minimal periods.

\begin{proposition}
Let $f: M \to M$ be a fiber map over $B.$ We have $HPer(f) \subseteq H_BPer(f).$
\end{proposition}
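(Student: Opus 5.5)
The inclusion $HPer(f)\subseteq H_BPer(f)$ follows by comparing the index sets of the two intersections. By definition,
$$HPer(f)=\bigcap_{g\sim f}Per(g), \qquad H_BPer(f)=\bigcap_{g\sim_B f}Per(g).$$
The plan is to show that the family indexed in the second intersection is a subfamily of the one indexed in the first. The inclusion then holds because intersecting over fewer sets can only give a larger set.

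The first step is to check that $g\sim_B f$ implies $g\sim f$. A homotopy over $B$ is a map $H\colon M\times[0,1]\to M$ with $H_0=f$, $H_1=g$, and $p\circ H_t=p$ for every $t$. Forgetting the condition $p\circ H_t=p$ leaves an ordinary homotopy from $f$ to $g$, so every such $g$ is also homotopic to $f$ in the ordinary sense.

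Next, take $n\in HPer(f)$. Then $n\in Per(g)$ for every map $g$ with $g\sim f$. In particular this holds for every $g$ with $g\sim_B f$, by the first step, so $n\in\bigcap_{g\sim_B f}Per(g)=H_BPer(f)$.

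There is no real obstacle here. The only point to note is the convention for an empty intersection over the family of fiber maps, and this family is never empty, since $f\sim_B f$. So both intersections are intersections of nonempty families of subsets of $\mathbb N$, and the argument above applies directly.
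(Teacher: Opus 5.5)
Your proposal is correct and is the paper's argument: the paper simply observes that $\{g \mid g\sim_B f\}\subseteq\{g \mid g\sim f\}$, so $H_BPer(f)$ is an intersection over a subfamily and contains $HPer(f)$. Your extra remarks — forgetting the condition $p\circ H_t=p$, and noting the family is nonempty since $f\sim_B f$ — just spell out that one-line argument.
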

\begin{proof}
Just observe that $\{g \mid g \sim_{B} f  \}  \subseteq \{g \mid g \sim f  \}.$ Here the symbol $\sim$ means a ordinary homotopy. 
\end{proof}

The inclusion may be strict, as shown by the identity map of the Klein bottle considered as a fiber map over $S^1.$

In \cite{G-K-09} was developed a Nielsen theory to study fixed point (or coincidences) of maps over $B.$ The Nielsen number, $N_B(f),$ and the  Reidmeister number, $R_B(f)$, over $B$ were defined.  

\begin{definition}[Nielsen classes over $B$] \label{nielsen-b}
	Let $f: M \to M$ be a fiber map over $B.$ Two points $x, y \in Fix(f)$ are called Nielsen equivalent over $B$ if there exist a path $\lambda: I \to M$ with $\lambda(0) = x$ and $\lambda(1) = y$ and a homotopy $H: I \times I \to M$ such that $H(t,0) = \lambda(t),$ $H(t,1) = f\circ \lambda(t),$ $H(0,s) = x,$ $H(1,s) = y$ and for each $t$ the image $H(\{t\} \times I)$ lies in the fiber $p^{-1}(p(\lambda(t))).$   
\end{definition}

Using the equivalence relation above we can split $Fix(f)$ into disjoint classes called by Nielsen classes over $B.$ To each Nielsen class one associates an index over $B$, as in \cite[Section 5]{G-K-09}. The Nielsen number of $f$ over $B$ is the number of Nielsen classes with nonzero index. A class with nonzero index is called essential.

\begin{definition}
	Let $f: M \to M$ be a fiber map over $B.$ We denote by 
	$$C_B(f) = min\{\#\pi_0(Fix(g))| g \sim_{B} f \} \,\,\,\,\,\,\, and $$
	$$C_BP_n(f) = min\{\# \pi_0(P_n(g))| g \sim_{B} f \}.$$
\end{definition}

\begin{definition}[Index assumption]
	A fiber map $f: M \to M$ over $B$ is said to satisfies the {\it index assumption } if for each $k,r \in \mathbb{N},$  $C$ a fixed point class of $f^{k}$ and $\overline{C}$ a fixed point class of  $f^{kr}$ which contains $C,$ then  $\overline{C}$ essential implies $C$ essential.
\end{definition}

This assumption guarantees that essential Nielsen classes persist under iteration in the way required for periodic point detection. For more details of Nielsen classes of the iterates of a fiber map $f,$ see \cite{S-S-25}.

\begin{proposition} \label{prop-hpers1}
	If $f: M \to M$ is a fiber map over $B$ satisfying the index assumption and 
	$$\displaystyle \sum_{\frac{n}{k} : prime} N_B(f^{k}) < N_B(f^{n})$$ 
	then $n \in H_B Per(f).$ 
\end{proposition}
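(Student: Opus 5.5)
We argue by contradiction. Suppose $n \notin H_BPer(f)$. Then there is a fiber map $g \sim_B f$ with $P_n(g) = \varnothing$, so every point of $Fix(g^n)$ has minimal period a proper divisor of $n$. Because $g \sim_B f$ implies $g^m \sim_B f^m$ for every $m$, and $N_B$ is a fiberwise homotopy invariant by \cite{G-K-09}, we have $N_B(g^m) = N_B(f^m)$ for all $m$. The index assumption is stated in terms of Nielsen classes and essentiality over $B$, so it also transfers to $g$; we will use it in that form, invoking the homotopy invariance from \cite{S-S-25} if needed.

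Next we locate the essential classes of $g^n$. Each essential fixed point class $\overline{C}$ of $g^n$ over $B$ is nonempty, since essential classes cannot be removed. Pick $x \in \overline{C}$ and let $d < n$ be its minimal period. Then $d$ divides $n$, so $d$ divides $k = n/q$ for some prime $q$ dividing $n$. Since $x \in Fix(g^k)$, $x$ lies in some fixed point class $C$ of $g^k$. A Nielsen path over $B$ for $g^k$ is also one for $g^n$: iterate the fiberwise homotopy $H$ from Definition \ref{nielsen-b} $q$ times and concatenate, which keeps each vertical segment inside its fiber because $g$ is a fiber map. Hence $C \subseteq \overline{C}$. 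By the index assumption, $C$ is essential for $g^k$.

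This gives a map $\Phi$ from the essential classes of $g^n$ to the set of pairs $(k, C)$, where $n/k$ is prime and $C$ is an essential class of $g^k$. The map is injective: distinct classes of $g^n$ are disjoint, and the class of $g^n$ containing a given $C$ is unique. Counting both sides gives
$$N_B(f^n) = N_B(g^n) \le \sum_{n/k \text{ prime}} N_B(g^k) = \sum_{n/k \text{ prime}} N_B(f^k),$$
which contradicts the hypothesis. Therefore $n \in H_BPer(f)$.

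The step I expect to be the main obstacle is the containment $C \subseteq \overline{C}$, meaning that classes over $B$ for $g^k$ sit inside classes over $B$ for $g^{kr}$. It requires checking that the fiberwise homotopy connecting $\lambda$ to $g^k \circ \lambda$ can be iterated to one connecting $\lambda$ to $g^{kr} \circ \lambda$ while remaining vertical. I would build it as the concatenation in $s$ of $g^{kj} \circ H$ for $j = 0, \dots, r-1$, using $p \circ g = p$. The invariance of $N_B$ and of the index assumption under $\sim_B$ I would cite from \cite{G-K-09,S-S-25}.
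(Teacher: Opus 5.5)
Your proof is correct and follows the same route as the paper. You argue by contradiction with a representative $g\sim_B f$ having $P_n(g)=\varnothing$, assign each essential class of $g^n$ to an essential class of some $g^{n/q}$ via the index assumption, and count using the fiberwise homotopy invariance of $N_B$. You also supply details the paper leaves implicit: the containment $C\subseteq\overline{C}$ by iterating the vertical homotopy, injectivity of the assignment, and the correct level $n/q$ where the paper writes ``$g^q$''. Like the paper, you apply the index assumption to $g$ rather than to $f$, which tacitly relies on its invariance under $\sim_B$.
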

\begin{proof}
Suppose, by contradiction, that $ n\notin H_BPer(f).$ Then there exists a fiber map $g \sim_B f$ such that $n\notin Per(g).$ Consequently, every periodic point of $g$ fixed by $g^n$ has minimal period a proper divisor of $n.$

Let $C$ be an essential Nielsen class of $g^n.$ Since $C$ contains a periodic point whose minimal period is a proper divisor of $n,$ there exists a prime divisor $q$ of $n$ such that the corresponding periodic point belongs to a fixed point class of $g^q.$ By the index assumption, the fixed point class of $g^n$ containing this class is essential only if the corresponding class at level $q$ is essential. Thus every essential Nielsen class of $g^n$ is accounted for by an essential Nielsen class of $g^q,$ for some prime divisor $q$ of $n.$ Therefore
$$\displaystyle \sum_{\frac{n}{k} : prime} N_B(g^{k}) \geq N_B(g^{n}).$$
Since Nielsen numbers are invariant under fiberwise homotopy, we obtain
$$\displaystyle \sum_{\frac{n}{k} : prime} N_B(f^{k}) \geq N_B(f^{n})$$
contradicting the hypothesis. Hence $ n\in H_BPer(f).$
\end{proof}

From the above proposition we obtain;

\begin{lemma} \label{lemma-hpers1}
	Let $f: M \to M$ a map over $B$ satisfying the index assumption. If  $N_B(f^{n+1})/N_B(f^{n})$ is well defined and greater than or equal to 2 for all $n \geq 1,$ then $H_B Per(f) = \mathbb{N}.$   
\end{lemma}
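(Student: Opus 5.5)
The plan is to verify the hypothesis of Proposition \ref{prop-hpers1} for every $n\in\mathbb{N}$. The case $n=1$ needs separate treatment. There the sum over $k$ with $n/k$ prime is empty, so the inequality reads $0<N_B(f)$. This holds because the ratio $N_B(f^{2})/N_B(f)$ is assumed well defined, which forces $N_B(f)\neq 0$ and hence $N_B(f)\geq 1$. Proposition \ref{prop-hpers1} then gives $1\in H_BPer(f)$.

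For $n\geq 2$, set $a_m=N_B(f^m)$. By hypothesis $a_m\geq 1$ and $a_{m+1}\geq 2a_m$ for all $m$, so iterating gives $a_n\geq 2^{\,n-k}a_k$ whenever $k\leq n$. In particular the sequence is strictly increasing. Now write $n=q_1^{e_1}\cdots q_t^{e_t}$ with distinct primes $q_i$. The indices appearing in the sum are exactly $k=n/q_i$, so
$$\sum_{\frac{n}{k}:\ prime} N_B(f^k)=\sum_{i=1}^{t} a_{n/q_i}\leq \sum_{i=1}^{t} 2^{-(n-n/q_i)}\,a_n .$$

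The remaining task is to show $\sum_{i=1}^t 2^{-(n-n/q_i)}<1$, which I expect to be the only real obstacle. Since $q_i\geq 2$, we have $n-n/q_i\geq n/2$. Moreover the largest proper divisor $n/q_1$, where $q_1$ is the smallest prime factor of $n$, dominates, so each term is at most $a_{n/q_1}$. I would bound the sum crudely by $t\,a_{n/q_1}\leq t\,2^{-n/2}a_n$. The number of distinct prime factors satisfies $t\leq \log_2 n$, and $\log_2 n<2^{n/2}$ for all $n\geq 1$: both sides equal $1$ at $n=2$, and the left side grows more slowly. This gives a strict inequality for $n\geq 2$.

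The point where equality could sneak in is $n$ prime. There $t=1$ and the sum is $a_1\leq a_n/2^{n-1}<a_n$ for $n\geq 2$. So the bound is strict in every case. Hence, by Proposition \ref{prop-hpers1}, every $n\in\mathbb{N}$ lies in $H_BPer(f)$, so $H_BPer(f)=\mathbb{N}$.
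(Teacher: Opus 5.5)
Your proof is correct and follows the paper's route: you feed the doubling hypothesis $N_B(f^{m+1})\ge 2N_B(f^m)$ into Proposition~\ref{prop-hpers1}, differing only in how the prime-divisor sum is dominated (you bound each of its $t\le\log_2 n$ terms by $2^{-n/2}N_B(f^n)$, whereas the paper dominates it by $\sum_{k=1}^{n-1}N_B(f^k)<N_B(f^n)$ via a geometric series), and you also handle $n=1$ explicitly, which the paper leaves implicit. One harmless slip: at $n=2$ you have $\log_2 2=1<2=2^{2/2}$, so the two sides are not equal there (if they were, your strict inequality would fail at $n=2$), but the bound $t<2^{n/2}$ that you actually need holds for every $n\ge 2$.
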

\begin{proof}
The hypothesis gives $ N_B(f^n)\geq 2N_B(f^{n-1}) $ for every $n\geq2.$ Hence, for $1\leq k\leq n-1,$
$$
N_B(f^k) \leq \frac{N_B(f^{n-1})}{2^{n-1-k}}.
$$
Every prime divisor $q$ of $n$ satisfies $q\leq n/2.$ Therefore
$$ \displaystyle\sum_{\substack{\frac{n}{q} : prime}} N_B(f^q) \leq \sum_{k=1}^{n-1}N_B(f^k) \leq N_B(f^{n-1}) \sum_{j=1}^{n-1}\frac1{2^j}
< N_B(f^{n-1}).$$
Since $ N_B(f^n)\geq2N_B(f^{n-1}),$ we obtain
$ \displaystyle \sum_{\substack{\frac{n}{q} : prime}} N_B(f^q) <N_B(f^n).$
Proposition \ref{prop-hpers1} therefore implies $ n\in H_BPer(f).$ Since $n$ is arbitrary,
$ H_BPer(f)=\mathbb N.$
\end{proof}

\begin{definition}
	Let $f: M \to M$ be a fiber map over $B.$  We define by induction; 
	$$A_{1}(f)=N_B(f) \ \ and \ \ A_{n}(f) = N_B(f^{n}) - \sum_{k|n, \ k<n} A_{k}(f).$$ 	
\end{definition}

The next result follows from \cite[Theorem 5.4]{S-S-25}.
\begin{theorem} \label{theorem-An}
	Let $f: M \to M$ be a fiber map over $B$ and $n \in \mathbb{N}.$ Let $n = p_1^{\alpha_1}p_2^{\alpha_2}\cdots p_t^{\alpha_t}$ be its prime factorization, where $t \geq 1,$ every $p_i$ prime and every $\alpha_i \geq 1.$ Then
	$$ A_n(f) = \displaystyle \sum_{\substack{\alpha_{j}-1 \leq k_j \leq \alpha_j \\ 1  \leq j \leq t}}
	(-1)^{(\alpha_1+\alpha_2+\cdots+\alpha_t)-(k_1+k_2+\cdots+k_t)} N_B(f^{p_1^{k_1}p_2^{k_2}\cdots p_t^{k_t}}).$$	
\end{theorem}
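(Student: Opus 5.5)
The formula is the Möbius-inversion solution of the recursion defining $A_n(f)$. The statement is purely combinatorial: it uses no property of $N_B$ beyond its being a function of $n$. So I will prove it for an arbitrary sequence $a_n := N_B(f^n)$, with $A_1=a_1$ and $A_n = a_n - \sum_{k\mid n,\,k<n} A_k$.

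First, rewrite the recursion as
$$a_n = \sum_{k\mid n} A_k \qquad \text{for all } n\ge 1.$$
By Möbius inversion this gives
$$A_n = \sum_{d\mid n} \mu(n/d)\, a_d.$$
Both facts are standard. Uniqueness comes from induction on $n$: the displayed identity determines $A_n$ from the $A_k$ with $k<n$. The inversion itself I would justify via $\sum_{e\mid m}\mu(e)=[m=1]$.

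Second, I evaluate the Möbius sum using the factorization $n=p_1^{\alpha_1}\cdots p_t^{\alpha_t}$. Since $\mu(n/d)\neq 0$ only when $n/d$ is squarefree, the only divisors that contribute are $d=p_1^{k_1}\cdots p_t^{k_t}$ with $\alpha_j-1\le k_j\le\alpha_j$ for each $j$. For such $d$ we have $n/d=\prod_j p_j^{\alpha_j-k_j}$. This is a product of $\sum_j(\alpha_j-k_j)$ distinct primes, so
$$\mu(n/d)=(-1)^{(\alpha_1+\cdots+\alpha_t)-(k_1+\cdots+k_t)}.$$
Substituting $a_d=N_B(f^{d})$ gives exactly the formula of Theorem~\ref{theorem-An}.

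The only mildly delicate point is the bookkeeping that the sum over $2^t$ tuples $(k_1,\dots,k_t)$ is in bijection with the squarefree cofactors $n/d$. Each squarefree divisor $e$ of $n$ corresponds to the subset of primes $p_j$ dividing $e$, and these are exactly the $j$ with $k_j=\alpha_j-1$. Beyond this I expect no obstacle. Alternatively, one can simply cite \cite[Theorem 5.4]{S-S-25}, which is where the result is taken from.
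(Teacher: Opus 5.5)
Your proof is correct and complete. The paper gives no argument of its own here; it only says the result ``follows from \cite[Theorem 5.4]{S-S-25}''. Your M\"obius-inversion derivation is therefore a self-contained substitute for that citation, not a variant of a proof in the paper.

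The steps all check out:
\begin{itemize}
\item The definition gives $N_B(f^n)=\sum_{k\mid n}A_k(f)$.
\item Inversion then gives $A_n(f)=\sum_{d\mid n}\mu(n/d)\,N_B(f^d)$.
\item The only nonzero terms come from the $2^t$ divisors $d=p_1^{k_1}\cdots p_t^{k_t}$ with $\alpha_j-1\le k_j\le\alpha_j$. For these, $\mu(n/d)=(-1)^{\sum_j(\alpha_j-k_j)}$, which is exactly the stated formula.
\end{itemize}

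What your route adds is transparency about hypotheses. It shows the identity is purely formal: it holds for any integer sequence in place of $N_B(f^n)$. So nothing about $f$ is used, neither toral structure, nor the index assumption, nor $N_B=R_B$. That justifies stating the theorem for an arbitrary fiber map, whereas the bare citation leaves the reader to check that the cited result applies in that generality.

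One small remark: the uniqueness comment in your first step is not needed. The inversion direction only uses $\sum_{e\mid m}\mu(e)=[m=1]$ after substituting $N_B(f^d)=\sum_{k\mid d}A_k(f)$. It does no harm, though.
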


In \cite{S-S-25} was define a Nielsen type periodic number denoted by $N_BP_n(f)$ satisfying $N_BP_n(f) \leq C_BP_n(f).$ If $N_BP_n(f) > 0$ then $n \in H_{B} Per(f).$  

\begin{theorem} \cite[Theorem 5.3]{S-S-25} \label{th-nbpn}
	If $f: M \to M$ is an $n$-toral fiber map over $B$ such that for every $m | n;$ $0 \neq N_B(f^{m}) = R_B(f^{m})$ then $$A_n(f) = N_BP_n(f).$$ In this case if $A_n(f) \neq 0$ then $n \in H_{B} Per(f).$
\end{theorem}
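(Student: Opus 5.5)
The plan is to read the theorem as a Möbius-inversion statement about Reidemeister classes over $B$ of the iterates of $f$. I would proceed in four steps.

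First, I fix the combinatorial model. For each $m\mid n$ let $\mathcal R_B(f^m)$ be the set of Reidemeister classes over $B$ of $f^m$, as in \cite{G-K-09}. For $k\mid m$ there is a boosting map $\gamma_{k,m}:\mathcal R_B(f^k)\to\mathcal R_B(f^m)$, which sends the class over $B$ of a fixed point of $f^k$ to its class as a fixed point of $f^m$. A class of $f^m$ is \emph{reducible} if it lies in the image of some $\gamma_{k,m}$ with $k<m$, and \emph{irreducible} otherwise. By definition (following \cite{S-S-25}), $N_BP_n(f)$ counts the periodic points carried by the essential irreducible classes of $f^n$. This number is a lower bound for $C_BP_n(f)$.

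Second, I use the hypothesis $0\neq N_B(f^m)=R_B(f^m)$ for all $m\mid n$. It says that every Reidemeister class over $B$ of every $f^m$ with $m\mid n$ is essential. So $N_BP_n(f)$ is simply the number of irreducible classes of $f^n$, with no essentiality to check.

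Third, I use $n$-torality, which is the heart of the argument. It should give that each $\mathcal R_B(f^m)$ is an abelian group and that the boosting maps $\gamma_{k,m}$ are injective homomorphisms. It should also give that the images of the various levels intersect exactly as the divisor lattice predicts:
\[
\gamma_{k,n}(\mathcal R_B(f^k))\cap\gamma_{l,n}(\mathcal R_B(f^l))=\gamma_{\gcd(k,l),n}(\mathcal R_B(f^{\gcd(k,l)})).
\]
These properties come from the torus-like structure of the fiberwise fundamental groupoid, where the Reidemeister set is a quotient $\operatorname{coker}(I-F^m)$ and boosting is multiplication by $I+F^k+\cdots+F^{m-k}$. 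Granting them, every class of $f^n$ has a unique minimal level $k\mid n$ at which it is irreducible. Hence
\[
R_B(f^n)=\sum_{k\mid n}\#\{\text{irreducible classes of }f^k\}.
\]
Comparing with the recursive definition of $A_n(f)$, induction on the divisors of $n$ gives $A_k(f)=\#\{\text{irreducible classes of }f^k\}$ for all $k\mid n$. Combined with the second step, this yields $A_n(f)=N_BP_n(f)$. Equivalently, one can phrase this as the inclusion–exclusion formula of Theorem~\ref{theorem-An}.

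Fourth, I deduce the periodic-point statement. If $A_n(f)\neq0$, then $C_BP_n(g)\ge N_BP_n(f)>0$ for every $g\sim_B f$, since $N_BP_n$ is a fiberwise homotopy invariant bounded above by $C_BP_n$. Hence every such $g$ has a point of minimal period $n$, so $n\in H_BPer(f)$.

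The main obstacle is the third step: checking that $n$-torality really forces injectivity of boosting and the gcd-intersection property for Reidemeister classes \emph{over $B$}. The fiberwise Reidemeister relation is finer than the ordinary one. My first attempt would be to reduce to the fiber: restrict to $\pi_1$ of the fiber twisted along the base, and verify both properties on the explicit abelian model $\operatorname{coker}(I-F^m)$.
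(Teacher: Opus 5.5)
The paper does not prove this statement; it quotes \cite[Theorem 5.3]{S-S-25}, with $n$-torality taken from \cite[Definition 5.1]{S-S-25}. Your proposal is therefore a reconstruction of the cited result, not a variant of an argument in the paper. As a reconstruction it follows the standard route of Jiang's theorem for $n$-toral maps, and the counting is correct. Write $I_d$ for the number of irreducible classes of $f^d$. Injective boosting together with the gcd-intersection property gives every class of $f^k$, $k\mid n$, a unique depth $d\mid k$ and a unique irreducible preimage at that depth, so $R_B(f^k)=\sum_{d\mid k}I_d$. Since $N_B(f^k)=R_B(f^k)$ for all $k\mid n$, the recursion defining $A_k(f)$ forces $A_k(f)=I_k$. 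Because every class is essential, $I_n=N_BP_n(f)$. You read $N_BP_n(f)$ as a number of essential irreducible classes, not $n$ times a number of orbits as for Jiang's $NP_n$. That is the reading consistent with the paper: for the torus map $f_{1,1}$ one has $A_2=1$. So no orbit-length condition enters.

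What you should revise is your diagnosis of Step 3. Injectivity of the $\gamma_{k,m}$ and the intersection property are not consequences to be extracted from a torus-like structure of $\pi_1$. The notion of an $n$-toral map is Jiang's, and it is formulated as conditions of exactly this kind on the boosting functions. The step you call the main obstacle is the hypothesis itself.

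Your fallback would also fail at this level of generality. For an arbitrary bundle $Y\to M\to B$ there is no abelian model $\operatorname{coker}(I-F^m)$. Already for $\mathbb K$, the Reidemeister sets over $S^1$ in Theorem~\ref{nielsen-number-s1} have roughly $|r-1|/2$ elements, not $|r-1|$. Explicit computations of that kind are how one \emph{verifies} $n$-torality for particular maps, as for the $f_{r,s}$ on $\mathbb T$ in Proposition~\ref{prop-hperf}; they are not how one proves the general theorem. No group structure on $\mathcal R_B(f^m)$ is needed either.

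One small point: the induction uses the decomposition at every level $k\mid n$. If you only have the intersection property at level $n$, push it down using injectivity of $\gamma_{k,n}$ and $\gamma_{d,n}=\gamma_{k,n}\circ\gamma_{d,k}$.
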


For the definition of a $n$-toral map over $B,$ see \cite[Definition 5.1]{S-S-25}.

\begin{remark}
A fibration $Y \to M \stackrel{p}{\to} S^{1}$ is also a fiber bundle because the base $S^{1}$ is a compact space. Up to isomorphism of fiber bundles, there are only two fiber bundles with base and the fiber $S^{1},$ $S^{1} \to M \stackrel{p}{\to} S^{1},$ where $M$ is the torus or the Klein bottle. For more details see \cite{G-K-09}. From now on we will focus in these two fiber bundles. 
\end{remark}

The next result is an adaptation of \cite[Theorem 1.3]{G-K-09} for the fixed point case.

\begin{theorem}\cite[Theorem 1.3]{G-K-09} \label{nielsen-number-s1}

\begin{enumerate}[(1)]

\item Suppose $M = \mathbb{T}.$ Given $f: \mathbb{T} \to \mathbb{T}$ a fiber map over $S^{1}$ then $f$ is  homotopic, over $S^{1},$ to the map  $f_{r,s}$ defined by $f_{r,s}(<x,y>) = <x^{r}y^{s},y>$ for some $r,s \in \mathbb{Z}.$  We have;
$$ C_{S^{1}}(f) = N_{S^{1}}(f) = R_{S^{1}}(f) = gcd\{r-1, s\} \,\, if \,\, (r,s) \neq (1,0). $$
$$ 0 = C_{S^{1}}(f) = N_{S^{1}}(f) \neq R_{S^{1}}(f) = \infty  \,\, if \,\, (r,s) = (1,0). $$

\item Suppose $M = \mathbb{K}.$ Given $f: \mathbb{K} \to \mathbb{K}$ a fiber map over $S^{1}$ then $f$ is homotopic, over $S^{1},$ to the map  $f_{r,s}(<z,t>) = <z^{r}[t]^{s},t>$ for some $r \in \mathbb{Z},$ where $[t]$ denotes the image of $t$ under the natural projection $[0,1] \to S^{1} = \dfrac{[0,1]}{0 \sim 1},$ and $s \in \{0,1\}.$ 	
$$\textrm{For} \,\,\, r \neq 1 \,\,\, \textrm{we have;} \,\,\,\,\,\,  C_{S^{1}}(f) = N_{S^{1}}(f) = R_{S^{1}}(f) = \left\{
\begin{array}{lll}
	\frac{|r-1|}{2} & if &\hbox{$r$ is odd and $s = 1$} \\	
	\left[\frac{|r-1|}{2} \right] + 1 &  & else \\	
\end{array}	\right. $$	
$$  \textrm{For} \,\,\, r = 1 \,\,\, \textrm{we have;} \,\,\,\,\,\,\, \infty =  R_{S^{1}}(f) \neq C_{S^{1}}(f) = N_{S^{1}}(f) =   \left\{
\begin{array}{lll}
	0 & if & s = 1  \\	
	1 & if & s = 0 \\	
\end{array}	\right. $$	
	
\end{enumerate}

\end{theorem}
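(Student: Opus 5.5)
We would prove the statement by reducing every fiber map to an explicit normal form, reading off its fixed point set, and then matching that set against the Nielsen and Reidemeister numbers over $S^1$ of \cite{G-K-09}. The two parts are handled by the same three steps: classify fiberwise homotopy classes, count components of $Fix(f_{r,s})$, and compute the indices over $B$.

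\textbf{Normal forms.} For the torus, a fiber map over $S^1$ has the form $f(\langle x,y\rangle)=\langle \varphi(x,y),y\rangle$ with $\varphi:T^2\to S^1$. A homotopy of $\varphi$ gives a homotopy of $f$ over $S^1$. The homotopy class of $\varphi$ is determined by its bidegree $(r,s)\in H^1(T^2)=\mathbb{Z}^2$, so a straight-line homotopy of lifts to $\mathbb{R}$ deforms $\varphi$ to $x^ry^s$, giving $f\sim_{S^1} f_{r,s}$.

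For the Klein bottle, write $K=[0,1]\times S^1/(0,z)\sim(1,\bar z)$. A fiber map is a path $t\mapsto \varphi_t$ of self-maps of $S^1$, all of a common degree $r$, satisfying $\varphi_1(\bar z)=\overline{\varphi_0(z)}$. Fixing $\varphi_0=z^r$, the remaining freedom is a loop in the degree-$r$ component of $Map(S^1,S^1)$, whose $\pi_1$ is $\mathbb{Z}$. The gluing acts on this $\mathbb{Z}$ by $-1$, so the loop is only determined modulo $2$. This yields $f\sim_{S^1}f_{r,s}$ with $s\in\{0,1\}$.

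\textbf{Fixed points and lower bounds.} For $f_{r,s}$ on $T^2$, $Fix$ is the curve $x^{r-1}y^s=1$. When $r\neq1$ it is a union of $\gcd(r-1,s)$ circles. In each fiber, $|r-1|$ points are permuted cyclically as $y$ goes once around; these are permuted by the shift $x\mapsto x\,e^{2\pi i s/(r-1)}$, whose orbits number $\gcd(r-1,s)$.

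For $K$, the fixed points of $f_{r,s}$ are $z=e^{2\pi i(k-st)/(r-1)}$ over $t\in[0,1]$. The gluing $z\mapsto\bar z$ identifies the arc labelled $k$ with the arc labelled $-k-s$ modulo $r-1$. Counting orbits of this involution on $\mathbb{Z}/(r-1)$ gives $\frac{|r-1|}{2}$ when $r$ is odd and $s=1$ (no fixed labels), and $[\frac{|r-1|}{2}]+1$ otherwise.

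Next we identify these components with the fiberwise Reidemeister classes. We compute $R_{S^1}$ as the number of twisted conjugacy classes of $f_\#$ restricted to $\pi_1$ of the fiber, twisted by the monodromy. For the torus this is $\mathbb{Z}/(r-1)$ modulo the $s$-shift. For $K$ it is the same quotient modulo the additional involution. In both cases $R_{S^1}$ equals the component count, and the count is infinite exactly when $r=1$.

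Each component is a transverse fiberwise fixed circle. Its index over $B$ is the local fiber index $\mathrm{sign}(1-r)\neq 0$ carried along the circle. Hence all classes are essential, $N=R$, and $C\ge N$ together with the realization by $f_{r,s}$ gives $C=N=R$.

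\textbf{The case $r=1$ (main obstacle).}
\begin{itemize}
\item On $T^2$ with $(1,0)$, the identity is fiberwise homotopic to the rotation $x\mapsto e^{i\varepsilon}x$, which is fixed point free. Hence $C=N=0$.
\item On $K$ with $s=1$, the map $z\mapsto z\,e^{2\pi i t}$ can be further rotated by a constant angle: $z\mapsto z\,e^{2\pi i(t+c)}$ with $c\notin\mathbb{Z}$. We must check that this rotation respects the gluing, and then that the result is fixed point free. This gives $C=N=0$.
\item On $K$ with $s=0$, we must show that $C=N=1$, i.e.\ that the single class $Fix(Id)=K$ is essential.
\end{itemize}
The last point is where we expect difficulty. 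The index over $B$ of \cite{G-K-09} lives in a bordism group of the base, not in $\mathbb{Z}$. We would try to show the obstruction is the nontrivial class given by a fiberwise Euler-type invariant coming from the orientation-reversing monodromy. Concretely, we would perturb the identity of $K$ to a map whose fixed set is a single circle section with twisted normal framing, and check that the framed bordism class of this circle over $S^1$ is nonzero. This is the step we expect to rely most directly on the machinery of \cite[Section 5]{G-K-09}.
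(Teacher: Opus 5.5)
The paper gives no proof of this statement. It is quoted from \cite[Theorem 1.3]{G-K-09}, with the normal forms taken from \cite{G-L-V-V}, so you are reconstructing a cited result. For $r\neq 1$ your outline is reasonable, provided you make precise why ``the local index carried along the circle'' gives a nonzero index over $B$, since that index is bordism-valued. The clean reason is that each component is a $d$-fold cover of $B$ with $d\neq 0$ and constant local sign, so its class pushes forward to $\pm d[B]\neq 0$.

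The genuine gaps are all at $r=1$. First, the torus maps $f_{1,s}$ with $s\neq 0$ are missing entirely. Your assertion that the count is infinite exactly when $r=1$ is false on $\mathbb{T}$: there the statement gives $C=N=R=\gcd\{0,s\}=|s|$, and the paper relies on this case for $f_{1,s}$. Here $Fix(f_{1,s})$ is the union of the $|s|$ whole fibers over $y^s=1$, on which $f$ is the identity. Your essentiality argument therefore produces $\mathrm{sign}(1-r)=0$ and says nothing. After perturbation each class is a $1$-cycle homologous to a fiber, of degree $0$ over $B$, so it must be detected in the fiber direction. For instance, for $g\sim_{S^1}f_{1,s}$ write $g(x,y)=(G(x,y),y)$ and lift $G\bar x$ along $y=e^{2\pi i u}$ to $\Phi\colon S^1\times\mathbb{R}\to\mathbb{R}$ with $\Phi(x,u+1)=\Phi(x,u)+s$. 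The level sets $\Phi^{-1}(k)$, $k \bmod s$, are nonempty and project to $|s|$ pairwise disjoint open-closed pieces of $Fix(g)$.

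Second, your fixed point free representative for $\mathbb{K}$ with $(r,s)=(1,1)$ fails both checks you list. The map $\langle z,t\rangle\mapsto\langle z e^{2\pi i(t+c)},t\rangle$ respects $(z,0)\sim(\bar z,1)$ only if $e^{4\pi i c}=1$. For $c=\tfrac12$ the rotation angle passes through $0$ at $t=\tfrac12$, so that whole fiber is fixed; this map in fact lies in the $s=0$ class. The right representative, which the paper takes from \cite[Proposition 3.8]{G-L-V-V}, is the constant rotation $\langle z,t\rangle\mapsto\langle -z,t\rangle$. It is joined to $f_{1,1}$ over $S^1$ by the rotations through angle $2\pi((1-u)t+u/2)$, which satisfy the gluing condition for every $u$.

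Third, for $(r,s)=(1,0)$ on $\mathbb{K}$ you leave $N=1$ open, and the configuration you plan to perturb to cannot exist. Suppose every $\varphi_t$ has degree one and a single fixed point $\zeta_t$. Let $a_t$ be the real lift of $\varphi_t(z)\bar z$ with $a_t(\zeta_t)=0$. Then $a_t$ has constant sign on the rest of the fiber, that sign is locally constant in $t$, and the gluing $\varphi_1(\bar z)=\overline{\varphi_0(z)}$ forces $a_1(w)=-a_0(\bar w)$, which reverses the sign. So no single fixed section is possible. The natural perturbation $z\mapsto z e^{i\varepsilon\,\mathrm{Im}\,z}$, which commutes with conjugation, instead has two fixed sections $z=\pm1$ with opposite local indices, lying in one Nielsen class over $B$. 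On $\mathbb{T}$ these cancel, whereas on $\mathbb{K}$ their homology classes differ by the order-two fiber class in $H_1(\mathbb{K})$. Your instinct that the obstruction comes from the orientation-reversing monodromy is therefore right. However, it is invisible to any integer signed degree over $B$ and cannot be carried by a single section with twisted framing. This is exactly where the bordism-valued index of \cite{G-K-09} has to be used, which your proposal does not supply.
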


\begin{corollary} \label{corol-nielsen-s1}
If $f_{r,s}: \mathbb{K} \to \mathbb{K}$ is the fiber map over $S^{1}$ defined in Theorem \ref{nielsen-number-s1}, item (2), then $N_{S^{1}}(f_{r,s})$ is given by:
	
\begin{enumerate}[(i)]
		
\item \label{thn-2-1} $\dfrac{|r-1|}{2} + 1,$ if $r$ is odd, $r \neq 1$ and $s=0.$ 
		
\item \label{thn-2-2} $\dfrac{|r-1|}{2},$ if $r$ is odd, $r \neq 1$ and $s=1.$

\item \label{thn-2-3} $\dfrac{|r-1|+1}{2},$ if $r$ is even.

\item \label{thn-2-4} $1,$ if $r=1$ and $s=0.$

\item \label{thn-2-5} $0,$ if $r=1$ and $s=1.$
		
\end{enumerate}	
\end{corollary}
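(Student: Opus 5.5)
This corollary is a direct rewriting of item (2) of Theorem \ref{nielsen-number-s1}: for each of the five listed cases, substitute into the formula given there and simplify the floor expression $\left[\frac{|r-1|}{2}\right]+1$ according to the parity of $r$. The proof is a case split on the parity of $r$, on whether $r=1$, and on the value of $s$.

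First, the cases with $r=1$, items (\ref{thn-2-4}) and (\ref{thn-2-5}). Here the second display of Theorem \ref{nielsen-number-s1}(2) applies. It states that $N_{S^1}(f)=1$ when $s=0$ and $N_{S^1}(f)=0$ when $s=1$, so both items are immediate.

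Next, take $r\neq 1$ odd. Then $r-1$ is even, so $|r-1|/2$ is an integer and $\left[\frac{|r-1|}{2}\right]=\frac{|r-1|}{2}$.
\begin{itemize}
\item If $s=1$, we are in the first branch of the first display, which gives $N_{S^1}(f_{r,s})=\frac{|r-1|}{2}$. This is item (\ref{thn-2-2}).
\item If $s=0$, we are in the ``else'' branch, which gives $\left[\frac{|r-1|}{2}\right]+1=\frac{|r-1|}{2}+1$. This is item (\ref{thn-2-1}).
\end{itemize}

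Finally, take $r$ even. Then $r\neq 1$ automatically, and we are always in the ``else'' branch, whatever the value of $s$. Since $|r-1|$ is odd, write $|r-1|=2m+1$. Then $\left[\frac{|r-1|}{2}\right]=m$, so
\[
N_{S^1}(f_{r,s}) = m+1 = \frac{|r-1|+1}{2},
\]
which is item (\ref{thn-2-3}).

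The only step needing any care is this last floor computation, which uses that $|r-1|$ is odd. Everything else is substitution into Theorem \ref{nielsen-number-s1}.
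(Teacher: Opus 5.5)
Your proposal is correct and is the same argument as the paper's. Both read the values off Theorem \ref{nielsen-number-s1}(2) and simplify $\left[\frac{|r-1|}{2}\right]+1$ according to the parity of $r$, which is exactly the single observation the paper's proof records.
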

\begin{proof}
Just observe that $$ \left[\frac{|r-1|}{2} \right] + 1 = \left\{
	\begin{array}{lll}
		\frac{|r-1|}{2} + 1 & if & \hbox{$r$ is odd} \\	
		\frac{|r-1|+1}{2}  & if  & \hbox{$r$ is even} \\	
	\end{array}	\right.
	$$
\end{proof}

%%%%%%%%%%%%%%%%%%%%%%%%%%%%%%%%%%%%%%%%%%%%%%%%%%%%%%%%%%%
%%%%%%%%%%%%%%%%%%%%%%%%%%%%%%%%%%%%%%%%%%%%%%%%%

\section{The torus case}

%%%%%%%%%%%%%%%%%%%%%%%%%%%%%%%%%%%

In this section we will proof the Theorem \ref{main-result}, that is, we will give a description for $H_{S^{1}}Per(f)$ for each fiber map $f: \mathbb{T} \to \mathbb{T}$ over $S^{1}.$ In this section we will use the notation $<x,y>  = (x,y)$ for a point in $\mathbb{T}.$

We consider $S^{1}$ as a subset of the complex numbers and $\mathbb{T}$ the trivial fiber bundle $S^{1} \times S^{1}.$ Given a fiber map $f: \mathbb{T} \to \mathbb{T}$ over $S^{1}$ we can deform $f$ to the map $f_{r,s}: \mathbb{T} \to \mathbb{T} $ defined by $ f_{r,s}(x,y) = (x^{r}y^{s},y)$
for some $r,s \in \mathbb{Z},$ see \cite[Section 3]{G-L-V-V}.

\begin{lemma} \cite[Theorem 1.3, item (i)]{G-K-09}
The map $f_{r,s}$ can be deformed over $S^{1}$ to a fixed point free map if and only if $r=1$ and $s=0.$ Therefore, $H_{S^{1}}Per(Id) = \emptyset.$	
\end{lemma}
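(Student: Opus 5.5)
The lemma has two parts: the equivalence itself and the consequence for the identity. The equivalence is read off from item (1) of Theorem \ref{nielsen-number-s1}, which is the cited result of \cite{G-K-09}.

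First suppose $(r,s)\neq(1,0)$. Then $C_{S^1}(f_{r,s})=N_{S^1}(f_{r,s})=\gcd\{r-1,s\}$. Since $(r-1,s)\neq(0,0)$, this gcd is a positive integer, at least $1$. So every map fiberwise homotopic to $f_{r,s}$ has a fixed point, and no fixed point free deformation over $S^1$ exists.

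Conversely, for $(r,s)=(1,0)$ we have $f_{1,0}=Id$. Item (1) already gives $C_{S^1}(f_{1,0})=0$, but I would also exhibit the deformation explicitly. Take the homotopy $H_t(x,y)=(e^{2\pi i t\theta}x,\,y)$ for a fixed irrational $\theta$ and $t\in[0,1]$.
\begin{itemize}
\item Each $H_t$ preserves the second coordinate, so $H_t$ is a fiber map over $S^1$.
\item The endpoint $g=H_1$ is the rotation $(x,y)\mapsto(e^{2\pi i\theta}x,y)$ in each fiber.
\item $g$ is fixed point free, since $e^{2\pi i\theta}\neq1$.
\end{itemize}
This proves the equivalence.

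For $H_{S^1}Per(Id)=\varnothing$, I use the same $g$ rather than the fixed-point statement alone. We have $g^n(x,y)=(e^{2\pi i n\theta}x,y)$, and $e^{2\pi i n\theta}\neq1$ for every $n\geq1$ because $\theta$ is irrational. So $g$ has no periodic points at all, and $Per(g)=\varnothing$. Since $g\sim_{S^1}Id$, we get $H_{S^1}Per(Id)\subseteq Per(g)=\varnothing$.

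The only step needing care is this last one. A fixed point free map in the class is not enough by itself; a rational rotation would still have periodic points. The irrational rotation removes all periods simultaneously. The remaining ingredients are a direct citation of Theorem \ref{nielsen-number-s1}.
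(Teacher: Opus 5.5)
Your proof is correct and is essentially the paper's. The lemma there is only a citation of the same result, item (1) of Theorem \ref{nielsen-number-s1}, and your explicit fiberwise irrational rotation just makes the fixed point free deformation of $Id$ concrete. One correction to your closing remark: the irrational rotation is not needed, since $Id$ lies in its own fiberwise class with $Per(Id)=\{1\}$, so any fixed point free $g\sim_{S^1}Id$ already gives $H_{S^1}Per(Id)\subseteq Per(Id)\cap Per(g)=\varnothing$, which is presumably the paper's implicit ``therefore''.
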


%\begin{proposition} \label{thn-1}
%Suppose $(r,s) \neq (1,0).$ If $f_{r,s}: T \to T$ is the fiber-preserving %map defined above then;
%
%\begin{enumerate}[(i)]
%
%\item \label{thn-i-1} $N_B(f_{r,s}) = gcd\{r-1, s \}.$
%
%\item \label{thn-i-2} $Fix(f_{r,s})$ is composed by exactly $gcd\{r-1,s\}$ %disjoint circles.
%\end{enumerate}	
%\end{proposition}
%\begin{proof}
%The item \eqref{thn-i-1} follows from \cite[Theorem 1.3, item (i)]{G-K-09}. %For item \eqref{thn-i-2}, we have that from \cite[Theorem 1.3, item %(i)]{G-K-09} that the minimum path components in $Fix(f_{r,s})$ is exactly %$gcd\{r-1,s\}.$ Is not difficult to see that each path component is %composed by a disjoint circle.  
%\end{proof}

From now on we will suppose $(r,s) \neq (1,0).$ In this situation  $Fix(f_{r,s})$ is composed by exactly $gcd\{r-1,s\} = N_{S^{1}}(f_{r,s})$ disjoint circles, see \cite[Proposition 3.4]{G-L-V-V}. Because that we call $f_{r,s}$ by {\it minimal} map.

\begin{notation} \label{notation-1}
For simplification reasons we will use in this work the following notation; $$\sigma(n,r) = \displaystyle{\sum_{i=0}^{n-1}r^{i}}$$ for each $n \in \mathbb{N}$ and $r \in \mathbb{Z}.$  
\end{notation}

\begin{proposition} \label{prop-nielsen-fn}
	
If $f_{r,s}: \mathbb{T} \to \mathbb{T}$ is given by  $f_{r,s}(x,y)=(x^{r}y^{s},y)$ then

\begin{enumerate}[(1)]
	
\item $f^{n}_{r,s}$ is given by $f^{n}_{r,s}(x,y)=\Big(x^{r^{n}}.y^{s.\sigma(n,r)},y\Big)$ for each $n\in \n.$ 

\item $N_{S^{1}}(f^{n}_{r,s}) = |\sigma(r,n)|N_{S^{1}}(f_{r,s}) \neq 0$   if   $(r,s)\neq(1,0)$ and $(r,s,n)\neq(-1,0,2k)$, $k \in \mathbb{N}.$

\item If $(r,s) = (1,0)$ or $(r,s,n) = (-1,0,2k)$ then $N_{S^{1}}(f^{n}_{r,s}) = 0.$

\end{enumerate}

\end{proposition}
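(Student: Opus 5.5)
Item (1) comes first, by induction on $n$. The case $n=1$ is the definition of $f_{r,s}$. Assume $f^{n}_{r,s}(x,y)=(x^{r^{n}}y^{s\sigma(n,r)},y)$. Then
$$f^{n+1}_{r,s}(x,y)=f_{r,s}\big(x^{r^{n}}y^{s\sigma(n,r)},y\big)=\big(x^{r^{n+1}}y^{rs\sigma(n,r)+s},y\big).$$
Since $r\sigma(n,r)+1=\sigma(n+1,r)$, this is the claimed formula. In particular $f^{n}_{r,s}=f_{r^{n},\,s\sigma(n,r)}$, so every iterate is again one of the normal-form maps of Theorem \ref{nielsen-number-s1}(1).

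For items (2) and (3) I will apply Theorem \ref{nielsen-number-s1}(1) to $f_{r^{n},s\sigma(n,r)}$. Whenever $(r^{n},s\sigma(n,r))\neq(1,0)$ this gives
$$N_{S^{1}}(f^{n}_{r,s})=\gcd\{r^{n}-1,\;s\sigma(n,r)\}.$$
The key identity is $r^{n}-1=(r-1)\sigma(n,r)$, so
$$\gcd\{(r-1)\sigma(n,r),\,s\sigma(n,r)\}=|\sigma(n,r)|\gcd\{r-1,s\}=|\sigma(n,r)|\,N_{S^{1}}(f_{r,s}),$$
using $\gcd\{ma,mb\}=|m|\gcd\{a,b\}$ and Theorem \ref{nielsen-number-s1}(1) again for $f_{r,s}$ with $(r,s)\neq(1,0)$. (The statement writes $\sigma(r,n)$; I read it as $\sigma(n,r)$ from Notation \ref{notation-1}.)

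The main obstacle is the case bookkeeping, so I will decide exactly when $(r^{n},s\sigma(n,r))=(1,0)$ and when $\sigma(n,r)=0$.
\begin{itemize}
\item $r^{n}=1$ forces either $r=1$, or $r=-1$ with $n$ even.
\item If $r=1$, then $\sigma(n,1)=n\neq0$, so $s\sigma(n,r)=0$ forces $s=0$, i.e.\ $(r,s)=(1,0)$.
\item If $r=-1$ and $n=2k$, then $\sigma(n,-1)=0$, so the iterate is $f_{1,0}$ regardless of $s$.
\item For $|r|\ge 2$ or $r=0$, $\sigma(n,r)\neq0$: the sum is $1$ when $r=0$, and equals $(r^{n}-1)/(r-1)\neq0$ otherwise.
\item For $r=-1$ with $n$ odd, $\sigma=1$.
\end{itemize}
So outside $(r,s)=(1,0)$ and $r=-1$ with $n$ even, the iterate is not $f_{1,0}$ and the gcd formula gives a nonzero value. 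The nonzero value needs $\gcd\{r-1,s\}\neq0$, which holds unless $r=1,s=0$.

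This case analysis has a discrepancy with the statement. For $r=-1$, $n$ even and $s\neq0$, it gives $f^{n}_{r,s}=f_{1,0}$ and hence $N_{S^1}=0$. That agrees with the formula $|\sigma(n,r)|N_{S^1}(f_{r,s})=0$, but it contradicts the "$\neq0$" claim in (2), whose exclusion only covers $s=0$. I will prove (2) with the exclusion widened to all $(-1,s,2k)$ and note the correction. Item (3) then follows directly: for $(r,s)=(1,0)$ or $(r,n)=(-1,2k)$ the iterate is $f_{1,0}$, and Theorem \ref{nielsen-number-s1}(1) gives $N_{S^{1}}=0$.
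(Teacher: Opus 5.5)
Your argument is correct and follows the paper's route: identify $f^{n}_{r,s}=f_{r^{n},\,s\sigma(n,r)}$, apply the gcd formula of Theorem~\ref{nielsen-number-s1}(1), and pull out $|\sigma(n,r)|$ using $r^{n}-1=(r-1)\sigma(n,r)$. The paper cites a lemma from its reference for the iterate formula, where you prove it by induction. The discrepancy you flag is real: for $r=-1$ and $n$ even, $f^{n}_{-1,s}=\mathrm{Id}$ for every $s$ (the paper itself uses this later for the torus), so the ``$\neq 0$'' in (2) fails when $s\neq 0$; in addition, the paper's justification ``$r^{n}-1\neq 0$'' is false for $r=1$, $s\neq 0$, a case your use of $\gcd\{ma,mb\}=|m|\gcd\{a,b\}$ handles without needing it.
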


\begin{proof}
${\bf (1)}$ Follows from \cite[Lemma 5.6]{S-S-25}. ${\bf (2)}$ We have  $f^{n}_{r,s}(x,y)=f_{r^{n},s.\sigma(n,r)}(x,y)$ for each $n \in \mathbb{N}$ and $r \in \mathbb{Z}.$ If $(r,s)\neq(1,0)$ and $(r,s,n)\neq(-1,0,2k)$, $k \in \mathbb{N},$ we have $$N_{S^{1}}(f^{n}_{r,s}) = \displaystyle{gcd\{r^{n}-1,s.\sigma(n,r)\}=|\sigma(n,r)|gcd\{r-1,s\}=|\sigma(n,r)|N_{S^{1}}(f_{r,s})},$$
because
$$r^{n}-1=(r-1).\Big(\displaystyle{\sum_{i=0}^{n-1}r^{i}}\Big)=(r-1).\sigma(n,r)\neq 0.$$
The conditions above implies $N_{S^{1}}(f^{n}_{r,s}) \neq 0.$ 
The item {\bf (3)} is trivial.
\end{proof}

\begin{corollary} \label{corol-index}
If $(r,s)\neq(1,0)$ and $(r,s,n)\neq(-1,0,2k)$, $k \in \mathbb{N},$ then $f_{r,s}$ has the index assumption.
\end{corollary}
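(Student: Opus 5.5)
We have to check the index assumption directly: for $k,m\in\mathbb{N}$, if $C$ is a fixed point class of $f_{r,s}^k$ and $\overline{C}$ is the fixed point class of $f_{r,s}^{km}$ containing it, then $\overline{C}$ essential should force $C$ essential. The plan is to show that, under the stated hypotheses, \emph{every} Nielsen class over $S^1$ of every iterate $f^j_{r,s}$ (with $j=k$ or $j=km$) is essential. Then the implication holds trivially, because its conclusion is always true.

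First, by Proposition \ref{prop-nielsen-fn}(1), $f^j_{r,s}=f_{r^j,\,s\sigma(j,r)}$. Next we check that $(r^j,s\sigma(j,r))\neq(1,0)$ for every $j$. Suppose $r^j=1$ and $s\sigma(j,r)=0$.
\begin{enumerate}[(a)]
\item If $r=1$, then $\sigma(j,1)=j\neq 0$, so $s=0$, which contradicts $(r,s)\neq(1,0)$.
\item If $r=-1$, then $j$ is even and $\sigma(j,-1)=0$. The hypothesis $(r,s,n)\neq(-1,0,2k)$ rules this case out. Here I read the hypothesis as excluding $(r,s)=(-1,0)$ at the relevant even levels; in the $(-1,0)$ case one restricts to the iterates that actually occur.
\end{enumerate}
So for each relevant $j$ we may apply Theorem \ref{nielsen-number-s1}(1) to $f_{r^j,s\sigma(j,r)}$. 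It gives
$$N_{S^1}(f^j_{r,s})=R_{S^1}(f^j_{r,s})=\gcd\{r^j-1,\,s\sigma(j,r)\}=|\sigma(j,r)|\,N_{S^1}(f_{r,s}),$$
which is finite and nonzero by Proposition \ref{prop-nielsen-fn}(2).

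The key step is the equality $N_{S^1}=R_{S^1}$, finite. The Reidemeister number over $S^1$ counts all fixed point classes over $S^1$, including empty ones, and the Nielsen number counts the essential ones. Their equality therefore means that every fixed point class of $f^j_{r,s}$ over $S^1$ is essential. Applying this with $j=k$ shows $C$ is essential, whatever $\overline{C}$ is. This is exactly the index assumption.

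The main obstacle is justifying that $N_B=R_B<\infty$ really means that all Nielsen classes over $B$ are essential, including the correspondence between the classes $C$ at level $k$ and the $R_B$-count at that level. I would settle this from the definitions in \cite{G-K-09}: $R_B$ is the cardinality of the set of fiberwise Reidemeister classes, and each fixed point class maps injectively into it. As a cross-check, I would use the concrete picture from \cite[Proposition 3.4]{G-L-V-V}. $Fix(f^j_{r,s})$ consists of exactly $\gcd\{r^j-1,s\sigma(j,r)\}$ circles, each forming its own class, and their number equals $N_{S^1}(f^j_{r,s})$. Hence every class of this minimal representative is essential. Since $f^k_{r,s}$ is exactly the $k$-th iterate, whose fixed point set is contained in $Fix(f^{km}_{r,s})$, the classes $C$ and $\overline{C}$ are both among these essential circles, and the implication follows.
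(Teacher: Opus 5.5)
Your route is the paper's: show that every fixed point class over $S^1$ of every iterate is essential, using $N_{S^1}=R_{S^1}<\infty$ from Theorem \ref{nielsen-number-s1}(1), so that the index assumption holds trivially. However, step (b) is wrong, and the error breaks exactly this strategy.

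\textbf{The failing step.} When $r=-1$ and $j$ is even, $\sigma(j,-1)=0$ kills $s$. So $(r^j,s\sigma(j,r))=(1,0)$ for \emph{every} $s$, not only for $s=0$. The hypothesis $(r,s,n)\neq(-1,0,2k)$ excludes only $s=0$. Concretely, $f_{-1,1}^2(x,y)=((x^{-1}y)^{-1}y,y)=(x,y)$, and the same computation gives $f_{-1,s}^{j}=\mathrm{Id}$ for all even $j$ and all $s$. For the identity, $N_{S^1}=0$ and $R_{S^1}=\infty$, and its unique fixed point class $\mathbb{T}$ is inessential. Hence the claim ``every class of every iterate is essential'' is false for $(r,s)=(-1,s)$ with $s\neq 0$, and Theorem \ref{nielsen-number-s1}(1) gives you nothing at those levels. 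Your cross-check via $\gcd$-many circles fails there too, since $Fix(f^2_{-1,s})=\mathbb{T}$. The paper's one-line proof (``all fixed point classes are essential'') has the same oversight. So does the exceptional set in Proposition \ref{prop-nielsen-fn}(2), even though the paper later uses $f^n_{-1,s}=\mathrm{Id}$ for even $n$.

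\textbf{The repair.} The index assumption is an implication, so you only need $C$ essential when $\overline{C}$ is essential.
\begin{itemize}
\item For $r\neq\pm1$ (so $r^j\neq 1$), and for $r=1$ with $s\neq 0$, your argument works as written.
\item For $r=-1$ and any $s$: suppose $\overline{C}$ is an essential class of $f^{km}$. Then $N_{S^1}(f^{km})\neq 0$, so $f^{km}\neq\mathrm{Id}$, which forces $km$ to be odd. Hence $k$ is odd and $f^k=f_{-1,s}$. Since $N_{S^1}(f_{-1,s})=R_{S^1}(f_{-1,s})=\gcd\{2,s\}<\infty$, every class of $f^k$, in particular $C$, is essential.
\end{itemize}
This covers every level at once. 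It also makes your proviso for the $(-1,0)$ case unnecessary, namely that ``one restricts to the iterates that actually occur''. That proviso is not legitimate as stated anyway, because the index assumption quantifies over all $k$ and all multiples $km$.
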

\begin{proof}
Follows from Theorem \ref{nielsen-number-s1} that in this situation all fixed point class are essential, therefore the result follows.
\end{proof}

\begin{proposition} \label{prop-n-m}
If $n$ is a multiple of $m$ and $r^{m} -1 \neq 0$  then $N_{S^{1}}(f^{n}_{r,s})$ is a multiple of $N_{S^{1}}(f^{m}_{r,s})$. 
\end{proposition}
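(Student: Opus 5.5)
The plan is to compute both Nielsen numbers explicitly, using Proposition \ref{prop-nielsen-fn}(1) together with the gcd formula of Theorem \ref{nielsen-number-s1}(1), and then compare them. By Proposition \ref{prop-nielsen-fn}(1), $f^{k}_{r,s}=f_{r^{k},s\sigma(k,r)}$ for every $k$. Therefore, whenever $(r^{k},s\sigma(k,r))\neq(1,0)$,
$$N_{S^{1}}(f^{k}_{r,s})=\gcd\{r^{k}-1,\,s\,\sigma(k,r)\}.$$

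First we handle the degenerate case. The hypothesis $r^{m}-1\neq 0$ says $r\neq 1$, and $r\neq -1$ when $m$ is even. Write $n=mq$. If $r^{n}=1$, then $r=-1$ and $n$ is even. In that case $N_{S^{1}}(f^{n}_{r,s})$ is either $\gcd\{0,s\sigma(n,-1)\}=\gcd\{0,0\}$ (since $\sigma(n,-1)=0$), or, when the pair $(r^{n},s\sigma(n,r))$ equals $(1,0)$, it equals $0$ by Proposition \ref{prop-nielsen-fn}(3). Either way the value is $0$, which is a multiple of anything. So we may assume $r^{n}\neq 1$, and all gcd's involved have a nonzero first entry.

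Next we relate the two gcd's. Set $g=f^{m}_{r,s}=f_{R,S}$ with $R=r^{m}\neq 1$ and $S=s\sigma(m,r)$. Then $f^{n}_{r,s}=g^{q}=f_{R^{q},S\sigma(q,R)}$, again by Proposition \ref{prop-nielsen-fn}(1) applied to $g$. Now
$$R^{q}-1=(R-1)\sigma(q,R),\qquad S\sigma(q,R)=S\cdot\sigma(q,R),$$
so
$$\gcd\{R^{q}-1,\,S\sigma(q,R)\}=|\sigma(q,R)|\gcd\{R-1,S\},$$
using $\gcd\{ca,cb\}=|c|\gcd\{a,b\}$, where $\sigma(q,R)\neq 0$ because $R^{q}\neq 1$ and $R\neq 1$. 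Hence
$$N_{S^{1}}(f^{n}_{r,s})=|\sigma(q,r^{m})|\,N_{S^{1}}(f^{m}_{r,s}).$$

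The main obstacle is the consistency check that the identity $f^{k}_{r,s}=f_{r^{k},s\sigma(k,r)}$ composes correctly, i.e.\ $\sigma(m,r)\sigma(q,r^{m})=\sigma(mq,r)$. This is the telescoping identity for geometric sums, so the approach should go through. We must also make sure that the gcd formula is applied only where Theorem \ref{nielsen-number-s1} permits, namely for pairs different from $(1,0)$, which the case split above guarantees.
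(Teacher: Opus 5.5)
Your proof is correct. It uses the same algebraic ingredients as the paper: the factorization $x^{q}-1=(x-1)\sigma(q,x)$ and $\gcd\{ca,cb\}=|c|\gcd\{a,b\}$. The organization differs, though.

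The paper goes back to level $1$. It applies $N_{S^{1}}(f^{k}_{r,s})=|\sigma(k,r)|\,N_{S^{1}}(f_{r,s})$ from Proposition \ref{prop-nielsen-fn}(2) at $k=n$ and at $k=m$. It then factors $\sigma(n,r)=\sigma(n/m,r^{m})\,\sigma(m,r)$.

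You instead treat $f^{m}_{r,s}$ as the member $f_{R,S}$ of the same family and do the gcd computation once, at level $q=n/m$. This gives the quotient $|\sigma(q,r^{m})|$ directly and never uses the product factorization of $\sigma(n,r)$. So the ``consistency check'' you flag as the main obstacle is superfluous: $f^{n}_{r,s}=(f^{m}_{r,s})^{q}$ holds literally, by associativity.

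Your route also treats explicitly the case $r^{n}=1$, that is, $r=-1$, $m$ odd, $n$ even. The paper's one-line proof does not single this case out. There the formula it borrows from Proposition \ref{prop-nielsen-fn}(2) holds only because both sides are $0$. That proposition either excludes this case (when $s=0$) or, when $s\neq 0$, covers it while its proof asserts $r^{n}-1\neq 0$. The paper's version is shorter once that proposition is available.

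One small clean-up in your degenerate case. Since $\sigma(n,-1)=0$ for even $n$, the pair $(r^{n},s\sigma(n,r))$ is always $(1,0)$. So the ``$\gcd\{0,0\}$'' alternative never occurs, and the gcd formula would not apply to it anyway. Just say $f^{n}_{r,s}=f_{1,0}=\mathrm{Id}$, hence $N_{S^{1}}(f^{n}_{r,s})=0$ by Theorem \ref{nielsen-number-s1}(1).
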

\begin{proof}
This follows from the below equation, and the fact that $N_{S^{1}}(f^{n}_{r,s}) = |\sigma(n,r)|N_{S^{1}}(f_{r,s}).$ 
$$ \sigma(n,r) = \displaystyle{\sum_{i=1}^{n}r^{i-1}} = \left(\displaystyle{\sum_{j=1}^{\frac{n}{m}}r^{m(j-1)}} \right)
\displaystyle{\sum_{i=1}^{m}r^{i-1}} = \left(\displaystyle{\sum_{j=1}^{\frac{n}{m}}r^{m(j-1)}} \right) \sigma(m,r) = \left( \frac{r^{n} -1}{r^{m}-1} \right) \sigma(m,r).$$
\end{proof}

\begin{proposition} \label{prop-hperf}
Let $f: \mathbb{T} \to \mathbb{T}$ be a fiber map over $S^{1}$ such that $f \sim_{S^{1}} f_{r,s}$ where $(r,s)\neq(1,0)$ and $(r,s,n)\neq(-1,0,2k)$, $k\in\n.$ Then $f$ is a $n-$toral map over $S^{1}$ and therefore if  $A_n(f) \neq 0.$ Thus $n \in H_{S^{1}}Per(f).$ 
\end{proposition}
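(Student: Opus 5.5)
Since $H_{S^1}Per$ and the Nielsen numbers over $S^1$ are invariant under fiberwise homotopy, I will replace $f$ by the minimal map $f_{r,s}$ throughout. The plan is to verify the hypotheses of Theorem \ref{th-nbpn} for $f_{r,s}$ and this $n$: first that $f_{r,s}$ is $n$-toral over $S^1$, and then that $0\neq N_{S^1}(f_{r,s}^m)=R_{S^1}(f_{r,s}^m)$ for every $m\mid n$. Once both hold, Theorem \ref{th-nbpn} gives $A_n(f)=N_{S^1}P_n(f)$. Then $A_n(f)\neq 0$ forces $n\in H_{S^1}Per(f)$, because $N_{S^1}P_n(f)\le C_{S^1}P_n(g)$ for every $g\sim_{S^1} f$.

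For the Nielsen--Reidemeister condition I use Proposition \ref{prop-nielsen-fn}(1), which gives $f_{r,s}^m=f_{r^m,\,s\sigma(m,r)}$. Applying Theorem \ref{nielsen-number-s1}(1) to this map yields $N_{S^1}(f^m_{r,s})=R_{S^1}(f^m_{r,s})=\gcd\{r^m-1,s\sigma(m,r)\}$, valid as long as $(r^m,s\sigma(m,r))\neq(1,0)$. So the task is to exclude $(r^m,s\sigma(m,r))=(1,0)$ for each divisor $m$ of $n$.
\begin{itemize}
\item $r^m=1$ forces $r=1$, or $r=-1$ with $m$ even.
\item If $r=1$, then $\sigma(m,1)=m\neq 0$, so $s\sigma(m,r)=0$ forces $s=0$. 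This is excluded by $(r,s)\neq(1,0)$.
\item If $r=-1$ with $m$ even, then $\sigma(m,-1)=0$, so $s\sigma(m,r)=0$ automatically. Since $m\mid n$, $n$ is also even, so we are in the excluded case $(r,s,n)=(-1,s,2k)$.
\end{itemize}
The last case needs care: when $s\neq 0$ and $n$ is even, $\sigma(m,-1)=0$ still gives Nielsen number $0$. The statement as written only excludes $s=0$, so I would read the hypothesis as excluding $r=-1$ with $n$ even altogether. This matches the hypothesis of Proposition \ref{prop-nielsen-fn}(2), where $|\sigma(n,-1)|=0$ for even $n$ regardless of $s$. Under this reading, Proposition \ref{prop-nielsen-fn}(2) gives $N_{S^1}(f^m)=|\sigma(m,r)|\gcd\{r-1,s\}\neq0$ for all $m\mid n$.

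The main obstacle is the $n$-toral property, whose definition lives in \cite[Definition 5.1]{S-S-25}. What I would try first is to check it through the structure of $f^m_{r,s}$ for $m\mid n$. By \cite[Proposition 3.4]{G-L-V-V} applied to $f_{r^m,s\sigma(m,r)}$, $Fix(f^m_{r,s})$ is a disjoint union of exactly $N_{S^1}(f^m_{r,s})$ circles, each of which is a single essential Nielsen class over $S^1$. Moreover $\mathbb{T}$ is a torus and $f^m$ is the linear map induced on $\mathbb{Z}^2$ by the matrix $\begin{pmatrix} r^m & s\sigma(m,r)\\ 0&1\end{pmatrix}$. The toral condition in \cite{S-S-25} is essentially that the relevant fundamental-group/Reidemeister data for the iterates behave as for linear torus maps. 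In particular, the Reidemeister classes of $f^m$ should inject into those of $f^n$ via the boosting map, and these classes correspond to cosets of the image of $\mathrm{Id}-(f^m)_\#$ on the abelian group. This should follow directly from the linear (abelian $\pi_1$) description, by the same argument \cite[Lemma 5.6]{S-S-25} uses; I would cite that lemma and check its hypotheses against the formula for $f^n_{r,s}$. Finally, Corollary \ref{corol-index} supplies the index assumption if it is needed as part of the toral definition.
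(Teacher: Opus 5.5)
Your overall route is the paper's. The paper proves this statement in one line by citing \cite[Proposition 5.7]{S-S-25}, i.e.\ it imports both the $n$-torality of $f_{r,s}$ and the conclusion of Theorem \ref{th-nbpn}. Your explicit check of the Nielsen--Reidemeister hypothesis of Theorem \ref{th-nbpn} is correct. You have $f^m_{r,s}=f_{r^m,\,s\sigma(m,r)}$, and the pair $(r^m,s\sigma(m,r))$ equals $(1,0)$ only when $(r,s)=(1,0)$ or when $r=-1$ and $m$ is even. In every other case Theorem \ref{nielsen-number-s1}(1) gives $R_{S^1}(f^m)=N_{S^1}(f^m)=|\sigma(m,r)|\gcd\{r-1,s\}\neq 0$.

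Your reading of the hypothesis is not just convenient but necessary. For $r=-1$ and $n$ even, $f^n_{-1,s}=f_{1,0}$ for every $s$, so $N_{S^1}(f^n)=0\neq R_{S^1}(f^n)$, and the statement as literally written is false. Take $(r,s,n)=(-1,2,2)$: then $A_2(f)=0-\gcd\{-2,2\}=-2\neq 0$. Yet the fiber-preserving homeomorphism $(x,y)\mapsto (xy^{-1},y)$ conjugates $f_{-1,2}$ to $f_{-1,0}$, whose period $2$ can be removed over $S^1$ (e.g.\ by $(x,y)\mapsto(-x+\varepsilon\sin 2\pi x,\,y)$ in additive coordinates), in agreement with Theorem \ref{main-result}. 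Note also that Proposition \ref{prop-nielsen-fn}(2) has the same defect: it asserts $N_{S^1}(f^n)\neq 0$ in exactly this case. So it does not ``match'' your reading; it needs the same correction.

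The one step you do not actually establish is $n$-torality. Your description via injectivity of boosting maps on Reidemeister classes is a guess at \cite[Definition 5.1]{S-S-25}. Moreover, \cite[Lemma 5.6]{S-S-25} is used in this paper only for the iterate formula $f^n_{r,s}=f_{r^n,s\sigma(n,r)}$, so appealing to ``the same argument'' proves nothing. Since the definition lives outside this paper, this step has to be imported, as the paper does, from \cite[Proposition 5.7]{S-S-25}. With that citation replacing your sketch, and with $r=-1$, $n$ even excluded for all $s$, your argument is complete.
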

\begin{proof}
Follows from \cite[Proposition 5.7]{S-S-25}.
\end{proof}

\begin{proposition}
Let $f: \mathbb{T} \to \mathbb{T}$ be a fiber map over $S^{1}$ such that $f \sim_{S^{1}} f_{r,s}$ with $|r| > 1$ and $r \neq -2.$ Then $Per(f) = \mathbb{N}.$ 	
\end{proposition}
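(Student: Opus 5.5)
We read the statement as $H_{S^1}Per(f)=\mathbb{N}$: since $H_{S^1}Per(f)$ is the intersection of the sets $Per(g)$ over $g\sim_{S^1}f$, this also gives $Per(f)=\mathbb{N}$. By the fiberwise homotopy invariance of $H_{S^1}Per$ we may replace $f$ by $f_{r,s}$ with $|r|>1$, $r\neq -2$. Then $r\neq \pm1$, so $(r,s)\neq(1,0)$ and $(r,s,n)\neq(-1,0,2k)$ for every $n$. Hence Proposition \ref{prop-nielsen-fn} gives, for all $n\ge1$,
$$N_{S^{1}}(f^{n}_{r,s}) = |\sigma(n,r)|\,N_{S^{1}}(f_{r,s}) = \frac{|r^n-1|}{|r-1|}\,\gcd\{r-1,s\}\neq 0 ,$$
and Corollary \ref{corol-index} shows that $f_{r,s}$ satisfies the index assumption. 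The natural route is Lemma \ref{lemma-hpers1}: we must check that $N_{S^1}(f^{n+1})/N_{S^1}(f^{n})=|r^{n+1}-1|/|r^n-1|\ge 2$ for all $n\ge 1$.

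For $r\ge 2$ this is immediate. Indeed, $r^{n+1}-1 = r(r^n-1)+(r-1)\ge 2(r^n-1)$.

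For $r\le -3$, write $r=-m$ with $m\ge3$. For $n$ even we have $|r^n-1|=m^n-1$ and $|r^{n+1}-1|=m^{n+1}+1$, so the ratio is clearly $\ge m\ge 2$. For $n$ odd we have $|r^n-1|=m^n+1$ and $|r^{n+1}-1|=m^{n+1}-1$. The required inequality $m^{n+1}-1\ge 2m^n+2$ is equivalent to $m^n(m-2)\ge 3$, which holds since $m\ge3$ and $n\ge1$.

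This inequality is exactly where $r=-2$ fails. There, at $n=1$, the ratio is $|(-2)^2-1|/|(-2)-1| = 3/3=1$, which is consistent with $2$ being removable in that case. With the ratio bound established, Lemma \ref{lemma-hpers1} gives $H_{S^1}Per(f_{r,s})=\mathbb{N}$, hence $H_{S^1}Per(f)=\mathbb{N}$ and in particular $Per(f)=\mathbb{N}$.

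The main obstacle is the negative case, since the Nielsen numbers alternate between $m^n+1$ and $m^n-1$ there. The odd-$n$ step is where one must use $m\ge3$. If one preferred to avoid Lemma \ref{lemma-hpers1}'s reliance on Proposition \ref{prop-hpers1}, a fallback would be Proposition \ref{prop-hperf}. For that we would check $A_n(f)\neq0$ via Theorem \ref{theorem-An}, bounding the alternating sum by its leading term $N_{S^1}(f^n)$ minus the sum over maximal proper divisors, which the same growth estimate controls.
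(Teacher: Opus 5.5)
Your proof is correct, but it is not the route the paper takes for this statement. The paper argues on a single fiber. A homotopy over $S^1$ restricts to a homotopy of self-maps of each fiber, so $f|_{S^1}$ is a circle map of degree $r$. The classical theorem for circle maps (Jezierski--Marzantowicz, Theorem 6.1.4) then gives $HPer(f|_{S^1})=\mathbb{N}$ for $|r|>1$, $r\neq -2$, hence $Per(f|_{S^1})=\mathbb{N}$. Finally, Proposition~\ref{prop-restriction} lifts this to $Per(f)=\mathbb{N}$.

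What you wrote is essentially the paper's \emph{next} proposition (the case $r\ge 2$ or $r\le -3$), run through Proposition~\ref{prop-nielsen-fn}, Corollary~\ref{corol-index} and Lemma~\ref{lemma-hpers1}. The two routes trade off as follows.
\begin{itemize}
\item The fiber argument is much shorter and uses no fiberwise Nielsen theory or index assumption. Since every $g\sim_{S^1}f$ also has fiber degree $r$, it already gives $H_{S^1}Per(f)=\mathbb{N}$, and it shows the exception $r=-2$ is simply inherited from the circle. Its cost is reliance on an external classification.
\item Your route stays inside the paper's own framework and proves the stronger statement directly. Its cost is dependence on Proposition~\ref{prop-hpers1}, whose proof in the paper is only sketched.
\end{itemize}

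Your growth estimate is more careful than the paper's. The paper asserts $|r+1/\sigma(n,r)|>2$, which is an equality at $r=-3$, $n=1$. Your case split $m^n(m-2)\ge 3$ correctly yields $\ge 2$, which is all Lemma~\ref{lemma-hpers1} needs.

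That lemma's displayed bound also has an index slip: $\sum_{k=1}^{n-1}N_B(f^k)$ is only $<2N_B(f^{n-1})$, not $<N_B(f^{n-1})$. The conclusion survives because $2N_B(f^{n-1})\le N_B(f^n)$, so your citation of the lemma is sound.
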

\begin{proof}
Since $f \sim_{S^{1}} f_{r,s}$ then $deg(f|_{S^{1}}) = r.$ From \cite[Theorem 6.1.4]{J-M-06} we have $Per(f|_{S^{1}}) = \mathbb{N}.$ By Proposition 
\ref{prop-restriction} follows that $Per(f) = \mathbb{N}.$
\end{proof}

%\begin{proposition}
%Let $f: \mathbb{T} \to \mathbb{T}$ be a fiber-preserving map such that $f %\simeq_{S^{1}} f_{r,s}$ where $(r,s)\neq(1,0)$ and $(r,s,n)\neq(-1,0,2k)$, %$k\in\n.$ Then,
%
%\begin{enumerate}[(i)]
%	
%\item If $A_{n}(f) = 0$ then $H_{S^{1}}P_n(f) = \emptyset.$ 
%
%\item $H_{S^{1}}P_n(f)$ is composed by $A_{n}(f)$ disjoint circles. 
%
%\item $N_{S^{1}}(f^{n}) = \sum_{k|n} A_{k}(f).$
%
%\item $H_{S^{1}}Per(f) = \{n \in \mathbb{N}| A_{n}(f) \neq 0 \}.$
%
%\item $H_{S^{1}}Per(f) \subset \{n \in \mathbb{N}| N_{S^{1}}(f^{n}) \neq 0 %\}.$ 
%\end{enumerate}
%\end{proposition}

\begin{proposition} Let $f: \mathbb{T} \to \mathbb{T}$ be a fiber map over $S^{1}$ such that $f \sim_{S^{1}} f_{r,s}$ with $r\geq 2$ or $r\leq -3.$ Then $\dfrac{N_{S^{1}}(f^{n+1})}{N_{S^{1}}(f^{n})} \geq 2$ for all $n\geq1,$ and therefore $H_{S^{1}}Per(f) = \mathbb{N}.$
\end{proposition}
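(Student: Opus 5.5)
Since $N_{S^{1}}$ is a fiberwise homotopy invariant, I replace $f$ by $f_{r,s}$ throughout. The condition $r\geq 2$ or $r\leq -3$ gives $r\neq 1$ and $r\neq -1$, so $(r,s)\neq(1,0)$ and the excluded triples $(-1,0,2k)$ never occur. By Proposition \ref{prop-nielsen-fn}(2),
$$N_{S^{1}}(f^{n}) = |\sigma(n,r)|\,N_{S^{1}}(f_{r,s}) \neq 0 \quad \text{for every } n\geq 1 .$$
Hence the quotient $N_{S^{1}}(f^{n+1})/N_{S^{1}}(f^{n})$ is well defined and equals $|\sigma(n+1,r)|/|\sigma(n,r)|$. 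The factor $N_{S^{1}}(f_{r,s})=\gcd\{r-1,s\}$ cancels, so the problem becomes purely arithmetic.

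For the ratio, I use $\sigma(n,r)=\frac{r^{n}-1}{r-1}$ to write
$$\frac{|\sigma(n+1,r)|}{|\sigma(n,r)|}=\frac{|r^{n+1}-1|}{|r^{n}-1|}.$$
When $r\geq 2$, the identity $r^{n+1}-1 = r(r^{n}-1) + (r-1)$ gives $r^{n+1}-1 \geq r(r^{n}-1)\geq 2(r^{n}-1)$, so the ratio is at least $2$. When $r\leq -3$, put $m=|r|\geq 3$. Then $|r^{n}-1|$ equals $m^{n}-1$ for $n$ even and $m^{n}+1$ for $n$ odd. The bad case is $n$ odd, where we need
$$m^{n+1}-1 \geq 2(m^{n}+1), \quad\text{i.e.}\quad m^{n}(m-2)\geq 3 .$$
This holds because $m\geq 3$ and $n\geq 1$ give $m^{n}(m-2)\geq 3$. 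For $n$ even we need $m^{n+1}+1\geq 2(m^{n}-1)$, which is immediate. This odd-$n$ estimate is the one delicate step, and it is exactly where $r=-2$ fails: at $n=1$ the ratio is $|\sigma(2,-2)|/|\sigma(1,-2)|=1$.

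To conclude $H_{S^{1}}Per(f)=\mathbb{N}$, I apply Lemma \ref{lemma-hpers1}. Its index assumption hypothesis is supplied by Corollary \ref{corol-index}, since the excluded cases do not arise. As a cross-check one could instead use Proposition \ref{prop-hperf}, showing $A_n(f)\neq 0$ through Theorem \ref{theorem-An}, but the ratio route is shorter. Either way the conclusion transfers from $f_{r,s}$ to $f$ because $H_{S^{1}}Per$ is a fiberwise homotopy invariant.
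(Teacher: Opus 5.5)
Your proof is correct and follows the paper's route. You reduce to $f_{r,s}$, write the ratio as $|\sigma(n+1,r)|/|\sigma(n,r)|$, bound it below by $2$, and conclude via Lemma \ref{lemma-hpers1} (equivalently Proposition \ref{prop-hpers1}) with the index assumption from Corollary \ref{corol-index}.

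The only difference is in the algebra: the paper rewrites the ratio as $|r+1/\sigma(n,r)|$, while you split on the sign of $r$ and the parity of $n$. Your version is slightly more careful, since it shows the ratio equals exactly $2$ at $r=-3$, $n=1$. There the paper's claimed strict inequality ``$>2$'' fails, harmlessly, because only $\geq 2$ is needed.
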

\begin{proof}
Let $r$ be an integer such that $r\geq 2$ or $\leq -3$ and $n\geq1$. Then, $$N_{S^{1}}(f^{n})=\underbrace{|\sigma(n,r)|}_{\neq0}\underbrace{gdc\{r-1,s\}}_{\neq0}.$$
We observe that $r^{n}-1=\left(1+r+\dots+r^{n-1}\right)(r-1)=\sigma(n,r)(r-1)\Rightarrow r^{n}=\sigma(n,r)(r-1)+1$. So,
$$\dfrac{N_{S^{1}}(f^{n+1})}{N_{S^{1}}(f^{n})} = \dfrac{|\sigma(n+1,r)|}{|\sigma(n,r)|}=\left|\dfrac{1+r+\dots+r^{n-1}+r^{n}}{1+r+\dots+r^{n-1}}\right|=\left|1+\dfrac{r^{n}}{\sigma(n,r)}\right|=\left|1+(r-1)+\dfrac{1}{\sigma(n,r)}\right|$$
$$=\left|r+\dfrac{1}{\sigma(n,r)}\right|> 2.$$
By Proposition \ref{prop-hpers1} and Corollary \ref{corol-index} we obtain $H_{S^{1}}Per(f) = \mathbb{N}.$
\end{proof}

\begin{proposition}
Let $f: \mathbb{T} \to \mathbb{T}$ be a fiber map over $S^{1}$ such that $f \sim_{S^{1}} f_{r,s}$ with $r=0.$ Then $H_{S^{1}}Per(f)= \{1\}.$ 
\end{proposition}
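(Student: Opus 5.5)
Since $H_{S^{1}}Per$ is an invariant of the fiberwise homotopy class, we may replace $f$ by $f_{0,s}(x,y)=(y^{s},y)$ and prove two inclusions: $1\in H_{S^{1}}Per(f)$, and every $n\geq 2$ can be removed by a single explicit representative. For the lower bound we invoke Theorem \ref{nielsen-number-s1}, item (1). Since $(0,s)\neq(1,0)$, it gives $C_{S^{1}}(f)=N_{S^{1}}(f)=\gcd\{-1,s\}=1$. Hence every $g\sim_{S^{1}} f$ has a fixed point, so $1\in Per(g)$ for all such $g$, i.e. $1\in H_{S^{1}}Per(f)$.

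For the upper bound we show that $f_{0,s}$ itself has no periodic points of minimal period $n\geq 2$. Proposition \ref{prop-nielsen-fn}(1) gives $f^{n}_{0,s}(x,y)=(x^{0}y^{s\sigma(n,0)},y)$, and $\sigma(n,0)=1$ with the convention $0^{0}=1$. So $f^{n}_{0,s}=f_{0,s}$ for every $n\geq1$. The same conclusion can be checked directly: $f_{0,s}(y^{s},y)=(y^{s},y)$, so the map is idempotent. Consequently, if $f^{n}_{0,s}(x,y)=(x,y)$ then already $f_{0,s}(x,y)=(x,y)$, and every periodic point is a fixed point. Thus $Per(f_{0,s})=\{1\}$ (nonempty, since the fixed point set is the circle $\{(y^{s},y)\}$). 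Therefore $H_{S^{1}}Per(f)\subseteq Per(f_{0,s})=\{1\}$, and combined with the first paragraph we get equality.

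There is no real obstacle. The only care needed is to justify the reduction to the representative $f_{0,s}$, which follows from the invariance remark at the start of Section 2 together with the normal form from Theorem \ref{nielsen-number-s1}. We also have to interpret $\sigma(n,0)$ correctly, and the direct idempotence computation avoids any ambiguity there. The Nielsen-type machinery (Proposition \ref{prop-hpers1}, Theorem \ref{th-nbpn}) is unnecessary here, since $N_{S^{1}}(f^{n})=1$ for all $n$ and $A_{n}(f)=0$ for $n\geq2$, which is consistent with the conclusion.
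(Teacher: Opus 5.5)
Your proof is correct and uses the same idea as the paper, which also combines $f^{n}_{0,s}=f_{0,s}$ (from Proposition \ref{prop-nielsen-fn}) with $N_{S^{1}}(f)=1\neq 0$. The differences are minor: you obtain $1\in H_{S^{1}}Per(f)$ directly from $C_{S^{1}}(f)=N_{S^{1}}(f)=1$ rather than through $A_1(f)$ and Proposition \ref{prop-hperf}, and you state explicitly that $Per(f_{0,s})=\{1\}$, which is what actually excludes every $n\geq 2$ (the paper's remark that $A_n(f_{0,s})=0$ would not do this alone, since Proposition \ref{prop-hperf} only gives $A_n\neq 0\Rightarrow n\in H_{S^{1}}Per(f)$).
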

\begin{proof}
In fact, by Proposition \ref{prop-nielsen-fn} we have $f^{n}_{0,s} = f_{0,s}$ for all $n.$ Since $A_1(f) = N_{S^{1}}(f) \neq 0$ and $A_n(f_{0,s}) = 0$ for all $n \geq 2,$ then the result follows by Proposition \ref{prop-hperf}.
\end{proof}

\begin{proposition} 
Let $f: \mathbb{T} \to \mathbb{T}$ be a fiber map over $S^{1}$ such that $f \sim_{S^{1}} f_{1,s}$ with $s \neq 0.$ Then $H_{S^{1}}Per(f_{1,s}) = \mathbb{N}.$
\end{proposition}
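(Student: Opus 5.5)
The plan is to apply Proposition \ref{prop-hperf}. Since $(r,s)=(1,s)$ with $s\neq 0$, we have $(r,s)\neq(1,0)$, and the exceptional triples $(-1,0,2k)$ do not occur. So for every $n\in\mathbb{N}$ the map $f\sim_{S^{1}} f_{1,s}$ is $n$-toral over $S^{1}$, and it suffices to show that $A_n(f)\neq 0$ for every $n$. Because $H_{S^1}Per$ is invariant under fiberwise homotopy, we may work directly with $f_{1,s}$.

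First, compute the Nielsen numbers of the iterates. By Proposition \ref{prop-nielsen-fn}(1), $f_{1,s}^{n}=f_{1,\,s\sigma(n,1)}$ with $\sigma(n,1)=n$, so $f_{1,s}^n=f_{1,ns}$. Item (2) of the same proposition, or directly Theorem \ref{nielsen-number-s1}(1) since $(1,ns)\neq(1,0)$, then gives
$$N_{S^{1}}(f^{n}_{1,s})=\gcd\{0,ns\}=n|s|.$$
This also agrees with $|\sigma(n,1)|\,N_{S^1}(f_{1,s})=n\gcd\{0,s\}=n|s|$.

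Next, compute $A_n(f)$ with Theorem \ref{theorem-An}. Write $n=p_1^{\alpha_1}\cdots p_t^{\alpha_t}$. The alternating sum factors over the primes because $N_{S^1}(f^m)=m|s|$ is linear in $m$:
$$A_n(f)=|s|\prod_{j=1}^{t}\bigl(p_j^{\alpha_j}-p_j^{\alpha_j-1}\bigr)=|s|\,\varphi(n),$$
where $\varphi$ is Euler's totient. For $n=1$ we have directly $A_1(f)=N_{S^1}(f)=|s|$. In all cases $A_n(f)>0$ because $s\neq0$.

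Proposition \ref{prop-hperf} (equivalently Theorem \ref{th-nbpn}, since $N_{S^1}(f^m)=R_{S^1}(f^m)=m|s|\neq0$ for every $m\mid n$) then gives $n\in H_{S^1}Per(f)$ for all $n$, so $H_{S^{1}}Per(f)=\mathbb{N}$.

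The only delicate point is the factorization of the sum in Theorem \ref{theorem-An}. I would check it by expanding $\prod_j\bigl(p_j^{\alpha_j}-p_j^{\alpha_j-1}\bigr)$ and matching each term, including its sign $(-1)^{\sum(\alpha_j-k_j)}$, with the corresponding term $N_{S^1}\bigl(f^{p_1^{k_1}\cdots p_t^{k_t}}\bigr)=|s|\,p_1^{k_1}\cdots p_t^{k_t}$.
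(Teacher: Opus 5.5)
Your proof is correct and takes the same route as the paper: apply Proposition \ref{prop-hperf} once you know $A_n(f)=|s|\varphi(n)>0$ for every $n$. The paper quotes this formula for $A_n(f)$ from \cite[Proposition 5.8]{S-S-25}, whereas you derive it yourself from Theorem \ref{theorem-An} using $N_{S^1}(f^m)=m|s|$. That derivation is just M\"obius inversion of a linear function of $m$, so your factorization check goes through.
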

\begin{proof}
 Let $n=p_{1}^{\alpha_{1}}\dots p_{l}^{\alpha_{l}}$ is its prime factorization. From \cite[Proposition 5.8]{S-S-25} we have $A_n(f) = |s|p_{1}^{\alpha_{1}-1}\dots p_{l}^{\alpha_{l}-1}(p_{1}-1)\dots(p_{l}-1) > 0.$ By Proposition \ref{prop-hperf} the result follows.
\end{proof}

\begin{proposition} 
	Let $f: \mathbb{T} \to \mathbb{T}$ be a fiber map over $S^{1}$ such that $f \sim_{S^{1}} f_{-1,s}.$ Then $H_{S^{1}}Per(f) = \{1\}.$
\end{proposition}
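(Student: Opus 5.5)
We need to show two things: $1\in H_{S^1}Per(f)$, and no $n\geq 2$ is forced. Since $H_{S^1}Per$ is an invariant of the fiberwise homotopy class, we may replace $f$ by $f_{-1,s}(x,y)=(x^{-1}y^s,y)$.

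\emph{Period $1$ is forced.} Since $(r,s)=(-1,s)\neq(1,0)$, Theorem \ref{nielsen-number-s1}(1) gives
$$N_{S^1}(f)=C_{S^1}(f)=\gcd\{-2,s\}\geq 1.$$
Hence every $g\sim_{S^1}f$ has a fixed point, and $1\in H_{S^1}Per(f)$. Equivalently, one can invoke Proposition \ref{prop-hperf} with $n=1$, because $A_1(f)=N_{S^1}(f)\neq 0$.

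\emph{No $n\geq 2$ is forced.} We show the single representative $f_{-1,s}$ already has $Per(f_{-1,s})\subseteq\{1,2\}$, and then remove period $2$ by an explicit fiberwise homotopy.
\begin{enumerate}[(a)]
\item By Proposition \ref{prop-nielsen-fn}(1), $f^n_{-1,s}(x,y)=(x^{(-1)^n}y^{s\sigma(n,-1)},y)$. Here $\sigma(n,-1)=0$ for $n$ even and $1$ for $n$ odd. So $f^2_{-1,s}=Id$, every point has minimal period $1$ or $2$, and $Per(f_{-1,s})\subseteq\{1,2\}$.
\item We deform within the class to kill period $2$. Write $x=e^{2\pi i\theta}$, and on each fiber $\{y\}\times S^1$ the map is the reflection $x\mapsto \bar x\, y^s$. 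Replace it by
$$g(x,y)=(\phi_y(x)^{-1}y^s,\,y),$$
where $\phi_y$ is a fiberwise self-map of degree $1$ chosen so that each fiber map $g_y$ is a degree $-1$ circle map whose only periodic points are fixed points. Concretely, in a lift, take $G_y(\theta)=-\theta+c_y+\varepsilon\sin(2\pi\theta-\pi c_y)$-type perturbations, or more simply take $g_y$ to be the composition of the reflection with a flow toward its two fixed points. Such a $g_y$ is monotone decreasing, so $g_y^2$ is increasing. A degree $-1$ homeomorphism has two fixed points, and if $g_y^2$ pushes points monotonically toward them there are no other $2$-periodic points. This is done continuously in $y$ with a straight-line homotopy in lifts, so $g\sim_{S^1}f_{-1,s}$. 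Then $Per(g)=\{1\}$, since each fiber's dynamics is the whole dynamics of $g$.
\end{enumerate}
This is the main obstacle: the construction must be continuous in $y$. The fixed points of $x\mapsto\bar x y^s$ are $x=\pm y^{s/2}$. As $y$ goes around the circle they move continuously and are swapped if $s$ is odd, but the set of fixed points still varies continuously. Choose $g_y(x)=h_y(\bar x y^s)$, where $h_y$ is the time-$\varepsilon$ map of the flow on the circle vanishing exactly at $\pm y^{s/2}$ (for instance the vector field $\operatorname{Im}(\overline{y^{s}}\,x^2)$ tangent direction). This vector field depends only on $y^s$ and $x^2$, so it is well defined for every $s$. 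Reflection conjugates the flow to its inverse, so $g_y^2$ is the time-$2\varepsilon$ flow up to a sign check; I would verify this sign carefully. If the composition instead cancels, I would use $g_y=h_y\circ R_y\circ h_y$ with $h_y$ chosen so that $g_y^2=h_y^{2}$-like contracting dynamics.

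Combining the two parts, $H_{S^1}Per(f)\subseteq Per(g)=\{1\}\subseteq H_{S^1}Per(f)$.
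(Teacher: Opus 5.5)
Your treatment of period $1$ and of $n\ge 3$ (step (a)) is fine. The gap is step (b), and it is more than a sign to be checked. The field you propose, $\operatorname{Im}(\overline{y^{s}}x^{2})$, vanishes wherever $x^{2}=\pm y^{s}$, so at four points per fiber. On the extra pair $x^{2}=-y^{s}$ one has $R_y(x)=\bar x y^{s}=-x$. These two zeros therefore form a $2$-cycle of $R_y$ that the flow fixes, which is a period-$2$ orbit of $h_y\circ R_y$.

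Suppose you instead take a field vanishing only at $x^{2}=y^{s}$, e.g.\ $1-\operatorname{Re}(\overline{y^{s}}x^{2})$. It has constant sign, so the orientation-reversing $R_y$ conjugates its flow to the inverse flow. Then both $h_yR_y$ and your fallback $h_yR_yh_y$ square to the identity. What you actually need is a field commuting with $R_y$ that has a sink at one fixed point of $R_y$ and a source at the other.

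For $s$ even, $\operatorname{Im}(\overline{y^{s/2}}x)$ does this; equivalently, conjugate by $(x,y)\mapsto(xy^{s/2},y)$ to reduce to $s=0$. Then $g^2=h_{2\varepsilon}$ and $Per(g)=\{1\}$. For $s$ odd, the two fixed points $\pm y^{s/2}$ of $R_y$ are exchanged by monodromy as $y$ runs once around the base, so no continuous choice of sink and source exists.

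For odd $s$ this obstruction is genuine: period $2$ cannot be removed at all. Take any $g\sim_{S^1}f_{-1,s}$ and write $g_\tau$ for its restriction to the fiber over $e^{2\pi i\tau}$. Choose lifts $G_\tau$ with $G_\tau(\theta+1)=G_\tau(\theta)-1$ and $G_1=G_0+s$. The fixed points of $g_\tau$ form two Nielsen classes,
$A_\tau=\{G_\tau(\theta)-\theta\in 2\mathbb{Z}\}$ and $B_\tau=\{G_\tau(\theta)-\theta\in 2\mathbb{Z}+1\}$, each of index $1$. Since $s$ is odd, $A_1=B_0$.

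Suppose $g$ had no point of minimal period $2$. Then $Fix(g_\tau^2)=A_\tau\sqcup B_\tau$ for every $\tau$, so $i(g_\tau^2,A_\tau)$ is independent of $\tau$ by homotopy invariance of the index. Because $g_1=g_0$ and $A_1=B_0$, this gives $i(g_0^2,A_0)=i(g_0^2,B_0)$. But $i(g_0^2,A_0)+i(g_0^2,B_0)=L(g_0^2)=0$, since $g_0^2$ has degree $1$, so $i(g_0^2,A_0)=0$. Dold's congruence, applied on a neighbourhood of $A_0$ whose closure misses $B_0$, instead gives $i(g_0^2,A_0)\equiv i(g_0,A_0)=1 \pmod 2$, a contradiction.

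Hence for odd $s$ every representative has points of minimal period $2$, and together with your step (a) this gives $H_{S^1}Per(f)=\{1,2\}$. The statement, and your argument, are correct only for even $s$. The paper's own proof does not close this gap either: it only observes $f_{-1,s}^{2}=\mathrm{Id}$, which, like your step (a), rules out $n\ge 3$ but says nothing about removing $n=2$.
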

\begin{proof}
By Proposition \ref{prop-nielsen-fn} we have
$$ f^{n}_{-1,s} = \left\{
\begin{array}{lll}
	f_{1,0} =  Id & if &\hbox{$n$ is even} \\	
	f_{-1,s} & if & \hbox{$n$ is odd} \\	
\end{array}	\right. $$	
Thus we obtain $H_{S^{1}}Per(f) = H_{S^{1}}Per(f_{-1,s}) = \{1\}.$ 	
\end{proof}

\begin{remark}
The computation of $H_{S^{1}}Per(f_{-2,s})$ follows from the following propositions, which were proved for all $r < -1,$  and Proposition \ref{prop-hperf}. 
The computation was split into three cases; $r < -1$ and $n $ odd, $r< -1$ and $n=2l$  but $4$ does not divide $n,$ and the last $r< -1$ and $4$ divides $n.$
\end{remark}

\begin{proposition}\label{proposition-An>0,2}
Let $f: \mathbb{T} \to \mathbb{T}$ be a fiber map over $S^{1}$ such that $f \sim_{S^{1}} f_{r,s}$ with  $r < - 1$ and $n$  odd. Then $A_{n}(f)>0$.	
\end{proposition}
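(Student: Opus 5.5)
The plan is to express $A_n(f)$ through Theorem \ref{theorem-An} and the formula $N_{S^{1}}(f^{m}) = |\sigma(m,r)|\,g$ from Proposition \ref{prop-nielsen-fn}, where $g=\gcd\{r-1,s\}\neq 0$. The hypothesis $r<-1$ guarantees that $(r,s)\neq(1,0)$ and $(r,s,m)\neq(-1,0,2k)$ for all $m$, so this formula holds for every iterate.

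The key observation is that for odd $m$ and $r<-1$ the sum $\sigma(m,r)=(r^m-1)/(r-1)$ is positive, since both $r^m-1$ and $r-1$ are negative. Hence
$$N_{S^{1}}(f^{m}) = \frac{g\,(r^{m}-1)}{r-1} = \frac{g\,(1-r^{m})}{1-r}.$$
Since $n$ is odd, every $m=p_1^{k_1}\cdots p_t^{k_t}$ appearing in Theorem \ref{theorem-An} divides $n$ and is therefore odd. So the absolute values can be dropped uniformly.

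Substituting into the alternating Möbius-type sum of Theorem \ref{theorem-An}, the constant term $1$ contributes $\prod_j(1-1)=0$, because $t\geq 1$. For $n>1$ this gives
$$A_n(f)=\frac{g}{r-1}\sum_{d\mid n}\mu(n/d)\,r^{d}.$$
For $n=1$ we have directly $A_1=N_{S^1}(f)=g>0$. It then remains to show that $\sum_{d\mid n}\mu(n/d)r^d<0$ for odd $n>1$, because $r-1<0$.

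The dominant term is $r^n$, which is negative since $n$ is odd and $r<0$, and $|r^n|=|r|^n$. The remaining terms are bounded by
$$\sum_{d\mid n,\,d<n}|r|^d\leq\sum_{d\leq n/3}|r|^d<\frac{|r|^{n/3+1}}{|r|-1}\leq |r|^{n/3+1},$$
using that the largest proper divisor of an odd $n$ is at most $n/3$ and that $|r|\geq 2$. Since $n/3+1<n$ for $n\geq 3$ and $|r|\ge2$, this bound is strictly less than $|r|^n$. Therefore the sum has the sign of $r^n$, namely negative, and $A_n(f)>0$.

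The main obstacle is only bookkeeping: checking that the constant terms cancel and that the sign conventions after removing the absolute values are handled correctly. The estimate itself is routine.
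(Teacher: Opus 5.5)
Your proof is correct, but it takes a different route from the paper's.

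\textbf{The paper's route.} It uses Proposition \ref{prop-n-m} to write each $N_{S^{1}}(f^{d})$ as $N_{S^{1}}(f^{m})(1-r^{d})/(1-r^{m})$ with $m=n/(p_1\cdots p_l)$. It then bounds $N_{S^{1}}(f^{n})-\sum_j N_{S^{1}}(f^{n/p_j})$ from below by replacing every subtracted term with the largest one, $N_{S^{1}}(f^{n/p_1})$. After that it asserts ``by the same idea'' that, in the alternating sum of Theorem \ref{theorem-An}, each block with $2t$ primes removed dominates the block with $2t+1$ primes removed, and concludes by grouping. Those higher-level comparisons are only asserted, not carried out.

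\textbf{Your route.} You remove all absolute values at once: every divisor of $n$ is odd, so $\sigma(d,r)>0$. You cancel the constant part with $\sum_{d\mid n}\mu(n/d)=0$. This reduces everything to the sign of $\sum_{d\mid n}\mu(n/d)r^{d}$, and a single geometric-series estimate settles it.

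\textbf{What your route buys.} It is shorter and complete. It also yields the closed form $A_n(f)=\frac{g}{r-1}\sum_{d\mid n}\mu(n/d)r^{d}$ for odd $n>1$.

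It extends beyond odd $n$ as well. For arbitrary $n$ and $r<-1$ one has $N_{S^{1}}(f^{d})=g\,(|r|^{d}-(-1)^{d})/(|r|+1)$, so
\[
A_n(f)=\frac{g}{|r|+1}\Big(\sum_{d\mid n}\mu(n/d)|r|^{d}-\sum_{d\mid n}\mu(n/d)(-1)^{d}\Big).
\]
The second sum vanishes for every $n\geq 3$. The same estimate, with $n/2$ in place of $n/3$, then gives $A_n(f)>0$ for all $n\geq 3$. For $n=2$ the leftover constant yields $A_2(f)=g(|r|-2)$, which is exactly the exception $(n,r)=(2,-2)$. So your method would replace the paper's three separate parity cases for $r<-1$ with a single argument.
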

\begin{proof}
	Let $n=p_{1}^{\alpha_{1}}.p_{2}^{\alpha_{2}}\dots p_{l}^{\alpha_{l}}$ and $m=p_{1}^{\alpha_{1}-1}.p_{2}^{\alpha_{2}-1}\dots p_{l}^{\alpha_{l}-1}$ be its prime factorization, such that $3\leq p_{1}<p_{2}<\dots<p_{l}$. By Proposition \ref{prop-n-m}, we have 
	$$ N_{S^{1}}(f^{n})=N_{S^{1}}(f^{m}).\left(\dfrac{1-r^{n}}{1-r^{m}}\right) \ and \  N_{S^{1}}(f^{\frac{n}{p_{j}}})=N_{S^{1}}(f^{m}).\left(\dfrac{1-r^{\frac{n}{p_{j}}}}{1-r^{m}}\right), \ for \ all \ j=1,2,\dots l.$$ 
	Then,
	$$\begin{array}{ccl}
		N_{S^{1}}(f^{n}) - \displaystyle \sum_{1  \leq j \leq l} N_{S^{1}}(f^{\frac{n}{p_{j}}}) & = & N_{S^{1}}(f^{m}).\left(\dfrac{1-r^{n}}{1-r^{m}}\right)- \displaystyle\sum_{1  \leq j \leq l}N_{S^{1}}(f^{m}).\left(\dfrac{1-r^{\frac{n}{p_{j}}}}{1-r^{m}}\right)\\
		& \geq & N_{S^{1}}(f^{m}).\left(\dfrac{1-r^{n}}{1-r^{m}}\right)-l.N_{S^{1}}(f^{m}).\left(\dfrac{1-r^{\frac{n}{p_{1}}}}{1-r^{m}}\right)\\
		& = & \dfrac{N_{S^{1}}(f^{m})}{1-r^{m}}.\left((1-r^{n})-l.(1-r^{\frac{n}{p_{1}}})\right)\\
		& = & \dfrac{N_{S^{1}}(f^{m})}{1-r^{m}}.\left(-r^{\frac{n}{p_{1}}}.(r^{m.(p_{1}-1).p_{2}\dots p_{l}}-l)-l+1\right)>0.
	\end{array}$$
	By the same idea, we have
	$$\displaystyle N_{S^{1}}(f^{\frac{n}{p_{1} . p_{2}}}) - \sum_{ 3  \leq i \leq l} N_{S^{1}}(f^{\frac{n}{p_1 . p_2 . p_i}}) > 0 ,$$
	and, by induction, we have
	$$\displaystyle \sum_{1  \leq i < j \leq l} N_{S^{1}}(f^{\frac{n}{p_{i}.p_{j}}}) - \displaystyle \sum_{1  \leq i < j < k \leq l} N_{S^{1}}(f^{\frac{n}{p_{i}.p_{j}.p_{k}}}) > 0.$$
	By the same idea, for $2t+1\leq n$, we have
	$$\displaystyle \sum_{\substack{\alpha_{j}-1 \leq k_j \leq \alpha_j \\ 1  \leq j \leq l\\ (\alpha_1+\alpha_2+\cdots+\alpha_l)-(k_1+k_2+\cdots+k_l)= 2t}} N_{S^{1}}(f^{p_1^{k_1}p_2^{k_2}\cdots p_l^{k_l}}) - \displaystyle \sum_{\substack{\alpha_{j}-1 \leq k_j \leq \alpha_j \\ 1  \leq j \leq l\\ (\alpha_1+\alpha_2+\cdots+\alpha_l)-(k_1+k_2+\cdots+k_l)=2t+1}} N_{S^{1}}(f^{p_1^{k_1}p_2^{k_2}\cdots p_l^{k_l}}) > 0.$$
	By Theorem \ref{theorem-An} we obtain $A_{n}(f)>0.$
\end{proof}

\begin{proposition}\label{proposition-An>0,3}
Let $f: \mathbb{T} \to \mathbb{T}$ be a fiber map over $S^{1}$ such that $f \sim_{S^{1}} f_{r,s}$ with  $r < - 1.$ Let $n=2.p_{2}^{\alpha_{2}}\dots p_{l}^{\alpha_{l}}$ is its prime factorization, such that $3\leq p_{2}<\dots<p_{l}.$ Then $A_{n}(f)>0$ if and only if $(n,r)\neq (2,-2).$	
\end{proposition}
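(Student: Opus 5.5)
The plan is to drop the alternating estimates of Proposition \ref{proposition-An>0,2} and compute $A_n(f)$ exactly by Möbius inversion, since $n=2m$ with $m$ odd has a very simple divisor structure. First, by the definition of $A_n$ we have $N_{S^{1}}(f^{n})=\sum_{k\mid n}A_k(f)$ for every $n$, so Möbius inversion gives
$$A_n(f)=\sum_{d\mid n}\mu(n/d)\,N_{S^{1}}(f^{d}),$$
which is the content of Theorem \ref{theorem-An}. Since $r<-1$, Proposition \ref{prop-nielsen-fn} applies for every $d$ and gives $N_{S^{1}}(f^{d})=|\sigma(d,r)|\,g$ with $g=\gcd\{r-1,s\}\neq 0$.

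Write $r=-q$ with $q\geq 2$. Then $\sigma(d,r)=\frac{r^{d}-1}{r-1}$, so $|\sigma(d,r)|=\frac{q^{d}-(-1)^{d}}{q+1}$ in both parities of $d$. Hence
$$A_n(f)=\frac{g}{q+1}\Big(\sum_{d\mid n}\mu(n/d)q^{d}-\sum_{d\mid n}\mu(n/d)(-1)^{d}\Big).$$

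Next I evaluate the sign sum for $n=2m$ with $m$ odd. The divisors of $n$ are $d=e$ and $d=2e$ with $e\mid m$. We have $\mu(2m/e)=-\mu(m/e)$ because $m/e$ is odd. Therefore
$$\sum_{d\mid n}\mu(n/d)(-1)^{d}=\sum_{e\mid m}\big(\mu(m/e)\cdot 1-\mu(m/e)\cdot(-1)\big)=2\sum_{e\mid m}\mu(m/e),$$
which equals $2$ if $m=1$ and $0$ otherwise.

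Case $m=1$, i.e.\ $n=2$. Here $A_2(f)=\frac{g}{q+1}(q^{2}-q-2)=g(q-2)$. This vanishes exactly when $q=2$, i.e.\ $r=-2$, and is positive for $r\leq -3$. This is precisely the exceptional pair $(n,r)=(2,-2)$.

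Case $m>1$, so $n\geq 6$. Then $A_n(f)=\frac{g}{q+1}\sum_{d\mid n}\mu(n/d)q^{d}$, and it remains to show this necklace-type sum is positive for $q\geq 2$. Every proper divisor of $n$ is at most $n/2$, so the sum is at least
$$q^{n}-\sum_{d=1}^{n/2}q^{d}\;\geq\; q^{n}-(q^{n/2+1}-1)\;>\;0,$$
using $\sum_{d=1}^{k}q^{d}\leq q^{k+1}-1$ for $q\geq 2$ and $n/2+1\leq n$. Thus $A_n(f)>0$ whenever $(n,r)\neq(2,-2)$, which proves both directions of the equivalence.

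The main obstacle I expect is bookkeeping rather than substance: getting the sign of $|\sigma(d,r)|$ right for negative $r$, and carefully checking that $\mu(2m/e)=-\mu(m/e)$. Once the parity term is shown to collapse to $2[m=1]$, the statement reduces to the explicit value $g(q-2)$ and a crude geometric-sum bound.
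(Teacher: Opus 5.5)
Your proof is correct, and it takes a genuinely different route from the paper's. The paper works with the inclusion--exclusion expansion of Theorem~\ref{theorem-An} and argues by inequalities. It first checks $(n,r)=(2,-2)$ directly via $N_{S^{1}}(f^{2}_{-2,s})=N_{S^{1}}(f_{-2,s})$. It then uses Proposition~\ref{prop-n-m} to write $N_{S^{1}}(f^{n})$ and $N_{S^{1}}(f^{n/2})$ as multiples of $N_{S^{1}}(f^{m})$, and bounds the first-level sum $\sum_{j}N_{S^{1}}(f^{n/p_j})$ by $l$ times its largest term. Finally it asserts ``by the same idea'' that every later pair of levels (divisors with $2t$ versus $2t+1$ primes removed) also has positive difference. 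You instead compute $A_n(f)$ in closed form. Writing $|\sigma(d,r)|=\frac{q^{d}-(-1)^{d}}{q+1}$ separates a geometric part from the M\"obius-weighted parity sum $\sum_{d\mid n}\mu(n/d)(-1)^{d}$. For $n\equiv 2 \pmod 4$ that parity sum is $2$ when $n=2$ and $0$ otherwise. Hence $A_2(f)=g(q-2)$, and for $n\geq 6$ you get $A_n(f)=\frac{g}{q+1}\sum_{d\mid n}\mu(n/d)q^{d}$, which is positive by a one-line geometric bound. Your route is fully rigorous where the paper's higher-level pairing is only sketched; that step needs its own domination argument, since a level with $2t+1$ primes removed may contain more terms than the level before it. Your computation also shows exactly why $(2,-2)$ is the only exception. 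It even settles the companion propositions at once: for odd $n$ the parity sum is $-1$ if $n=1$ and $0$ otherwise, and it is $0$ whenever $4\mid n$. The paper's approach, for its part, needs only monotonicity of $N_{S^{1}}(f^{d})$ in $d$ and the divisibility relation of Proposition~\ref{prop-n-m}, not the explicit formula for $|\sigma(d,r)|$.
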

\begin{proof}
	Firstly, we suppose $(n,r)=(2,-2).$ Then $N_{S^{1}}(f^{2}_{-2,s})=N_{S^{1}}(f_{-2,s})$ and therefore  $A_{2}(f)=N_{S^{1}}(f^{2}_{-2,s})-N_{S^{1}}(f_{-2,s})=0.$
	
	On the other hand, if $(n,r)\neq (2,-2)$ let $p_{1}=2$ and $m=p_{2}^{\alpha_{2}-1}\dots p_{l}^{\alpha_{l}-1}$ be its prime factorization, such that $3\leq p_{2}<\dots<p_{l}$. By Proposition \ref{prop-n-m}, we have 
	$$ N_{S^{1}}(f^{n})=N_{S^{1}}(f^{m}).\left(\dfrac{r^{n}-1}{1-r^{m}}\right) \ and \  N_{S^{1}}(f^{\frac{n}{2}})=N_{S^{1}}(f^{m}).\left(\dfrac{1-r^{\frac{n}{2}}}{1-r^{m}}\right).$$ 
	Then,
	$$\begin{array}{ccl}
		N_{S^{1}}(f^{n}) - \displaystyle \sum_{1  \leq j \leq l} N_{S^{1}}(f^{\frac{n}{p_{j}}}) & \geq & N_{S^{1}}(f^{m}).\left(\dfrac{r^{n}-1}{1-r^{m}}\right)-l.N_{S^{1}}(f^{m}).\left(\dfrac{1-r^{\frac{n}{2}}}{1-r^{m}}\right)\\
		& = & \dfrac{N_{S^{1}}(f^{m})}{1-r^{m}}.\left((r^{n}-1)-l.(1-r^{\frac{n}{2}})\right)\\
		& = & \dfrac{N_{S^{1}}(f^{m})}{1-r^{m}}.\left(r^{\frac{n}{2}}.(r^{m.p_{2}\dots p_{l}}+l)-l-1\right)>0.
	\end{array}$$
	By the same idea, we have
	$$\displaystyle N_{S^{1}}(f^{\frac{n}{p_{1} . p_{2}}}) - \sum_{ 3  \leq i \leq l} N_{S^{1}}(f^{\frac{n}{p_1 . p_2 . p_i}}) > 0 ,$$
	and, by induction, we have
	$$\displaystyle \sum_{1  \leq i < j \leq l} N_{S^{1}}(f^{\frac{n}{p_{i}.p_{j}}}) - \displaystyle \sum_{1  \leq i < j < k \leq l} N_{S^{1}}(f^{\frac{n}{p_{i}.p_{j}.p_{k}}}) > 0.$$
	By the same idea, for $2t+1\leq n$, we have
	$$\displaystyle \sum_{\substack{\alpha_{j}-1 \leq k_j \leq \alpha_j \\ 1  \leq j \leq l\\ (\alpha_1+\alpha_2+\cdots+\alpha_l)-(k_1+k_2+\cdots+k_l)= 2t}} N_{S^{1}}(f^{p_1^{k_1}p_2^{k_2}\cdots p_l^{k_l}}) - \displaystyle \sum_{\substack{\alpha_{j}-1 \leq k_j \leq \alpha_j \\ 1  \leq j \leq l\\ (\alpha_1+\alpha_2+\cdots+\alpha_l)-(k_1+k_2+\cdots+k_l)=2t+1}} N_{S^{1}}(f^{p_1^{k_1}p_2^{k_2}\cdots p_l^{k_l}}) > 0.$$
	From Theorem \ref{theorem-An} we have $A_{n}(f)>0.$
\end{proof}

\begin{proposition}
Let $f: \mathbb{T} \to \mathbb{T}$ be a fiber map over $S^{1}$ such that $f \sim_{S^{1}} f_{r,s}$ with $r < -1$ and $4|n.$ Then $A_{n}(f)>0.$	
\end{proposition}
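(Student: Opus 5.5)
We would follow the pattern of Propositions \ref{proposition-An>0,2} and \ref{proposition-An>0,3}, starting from the alternating sum of Theorem \ref{theorem-An}. Write $n=2^{\alpha_1}p_2^{\alpha_2}\cdots p_l^{\alpha_l}$ with $\alpha_1\geq 2$ and $3\leq p_2<\cdots<p_l$. Since $r<-1$ we have $r^k-1\neq 0$ for every $k\geq1$, so Proposition \ref{prop-nielsen-fn} gives $N_{S^1}(f^k)=|\sigma(k,r)|\,N_{S^1}(f)$ with $N_{S^1}(f)=\gcd\{r-1,s\}\neq0$. As $|\sigma(k,r)|=|r^k-1|/|r-1|$, Theorem \ref{theorem-An} becomes
$$A_n(f)=\frac{N_{S^1}(f)}{|r-1|}\sum_{d}\mu(n/d)\,|r^{d}-1|,$$
where $d$ runs over the divisors $d=p_1^{k_1}\cdots p_l^{k_l}$ with $\alpha_j-1\leq k_j\leq\alpha_j$.

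The key observation is that $4\mid n$ removes the sign difficulty. Every divisor $d$ in the sum is a multiple of $2^{\alpha_1-1}$, which is even because $\alpha_1\geq2$. Hence $r^d>0$ and $|r^d-1|=r^d-1=|r|^d-1$. The constant terms cancel, because $\sum_{d}\mu(n/d)=0$ whenever $n>1$. So up to the positive factor $N_{S^1}(f)/|r-1|$, $A_n(f)$ equals $\sum_{d\mid n}\mu(n/d)\,q^{d}$ with $q=|r|\geq2$. This is exactly the necklace count: $n$ times the number of primitive words of length $n$ over a $q$-letter alphabet. Equivalently, it is the number of points of least period $n$ of $z\mapsto z^q$ on $S^1$, up to sign conventions.

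It remains to show $\sum_{d\mid n}\mu(n/d)q^d>0$. The top term is $q^n$. Every other nonzero term has absolute value at most $q^{n/2}$, and there are at most $n$ of them, so their total is bounded by $\sum_{k\leq n/2}q^k<q^{n/2+1}\leq q^n$ for $n\geq 2$. This gives the strict positivity. Alternatively, one could follow the paper's telescoping layering: group the terms by $(\sum\alpha_j)-(\sum k_j)$ and show that each even layer dominates the next odd layer, as in the previous two propositions. The direct bound is cleaner, though.

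The main obstacle is justifying that every divisor appearing in Theorem \ref{theorem-An} is even. This is where $4\mid n$ is used: with $\alpha_1=1$, the divisor $n/2$ could be odd and $|r^{n/2}-1|=|r|^{n/2}+1$, which breaks the cancellation and produces the exceptional case $(n,r)=(2,-2)$. Once this parity point and the bound on the Möbius sum are in place, Proposition \ref{prop-hperf} gives $n\in H_{S^1}Per(f)$ as a consequence.
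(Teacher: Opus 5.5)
Your proof is correct, and it takes a different route from the paper's.

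The paper handles this case in two lines by pointing back to Proposition \ref{proposition-An>0,2}. Since $\alpha_1\geq 2$, the integer $m=p_1^{\alpha_1-1}\cdots p_l^{\alpha_l-1}$ is even, so $r^m>0$ and the sign bookkeeping of the odd case goes through. The paper then repeats that layer-by-layer scheme: express each $N_{S^1}(f^{d})$ as a multiple of $N_{S^1}(f^{m})$ via Proposition \ref{prop-n-m}, show $N_{S^1}(f^n)$ exceeds the first layer $\sum_j N_{S^1}(f^{n/p_j})$, and assert ``by the same idea'' that each even layer of the alternating sum in Theorem \ref{theorem-An} dominates the next odd layer.

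You use the same parity fact, but globally. Every $d$ in the sum is even, so $|r^d-1|=|r|^d-1$ uniformly, and the constants cancel because $\sum_{d\mid n}\mu(n/d)=0$. This gives $A_n(f)=\frac{\gcd\{r-1,s\}}{|r-1|}\sum_{d\mid n}\mu(n/d)|r|^d$. Positivity then needs only one estimate: $|r|^n$ beats the geometric sum of $|r|^d$ over the distinct proper divisors $d\leq n/2$. What this buys you is an exact closed form and a complete proof; the paper's higher-layer inequalities are only sketched.

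Your computation also reaches further than you say. For $n$ odd, all $d$ are odd and the $+1$ constants cancel in the same way. For $n=2m$ with $m$ odd, the constants contribute $-2\sum_{d\mid m}\mu(m/d)$, which vanishes unless $m=1$. So the same argument also yields Propositions \ref{proposition-An>0,2} and \ref{proposition-An>0,3}. It also means your closing remark that $\alpha_1=1$ ``breaks the cancellation'' is accurate only for $n=2$, where $A_2(f)$ is a positive multiple of $(|r|-2)(|r|+1)$.

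Two side remarks contain harmless slips. First, $\sum_{d\mid n}\mu(n/d)q^d$ is the number of primitive words of length $n$, that is, $n$ times the number of Lyndon words, not $n$ times the number of primitive words. Second, the phrase ``at most $n$ of them'' plays no role in your bound.
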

\begin{proof} Following the proof of Proposition \ref{proposition-An>0,2} we have $p_{1}=2$, $\alpha_{1}\geq 2$, $2|m$ and $r^{m.(2-1).p_{2}\dots p_{s}}>0$. Thus, the proof follows analogous to the proof of Proposition \ref{proposition-An>0,2}.
\end{proof}

The proof of Theorem \ref{main-result} follows from the above results.

%The Theorem \ref{main-result} follows by above results. From the above %computations we also have:
%\begin{theorem}
%Let $f: T \to T$ be a fiber-preserving map such that $f \simeq_{S^{1}} %f_{r,s}$ where $(r,s)\neq(1,0)$ and $(r,s,n)\neq(-1,0,2k)$, $k\in\n.$ Then 
%$m \notin H_{S^{1}}Per(f)$ if and only if $C(f^{m}) = C(f^{\frac{m}{p}})$ %for some prime factor $p$ of $m.$
%\end{theorem}

%Let $d_{0}=0<d_{1}=1<d_{2}<\dots<d_{l}<d_{l+1}=n$ be the list of divisors %of $n$. Then, 
%$$\begin{array}{ccl}
%A_{n} & = & C(f^{n}_{r,s}) - \displaystyle{\sum_{i=1}^{l} %C(f^{d_{i}}_{r,s})} = C(f_{r,s}).\Big( \displaystyle{\sum_{i=0}^{n-1} %r^{i}} - \displaystyle{\sum_{j=1}^{l} \sum_{j=0}^{d_{j-1}}r^{k}}\Big)\\
%& & \\
%& = & C(f_{r,s}).\Big( (1-l)+(2-l).\displaystyle{\sum_{i=d_{1}}^{d_{2}-1} %r^{i}}+\dots+(-1)\displaystyle{\sum_{i=d_{l-2}}^{d_{l-1}-1} %r^{i}}+(0)\displaystyle{\sum_{i=d_{l-1}}^{d_{l}-1} %r^{i}}+\displaystyle{\sum_{i=d_{l}}^{n-1} r^{i}}\Big)\\
%& = & %C(f_{r,s}).\displaystyle{\sum_{j=1}^{l+1}(j-l).\sum_{i=d_{j-1}}^{d_{j}-1} %r^{i}}.
%\end{array}$$

%%%%%%%%%%%%%%%%%%%%%%%%%%%%%%%%%%%%%%%%%%%%%%%%%%%%%%%%%%%%%%%%%%%%%

%%%%%%%%%%%%%%%%%%%%%%%%%%%%%%%%%%%%%%%%%%%%%%%%%%%%%%

\section{The Klein bottle case}

In this section we will proof the Theorem \ref{main-result-2}.

Let $\mathbb{K} = \dfrac{S^{1} \times I}{(z,0) \sim (\overline{z},1)}$ be the Klein bottle, where $S^{1}$ is considered as a subset of the complex numbers. 
If $f: \mathbb{K} \to \mathbb{K}$ is a fiber map over $S^{1}$ then $p_2 \circ f = p_2,$ where $p_2 : \mathbb{K} \to S^{1}$ is given by $p_2(<z,t>) = [t].$
\begin{equation} 
\xymatrix{  \mathbb{K} \ar[rr]^-{f} \ar[dr]_-{p_2}    &     &  \mathbb{K} \ar[dl]^-{p_2}  \\
& S^{1}  &  }
\end{equation}
Here we are considering $S^{1} = \dfrac{[0,1]}{0 \sim 1}.$ We have that $f$ is given by $f(<z,t>) =<F(z,t), t>$ for some map $F: \mathbb{K} \to S^{1}.$ 
For each pair of integers $(r,s)$ let $f_{r,s}: \mathbb{K} \to \mathbb{K}$ be the fiber map over $S^{1}$ defined by $$f_{r,s}(<z,t>) = <z^{r}[t]^{s},t>$$
where $[t]$ denotes the image of $t$ under the natural projection $[0,1] \to S^{1} = \dfrac{[0,1]}{0 \sim 1}.$ 

\begin{proposition} \cite[Proposition 3.6]{G-L-V-V}
If $f: \mathbb{K} \to \mathbb{K}$ is a fiber map over $S^{1}$ then $f$ is homotopic over $S^{1}$ to the map $f_{r,s}$ for some integer $r$ and some $s \in \{0,1\}.$
\end{proposition}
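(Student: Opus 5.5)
This is a classification of fiberwise homotopy classes, and I would prove it by lifting. First write $f(<z,t>)=<F(z,t),t>$, where $F: S^1\times I\to S^1$ satisfies the compatibility condition $F(\overline{z},1)=\overline{F(z,0)}$; this is forced by $(z,0)\sim(\overline z,1)$ together with the fact that $f$ preserves $t$. Let $r$ be the degree of $z\mapsto F(z,0)$. Since $F$ is a homotopy, $F(\cdot,t)$ has degree $r$ for every $t$.

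The first step is to build a fiberwise homotopy between $f$ and some $f_{r,s}$. Consider the quotient map $G(z,t)=F(z,t)\,\overline{z^{r}}$, which has degree $0$ in $z$. It therefore lifts to a real-valued function: $G(z,t)=e^{2\pi i\phi(z,t)}$ with $\phi: S^1\times I\to\mathbb R$ continuous.

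Next I translate the compatibility condition into a condition on $\phi$. Using $\overline{\overline z^{\,r}}=z^{r}$, the condition gives $G(\overline z,1)=\overline{G(z,0)}$, that is,
$$\phi(\overline z,1)=-\phi(z,0)+c$$
for some constant integer $c$. Only the parity of $c$ matters. If we replace $\phi$ by $\phi+k$, the integer $c$ changes by $2k$. Hence we may normalise so that $c=s\in\{0,1\}$.

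Now deform linearly, $\phi_\lambda=(1-\lambda)\phi+\lambda st$, for $\lambda\in[0,1]$. I check that $\phi_\lambda$ still satisfies the relation with the same constant $s$:
$$\phi_\lambda(\overline z,1)+\phi_\lambda(z,0)=(1-\lambda)s+\lambda s=s.$$
Define $F_\lambda(z,t)=z^{r}e^{2\pi i\phi_\lambda(z,t)}$. Each $F_\lambda$ satisfies the compatibility condition, because
$$\overline{z}^{\,r}e^{2\pi i(s-\phi_\lambda(z,0))}=\overline{z^{r}e^{2\pi i\phi_\lambda(z,0)}}$$
(using $e^{2\pi is}=1$). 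So $<z,t>\mapsto<F_\lambda(z,t),t>$ is a well-defined homotopy over $S^1$ on $\mathbb K$. At $\lambda=1$ it gives $z^{r}e^{2\pi ist}=z^r[t]^s$, which is exactly $f_{r,s}$.

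The main obstacle is the normalisation step. I have to confirm three things: that $c$ is a genuine integer independent of $z$ (this follows by continuity, since it is an integer-valued function on the connected space $S^1$); that the convention $[t]^s=e^{2\pi ist}$ matches the sign in the gluing relation; and that shifting $\phi$ by an integer really lets $c$ be reduced modulo $2$. I would also note that $f_{r,1}$ is well defined on $\mathbb K$ by the same check: $\overline z^{\,r}e^{2\pi i}=\overline{z^r}$.
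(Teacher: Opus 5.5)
Your proof is correct. The paper does not prove this proposition itself. It imports the normal form from \cite[Proposition 3.6]{G-L-V-V} and then only records the induced homomorphism $f_{\#}(\alpha)=\alpha^{r}$, $f_{\#}(\beta)=\alpha^{s}\beta$.

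Your argument is therefore an independent, self-contained route. You split off the fiber degree and lift the residual twist $F(z,t)\overline{z^{r}}$ to $\phi\colon S^{1}\times I\to\mathbb{R}$. You then encode the Klein bottle gluing in the single integer $c=\phi(\overline{z},1)+\phi(z,0)$, and use $\phi\mapsto\phi+k$ (which sends $c\mapsto c+2k$) to normalise $c$ to $s\in\{0,1\}$. You finish with a straight-line homotopy that visibly preserves the gluing relation. The shift $c\mapsto c+2k$ is the geometric counterpart of the algebraic fact behind the paper's presentation: conjugating by $\alpha^{k}$ fixes $\alpha^{r}$ and sends $\alpha^{s}\beta$ to $\alpha^{s+2k}\beta$, since $\beta\alpha^{-k}=\alpha^{k}\beta$.

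The three points you list as obstacles are all fine:
\begin{itemize}
\item $c$ is constant because $S^{1}$ is connected.
\item The sign convention for $[t]$ is immaterial, since $e^{-2\pi i t}$ would give $c=-1$, which also normalises to $s=1$.
\item The shift computation is exactly as you state.
\end{itemize}

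Two routine details deserve a sentence each in a written version.
\begin{itemize}
\item $F$ is continuous on all of $S^{1}\times I$, including near $t=0,1$. The map $(z,t)\mapsto(t,<z,t>)$ identifies $S^{1}\times I$ with the pullback of $p_2$ along $I\to S^{1}$. Let $q\colon S^{1}\times I\to\mathbb{K}$ be the quotient map; then $f\circ q$ covers $t\mapsto[t]$, so it factors through this pullback.
\item The map $(<z,t>,\lambda)\mapsto<F_{\lambda}(z,t),t>$ is continuous on $\mathbb{K}\times I$. This holds because $q\times\mathrm{id}_{I}$ is a continuous surjection from a compact space onto a Hausdorff space, hence a quotient map.
\end{itemize}

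Your method also gives more than the statement asks. Running the same lifting with a homotopy parameter shows that $r$ and the parity of $c$ are fiberwise homotopy invariants. Hence the pair $(r,s)$ is uniquely determined by $f$, which a bare citation does not make visible.
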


Let us consider $\pi_1(\mathbb{K}) = \langle \alpha, \beta \mid \alpha \beta \alpha \beta^{-1} = 1 \rangle.$ Since $p_2  = p_2 \circ f$ then $f_{\#}: \pi_1(\mathbb{K}) \to \pi_1(\mathbb{K})$ is given by 
$$f_{\#}(\alpha) = \alpha^{r} \,\,\,\,\, and \,\,\,\,\, f_{\#}(\beta) = \alpha^{s} \beta$$
for some $r, s \in \mathbb{Z}.$

\begin{proposition} \label{prop-n-k}
Given $f: \mathbb{K} \to \mathbb{K}$ a fiber map over $S^{1}$ and $n \in \mathbb{N}$ we have;
\begin{enumerate}[(1)]

\item $f^{n}_{\#}(\alpha) = \alpha^{r^{n}} \,\,\,\,\, and \,\,\,\,\, f^{n}_{\#}(\beta) = \alpha^{s \sigma(n,r)} \beta.$

\item For $r^{n} \neq 1$  we have 
$$C_{S^{1}}(f^{n}) = N_{S^{1}}(f^{n}) = R_{S^{1}}(f^{n}) = \left\{
\begin{array}{lll}
	\frac{|r^{n}-1|}{2} & if &\hbox{$r$ is odd and $s \sigma(n,r) = 1$} \\	
	\left[\frac{|r^{n}-1|}{2} \right] + 1 &  & else \\	
\end{array}	\right. $$	
For $r^{n} = 1$  we have 
$$ \infty =  R_{S^{1}}(f^{n}) \neq C_{S^{1}}(f^{n}) = N_{S^{1}}(f^{n}) =   \left\{
\begin{array}{lll}
	0 & if & s \sigma(n,r) = 1  \\	
	1 & if & s \sigma(n,r) = 0 \\	
\end{array}	\right. $$

\item If $g_{r,s}: \mathbb{K} \to \mathbb{K}$ is given by $g_{r,s}(<z,t>) = <z^{r} exp(2\pi i st), t>,$ for $s \in \{0,1\}$ then $g_{r,s} \sim_{S^{1}} f$ and 
$$ N_{S^{1}}(g_{r,s}) = \# \{\pi_0(Fix(g_{r,s})) \}$$

%\item $$ N_{S^{1}}(g^{n}_{r,s}) = \# \{\pi_0(Fix(g^{n}_{r,s})) \}$$	
%for some $r  ?????$

%\item $H_{S^{1}}P_n(f)= \#\pi_0(P_n(g_{r,s})).$
	
\end{enumerate}
\end{proposition}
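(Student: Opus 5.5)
The plan is to prove item (1) by induction on $n$, using only the relation $\alpha\beta\alpha\beta^{-1}=1$, i.e. $\beta\alpha\beta^{-1}=\alpha^{-1}$. Item (2) will then follow by applying Theorem \ref{nielsen-number-s1}(2) to the iterate $f^n$. Item (3) is a direct computation with an explicit model.

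\textbf{Item (1).} The case $n=1$ is the given description of $f_{\#}$. Assume $f^{n}_{\#}(\alpha)=\alpha^{r^n}$ and $f^{n}_{\#}(\beta)=\alpha^{s\sigma(n,r)}\beta$. Then
$$f^{n+1}_{\#}(\alpha)=f_{\#}(\alpha^{r^n})=\alpha^{r^{n+1}},$$
$$f^{n+1}_{\#}(\beta)=f_{\#}(\alpha)^{s\sigma(n,r)}f_{\#}(\beta)=\alpha^{rs\sigma(n,r)}\alpha^{s}\beta=\alpha^{s(r\sigma(n,r)+1)}\beta=\alpha^{s\sigma(n+1,r)}\beta .$$
No commutation is actually needed here, since all powers of $\alpha$ sit to the left of $\beta$. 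One point deserves care: $f_\#$ is only defined up to a choice of base path and conjugation, so we should fix a base point on a fixed circle of $f_{r,s}$ (for instance $\langle 1,0\rangle$) so that the formula is genuinely a homomorphism that can be iterated.

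\textbf{Item (2).} $f^n$ is again a fiber map over $S^1$. By item (1) its induced homomorphism is that of $f_{r^n,\,s\sigma(n,r)}$, so $f^n\sim_{S^1} f_{r^n,\,s\sigma(n,r)}$; here we use that the fiberwise class is determined by the induced data, as in the classification of \cite[Proposition 3.6]{G-L-V-V}. The second index $s\sigma(n,r)$ need not lie in $\{0,1\}$. We reduce it modulo $2$, which is legitimate because twisting by $\alpha^{2}$ can be absorbed via a fiberwise homotopy using the relation $\beta\alpha\beta^{-1}=\alpha^{-1}$. This reduction is the main obstacle, and it explains why the statement reads ``$s\sigma(n,r)=1$'' or ``$=0$'' as a condition on parity. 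With $s\sigma(n,r)$ read mod $2$, Theorem \ref{nielsen-number-s1}(2) applied with $r^n$ in place of $r$ gives both cases verbatim. Note also that $r^n$ is odd exactly when $r$ is odd, so the parity condition on $r$ transfers unchanged.

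\textbf{Item (3).} First, $g_{r,s}$ is well defined on $\mathbb K$: at $t=1$ we have $\exp(2\pi i s)=1$ and $(\bar z)^r=\overline{z^r}$, matching $(z,0)\sim(\bar z,1)$. It is fiberwise homotopic to $f_{r,s}$ via the fiberwise homotopy $z^r\exp(2\pi i s\,\theta(t))$ interpolating $[t]^s$ and $\exp(2\pi i st)$ in the chosen model of $S^1$. Alternatively, both maps induce the same $f_\#$, and we conclude as in item (2).

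For the count, we solve $z^{r-1}\exp(2\pi i st)=1$ for each $t$. For $r\neq1$ this has $|r-1|$ solutions $z=\zeta\exp(-2\pi i st/(r-1))$, tracing arcs as $t$ runs over $[0,1]$. The gluing $z\mapsto\bar z$ at $t=1$ pairs these arcs into closed components, and we count the resulting orbits of the combined rotation/conjugation permutation of the $(r-1)$-th roots of unity. For $r$ even this yields $[|r-1|/2]+1$ components. For $r$ odd it yields $|r-1|/2+1$ components when $s=0$ (the real roots $\pm1$ are fixed) and $|r-1|/2$ when $s=1$, matching Corollary \ref{corol-nielsen-s1}. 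When $r=1$ the fixed set is all of $\mathbb K$ for $s=0$ (one component) and empty for $s=1$, again matching.
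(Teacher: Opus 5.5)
\textbf{Gap in item (3) at $(r,s)=(1,1)$.} You claim $Fix(g_{1,1})=\varnothing$, but this is false. The map $g_{1,1}(\langle z,t\rangle)=\langle z\,e^{2\pi i t},t\rangle$ fixes every point with $t=0$ (equivalently $t=1$). So $Fix(g_{1,1})=p_2^{-1}([0])$ is a circle and $\#\pi_0(Fix(g_{1,1}))=1$, whereas $N_{S^1}(g_{1,1})=N_{S^1}(f_{1,1})=0$ by Theorem \ref{nielsen-number-s1}(2). The equality in (3) is therefore false in this case, and no argument can establish it. The statement must exclude $(1,1)$; there a minimal representative is the fixed point free map $\langle z,t\rangle\mapsto\langle -z,t\rangle$ that the paper uses later.

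For all other $(r,s)$ your direct count is correct, and it is a reasonable substitute for the paper's bare citation. The arcs $z=\zeta e^{-2\pi i st/(r-1)}$, with $\zeta^{|r-1|}=1$, are glued by the involution $\zeta\mapsto\zeta^{-1}e^{2\pi i s/(r-1)}$. Its fixed points are the solutions of $\zeta^{2}=e^{2\pi i s/(r-1)}$, and there are $F=2$, $1$, $0$ of them for ($r$ odd, $s=0$), ($r$ even), ($r$ odd, $s=1$) respectively. This gives $(|r-1|+F)/2$ components, in agreement with Corollary \ref{corol-nielsen-s1}. You should write out the case $r$ odd, $s=1$ explicitly: a primitive $|r-1|$-th root of unity has no square root among the $|r-1|$-th roots of unity when $|r-1|$ is even, and this is exactly where the two values differ.

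\textbf{Items (1) and (2).} These follow the paper's route: induction, then Theorem \ref{nielsen-number-s1}(2) applied to the iterate. Your mod~$2$ reading of $s\sigma(n,r)$ is not cosmetic but necessary, since taken literally (2) is false in two ways.
\begin{itemize}
\item For $(r,s,n)=(3,1,3)$ we have $g_{3,1}^3=g_{27,13}$, whose fixed set is $13$ circles. Hence $C_{S^1}(f_{3,1}^3)\le 13$, while the literal ``else'' case predicts $[26/2]+1=14$.
\item For $(r,s,n)=(1,1,2)$, neither of the two $r^n=1$ cases applies.
\end{itemize}

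You can make the reduction explicit with the fiberwise homotopy $\langle z,t\rangle\mapsto\langle z^{r}e^{2\pi i st}e^{2\pi i(2t-1)u},t\rangle$, $u\in[0,1]$, from $g_{r,s}$ to $g_{r,s+2}$. It is well defined because the extra factor takes conjugate values at $t=0$ and $t=1$.

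You also do not need the unproved assertion that the fiberwise class is determined by $f_\#$. Since $g_{r,s}^{n}=g_{r^{n},s\sigma(n,r)}$ holds exactly, $f^{n}\sim_{S^1}g_{r^n,s\sigma(n,r)}$ follows by composing fiberwise homotopies.
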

\begin{proof}
{\bf (1)} Follows by a simple induction. {\bf (2)} Follows by Theorem \ref{nielsen-number-s1}. {\bf (3)} Follows from \cite[section 3]{G-L-V-V}. 
\end{proof}

%\begin{proposition} \label{prop-hperf-k}
%Let $f: \mathbb{K} \to \mathbb{K}$ be a fiber map over $S^{1}$ such that $f %\sim_{S^{1}} f_{r,s}$ where $r^{n} \neq 1,$ then  $n \in H_{S^{1}}Per(f)$ %if %and only if $A_n(f) \neq 0.$ 
%\end{proposition}
%\begin{proof}
%From the conditions of statement of the proposition and Proposition %\ref{prop-n-k} we have that all fixed point classes are essential. The map %$g_{r,s}$ is minimal for each $n$, that is, $\#\pi_0(Fix(g^{n}_{r,s})) =  %N_{S^{1}}(f^{n}_{r,s})$ by Proposition \ref{prop-n-k}, item $(3).$ We note %that   
%$$H_{S^{1}}P_n(f)= \#\pi_0(P_n(g_{r,s}))  = \#\pi_0 \left( %Fix(g^{n}_{r,s})  - \left\{ \bigcup_{\substack{ k | \\
%			k \,\, \textrm{divides}\,\, n}} Fix(g^{k}_{r,s}) \right \} %\right) = A_n(g_{r,s}) = A_n(f).$$ 
%Therefore the result follows.
%\end{proof}

In the rest of this section will focus to compute $H_{S^{1}}Per(f_{r,s}).$   

\begin{proposition} Let $f: \mathbb{K} \to \mathbb{K}$ be a fiber map over $S^{1}$ such that $f \sim_{S^{1}} f_{r,s}.$ If $r \geq 2$ or $r\leq -4$ then $\dfrac{N_{S^{1}}(f^{n+1})}{N_{S^{1}}(f^{n})}\geq 2$ for all $n\geq1,$ and therefore $H_{S^{1}}Per(f) = \mathbb{N}.$
\end{proposition}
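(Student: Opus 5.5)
We need to show that $N_{S^{1}}(f^{n+1})/N_{S^{1}}(f^{n}) \geq 2$ for all $n \geq 1$ when $r\geq 2$ or $r\leq -4$, and then conclude via Lemma \ref{lemma-hpers1}. Since $|r|\geq 2$, we have $r^{n}\neq 1$ for every $n$, so Proposition \ref{prop-n-k}(2) applies to every iterate. It gives $N_{S^{1}}(f^{n})$ as either $\frac{|r^{n}-1|}{2}$ or $\left[\frac{|r^{n}-1|}{2}\right]+1$. In all cases
$$\frac{|r^{n}-1|}{2} \leq N_{S^{1}}(f^{n}) \leq \frac{|r^{n}-1|}{2}+1,$$
and all these numbers are positive. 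The plan is to bound the ratio by comparing the worst cases: smallest numerator against largest denominator.

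\textbf{Case $r\geq 2$.} Here $|r^{k}-1| = r^{k}-1$, so
$$\frac{N_{S^{1}}(f^{n+1})}{N_{S^{1}}(f^{n})} \geq \frac{(r^{n+1}-1)/2}{(r^{n}-1)/2+1} = \frac{r^{n+1}-1}{r^{n}+1}.$$
This is at least $2$ iff $r^{n+1}-2r^{n}\geq 3$, i.e.\ $r^{n}(r-2)\geq 3$, which holds for $r\geq 3$. The case $r=2$ must be handled directly. Since $r$ is even, Corollary \ref{corol-nielsen-s1}(iii) applied to the iterate $f^{n}\sim_{S^{1}} f_{r^{n},\cdot}$ gives $N_{S^{1}}(f^{n}) = \frac{2^{n}-1+1}{2} = 2^{n-1}$, so the ratio is exactly $2$. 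More generally, for even $r$ the exact formula $N_{S^{1}}(f^{n}) = |r^{n}-1|/2 + 1/2$ gives the ratio $|r^{n+1}-1|+1$ over $|r^{n}-1|+1$, which equals $|r|$ when $r>0$.

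\textbf{Case $r\leq -4$.} Write $r=-a$ with $a\geq 4$. Then $|r^{k}-1| = a^{k}-(-1)^{k}$, so it lies between $a^{k}-1$ and $a^{k}+1$. Using the crude bound,
$$\frac{N_{S^{1}}(f^{n+1})}{N_{S^{1}}(f^{n})} \geq \frac{a^{n+1}-1}{a^{n}+1+2} = \frac{a^{n+1}-1}{a^{n}+3},$$
and this is at least $2$ iff $a^{n}(a-2)\geq 7$. That holds for $a\geq 4$, since $a^{n}(a-2)\geq 4\cdot 2 = 8$.

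This explains why $r=-2$ and $r=-3$ are excluded: the crude bound fails there, and indeed for $r=-2$ one gets $N(f)=N(f^{2})=2$. The only delicate point is making sure the floor and the $s\sigma(n,r)$ dichotomy are absorbed by the uniform sandwich above. I expect this to be the main, though mild, obstacle; in particular, for odd $r$ the parity of $s\,\sigma(n,r)$ changes with $n$, which is why I use only the sandwich rather than exact formulas.

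\textbf{Conclusion.} Finally, all fixed point classes of every iterate are essential, since $N_{S^{1}}(f^{k}) = R_{S^{1}}(f^{k})$ by Proposition \ref{prop-n-k}(2). Hence the index assumption holds, exactly as in Corollary \ref{corol-index}, and Lemma \ref{lemma-hpers1} yields $H_{S^{1}}Per(f)=\mathbb{N}$.
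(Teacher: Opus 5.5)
Your proof is correct. Its overall structure matches the paper's: bound $N_{S^{1}}(f^{n+1})/N_{S^{1}}(f^{n})$ below by $2$, get the index assumption from $N_{S^{1}}(f^{k})=R_{S^{1}}(f^{k})$ (so all classes are essential), and apply Lemma \ref{lemma-hpers1}.

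The difference is in how you obtain the ratio bound. The paper splits into six cases by the parities of $r$ and $n$. It computes the ratio exactly, using $N_{S^{1}}(f^{n})=\frac{|r^{n}-1|+2}{2}$ for odd $r$ and $\frac{|r^{n}-1|+1}{2}$ for even $r$. This gives sharper conclusions, such as ratio $>r-1$ or $>|r|-1$. However, it rests on the assertion that $N_{S^{1}}(f^{n}_{r,0})=N_{S^{1}}(f^{n}_{r,1})$ for all $n$. That assertion fails for odd $r$ and $s=1$ already at $n=1$: Corollary \ref{corol-nielsen-s1} gives $\frac{|r-1|}{2}$ for $s=1$ versus $\frac{|r-1|}{2}+1$ for $s=0$. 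So the paper's odd-$r$ cases really only treat $s=0$ explicitly.

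Your uniform sandwich $\frac{|r^{n}-1|}{2}\leq N_{S^{1}}(f^{n})\leq \frac{|r^{n}-1|}{2}+1$ does not depend on which branch of Proposition \ref{prop-n-k}(2) occurs at each $n$. It therefore covers $s=1$ at no extra cost and reduces everything to the two inequalities $r^{n}(r-2)\geq 3$ and $a^{n}(a-2)\geq 7$. The price is that the crude estimate fails at $r=2$, and you correctly handle that case with the exact even-$r$ formula. Your route also shows clearly why $r=-2$ and $r=-3$ fall outside the statement.
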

\begin{proof}
	From Proposition \ref{prop-n-k} (2), for $r \geq 2$ or $r\leq -4$ and for all $n\in\mathbb{N}$, we have $N_{S^{1}}(f^{n}_{r,0})=N_{S^{1}}(f^{n}_{r,1})$ because $r^{n}\neq 1$ and $s\sigma(n,r)\neq1$. Hence, we do not need different calculations for $s=0$ or $s=1$. 	
	
	The computation of this proposition was split into six cases: Case 1 for $r$ odd and $r\geq3$; Case 2 for $r$ odd, $r\leq -3$ and $n$ even; Case 3 for $r$ odd $r\leq -5$ and $n$ odd; Case 4 for $r$ even and $r\geq2$; Case 5 for $r$ even, $r\leq-2$ and $n$ even; Case 6 for $r$ even, $r\leq-4$ and $n$ odd.
	
	Case 1: $\dfrac{N_{S^{1}}(f^{n+1})}{N_{S^{1}}(f^{n})}=\dfrac{|r^{n+1}-1|+2}{|r^{n}-1|+2}=\dfrac{r^{n+1}+1}{r^{n}+1}$. If $n=1$, so $\dfrac{r^{2}+1}{r+1}= (r-1) + \dfrac{2}{r+1}>r-1\geq2$. If $n>1$, so $\dfrac{r^{n+1}+1}{r^{n}+1}=r+\dfrac{1-r}{r^{n}+1}>r-1\geq2.$
	
	Case 2: $\dfrac{N_{S^{1}}(f^{n+1})}{N_{S^{1}}(f^{n})}=\dfrac{|r^{n+1}-1|+2}{|r^{n}-1|+2}=\dfrac{-r^{n+1}+3}{r^{n}+1}=\underbrace{-r}_{=|r|}+\dfrac{r+3}{r^{n}+1}>|r|-1\geq2.$
	
	Case 3: $\dfrac{N_{S^{1}}(f^{n+1})}{N_{S^{1}}(f^{n})}=\dfrac{|r^{n+1}-1|+2}{|r^{n}-1|+2}=\dfrac{r^{n+1}+1}{-r^{n}+3}$. If $n=1$, so $\dfrac{r^{2}+1}{-r+3}= (-r-3) + \dfrac{10}{-r+3}=|r|-3+\dfrac{10}{|r|+3}$. Thus, for $r=-3$ we have $\dfrac{r^{2}+1}{-r+3}=\dfrac{10}{6}<2$. But, for $r\leq -5$ we have $\dfrac{r^{2}+1}{-r+3}=|r|-3+\dfrac{10}{|r|+3}>|r|-3\geq2$. Moreover, if $n\geq3$, so $\dfrac{r^{n+1}+1}{-r^{n}+3}=-r+\dfrac{3r+1}{-r^{n}+3}>|r|-1\geq2.$
	
	Case 4: $\dfrac{N_{S^{1}}(f^{n+1})}{N_{S^{1}}(f^{n})}=\dfrac{|r^{n+1}-1|+1}{|r^{n}-1|+1}=\dfrac{r^{n+1}}{r^{n}}=r\geq2.$
	
	Case 5: $\dfrac{N_{S^{1}}(f^{n+1})}{N_{S^{1}}(f^{n})}=\dfrac{|r^{n+1}-1|+1}{|r^{n}-1|+1}=\dfrac{-r^{n+1}+2}{r^{n}}=-r+\dfrac{2}{r^{n}}>|r|\geq2.$
	
	Case 6: $\dfrac{N_{S^{1}}(f^{n+1})}{N_{S^{1}}(f^{n})}=\dfrac{|r^{n+1}-1|+1}{|r^{n}-1|+1}=\dfrac{r^{n+1}}{-r^{n}+2}$. If $n=1$, so $\dfrac{r^{2}}{-r+2}= (-r-2) + \dfrac{4}{-r+2}=|r|-2+\dfrac{4}{|r|+2}$. Thus, for $r\leq -4$ we have $\dfrac{r^{2}}{-r+2}=|r|-2+\dfrac{4}{|r|+2}>|r|-2\geq2$. Moreover, if $n\geq3$ and $r\leq -4$, so $\dfrac{r^{n+1}}{-r^{n}+2}=-r+\dfrac{2r}{-r^{n}+2}>|r|-1\geq2.$	
	
Under the conditions of statement of the proposition $f$ satisfies the index assumption because in this situation all classes are essential. Now the result follows from Lemma \ref{lemma-hpers1}.
\end{proof}

%\begin{proposition}
%Given $f_{r,0}: \mathbb{K} \to \mathbb{K}$ a fiber map over $S^{1}$ with  $r\in \z-\{1\}$ odd and $n\in\n.$ We have $A_{n}(f)=\dfrac{|r^{n}-r^{n-1}|}{2},$ and therefore $n \in  H_{S^{1}}Per(f_{r,0}).$
%\end{proposition}
%\begin{proof}
%We suppose $r > 0.$ We have $A_{1}(f_{r,0}) = N_{S^{1}}(f_{r,0}) = \dfrac{|r -1|}{2}+1.$ Note that $A_{2}(f)= N_{S^{1}}(f^{2}_{r,0})- N_{S^{1}}(f_{r,0})=\dfrac{r^{2}-1}{2}+1-\left(\dfrac{r-1}{2}+1\right)=\dfrac{r^{2}-r}{2}.$ By induction we suppose that $A_{n}(f)=\dfrac{r^{n}-r^{n-1}}{2}.$ Then 
%$$\begin{array}{ccl}
%A_{n+1}(f) & = & N_{S^{1}}(f^{n+1}_{r,0})-\displaystyle{\sum_{i=1}^{n}}A_{i}(f)=\dfrac{r^{n+1}-1}{2}+1-\left(\dfrac{r-1}{2}+1+\left(\sum_{i=2}^{n}\dfrac{r^{i}-r^{i-1}}{2}\right)\right)\\
%& = & \dfrac{r^{n+1}-1}{2}+1-\left(\dfrac{r^{n}-1}{2}+1\right)=\dfrac{r^{n+1}-r^{n}}{2}.
%\end{array}$$
%The prove is analogous for $r < 0.$ Now the result follows from Proposition \ref{prop-hperf-k}.
%\end{proof}

\begin{proposition}
$H_{S^{1}}Per(f_{1,0}) = \{1\}.$ 
\end{proposition}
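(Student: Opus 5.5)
The statement has two inclusions to prove. For $\{1\}\subseteq H_{S^1}Per(f_{1,0})$ we use that $f_{1,0}(\langle z,t\rangle)=\langle z,t\rangle$ is the identity of $\mathbb{K}$. For the reverse inclusion we exhibit one map $g\sim_{S^1} f_{1,0}$ with $Per(g)=\{1\}$.

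\emph{Period $1$ is forced.} By Theorem \ref{nielsen-number-s1}, item (2), with $r=1$ and $s=0$ we have $C_{S^1}(f_{1,0})=N_{S^1}(f_{1,0})=1$; this is also Corollary \ref{corol-nielsen-s1}, item (iv). Since $C_{S^1}(f_{1,0})=1>0$, every $g\sim_{S^1} f_{1,0}$ has a fixed point. A fixed point has minimal period $1$, so $1\in Per(g)$ for every such $g$, and hence $1\in H_{S^1}Per(f_{1,0})$.

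\emph{No other period is forced.} It suffices to find a single fiber map $g\sim_{S^1} f_{1,0}$ with $Per(g)=\{1\}$; then $H_{S^1}Per(f_{1,0})\subseteq Per(g)=\{1\}$. The identity itself works: $f_{1,0}=Id_{\mathbb{K}}$, every point is fixed, so every point has minimal period $1$ and $Per(Id_{\mathbb{K}})=\{1\}$. No deformation is needed. The only thing to check is that $z^1[t]^0=z$ is compatible with the identification $(z,0)\sim(\overline{z},1)$. It is, because conjugation commutes with the identity, so $f_{1,0}$ is well defined and equals $Id_{\mathbb{K}}$.

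Combining the two inclusions gives $H_{S^1}Per(f_{1,0})=\{1\}$. There is essentially no obstacle here. The only substantive input is $N_{S^1}(f_{1,0})=1$, taken from \cite[Theorem 1.3]{G-K-09}. That fact is what distinguishes this case from the torus, where $f_{1,0}$ can be made fixed point free. Notably, we do not need the index assumption or Theorem \ref{th-nbpn}: those tools would fail here anyway, because $R_{S^1}(f_{1,0})=\infty$.
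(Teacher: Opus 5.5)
Your proof is correct and matches the paper's argument. Period $1$ is forced because the identity of $\mathbb{K}$ cannot be deformed over $S^1$ to a fixed point free map, and $Per(Id_{\mathbb{K}})=\{1\}$ gives the reverse inclusion. The only difference is that you take the first fact from $C_{S^1}(f_{1,0})=1$ in Theorem \ref{nielsen-number-s1}, whereas the paper cites \cite{G-87} for it.
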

\begin{proof}
By \cite{G-87} the identity map $Id = f_{1,0}: \mathbb{K} \to \mathbb{K}$ cannot be deformed over $S^{1}$ to a fixed point free map. Since $Id^{n} = Id$ then the result follows.	
\end{proof}

\begin{proposition}
$H_{S^{1}}Per(f_{1,1}) = \{2\}.$ 
\end{proposition}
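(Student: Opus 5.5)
We have to show two inclusions: period $2$ lies in the minimal periods of every map fiberwise homotopic to $f_{1,1}$, and there is a single representative of the class whose only minimal period is $2$. We work with the iterates of $f=f_{1,1}$. By Proposition \ref{prop-n-k}(1), $f^n_\#(\alpha)=\alpha$ and $f^n_\#(\beta)=\alpha^{n}\beta$, since $\sigma(n,1)=n$. Write $s_n=s\,\sigma(n,r)$ for the twisting parameter of $f^n$. Using the normal form $f_{r,s}$ with $s\in\{0,1\}$, and the fact that on the Klein bottle $\alpha^{s}\beta$ and $\alpha^{s+2}\beta$ give fiberwise homotopic maps, $f^n$ falls into the class $(1,n \bmod 2)$.

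\textbf{Step 1: the iterates.} By Proposition \ref{prop-n-k}(2) (the case $r^n=1$), together with Theorem \ref{nielsen-number-s1}(2),
\[
N_{S^1}(f^n)=C_{S^1}(f^n)=\begin{cases}0,& n \text{ odd},\\ 1,& n\text{ even}.\end{cases}
\]
This step rests on the reduction of the twisting parameter modulo $2$, which comes from \cite[Proposition 3.6]{G-L-V-V}. I expect this reduction to be the delicate point. I would check it directly: conjugating the fiber coordinate by $z\mapsto z\,e^{\pi i t}$-type rotations shifts $s$ by $2$. Alternatively, I would note that the Reidemeister/Nielsen formula in Theorem \ref{nielsen-number-s1} only depends on the parity of $s$.

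\textbf{Step 2: $2\in H_{S^1}Per(f)$.} Let $g\sim_{S^1} f$. Then $g^2\sim_{S^1} f^2$, so $g^2$ has a fixed point, since $N_{S^1}(f^2)=1$. Also $g\sim_{S^1} f$ has no forced fixed points, but more to the point, \emph{any} fixed point of $g$ would itself be a fixed point of $g^2$. I would argue with fixed point classes over $S^1$ instead. A point of $Fix(g)$ lies in a Nielsen class of $g^2$ that contains $Fix(g)$. If every point of $Fix(g^2)$ were in $Fix(g)$, then the essential class of $g^2$ would consist of points in classes of $g$. All classes of $g$ are inessential, because $N_{S^1}(g)=0$.

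To conclude, I would use the index relation for the class reached by iteration: the essential class of $g^2$ would have to come from a class of $g$. In the spirit of Proposition \ref{prop-hpers1} with $n=2$ we have $N_{S^1}(f)=0<1=N_{S^1}(f^2)$, so the period-$2$ class cannot be accounted for by fixed points. The obstacle is the index assumption: the classes of $f$ are inessential, and a class of $f^2$ containing them is essential. I would check directly that $Fix(f^2)$ and the images of the classes of $f$ sit in different Reidemeister classes. A fixed point $x$ of $g$ gives a class of $g^2$ whose "$\beta$-twist" is of the form $\alpha^{2k+1}\beta\cdot$(conjugation). This is incompatible with the essential class of $f^2$, which corresponds to $\alpha^{\text{even}}$ twisting. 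That suffices to get a point of minimal period exactly $2$.

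\textbf{Step 3: the representative $g=g_{1,1}(\langle z,t\rangle)=\langle z\,e^{2\pi i t},t\rangle$.} Its iterate is $g^n(\langle z,t\rangle)=\langle z e^{2\pi i n t},t\rangle$. This is well defined because at $t=1$ we have $\overline{z e^{2\pi i n}}=\bar z$, consistent with the gluing. A fixed point of $g^n$ requires $e^{2\pi i nt}=1$ on the interior of the strip, so $t=k/n$. The gluing identifies $(z,0)$ with $(\bar z,1)$, and I would analyze the fibres $t=k/n$ directly. One has to be careful: $g$ should be taken as the map \emph{descended} from a lift with $e^{2\pi i t}$ replaced by $e^{\pi i t}$-type twisting so that it is fixed point free, since $g_{1,1}$ as written fixes the whole fibre $t=0$.

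So I would choose the explicit representative $h(\langle z,t\rangle)=\langle -z,t\rangle$. On $K$ this is well defined because $\overline{-z}=-\bar z$, and it induces $\alpha\mapsto\alpha$, $\beta\mapsto\alpha\beta$, since the fibre rotation by $\pi$ conjugated around $\beta$ reverses direction. Then $h$ has no fixed points and $h^2=Id$, so $Per(h)=\{2\}$. Combined with Step 2, this gives $H_{S^1}Per(f_{1,1})=\{2\}$. The remaining check is that $h\sim_{S^1} f_{1,1}$, i.e.\ that its fibre-twist class is $s=1$; this follows by comparing $h_\#$ with Proposition \ref{prop-n-k}(1).
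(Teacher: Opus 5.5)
\textbf{Verdict: the inclusion $H_{S^1}Per(f_{1,1})\subseteq\{2\}$ is correct and follows the paper; your Step~2 does not prove the reverse inclusion.}

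\textbf{Upper bound.} Your Step~3 is the paper's whole proof. The paper takes $h(\langle z,t\rangle)=\langle -z,t\rangle$, cites [G-L-V-V, Prop.~3.8] for $h\sim_{S^1}f_{1,1}$, notes that $h$ is fixed point free with $h^2=Id$, and concludes. Your $h_\#$ computation can replace that citation if the connecting path is taken in the fibre. This settles $H_{S^1}Per(f_{1,1})\subseteq\{2\}$.

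\textbf{The gap: $2\in H_{S^1}Per(f_{1,1})$.} The paper does not argue this inclusion separately, and your Step~2 does not prove it. You are right that $N_{S^1}(g^2)=1$ only gives a fixed point of $g^2$. You are also right that applying Proposition~\ref{prop-hpers1} with $N_{S^1}(f)=0<1=N_{S^1}(f^2)$ needs exactly this: for every $g\sim_{S^1}f_{1,1}$, the essential class of $g^2$ contains no point of $Fix(g)$. But you only assert it:
\begin{enumerate}
\item the ``$\beta$-twist'' parities are neither defined nor computed;
\item a class of $g^2$ is compared with a class of $f^2$ without saying how the homotopy matches them;
\item the sentence ``a class of $f^2$ containing them is essential'' describes exactly the situation in which the argument would fail.
\end{enumerate}
A smaller point: conjugating by fibre rotations cannot shift $s$, since they commute with $f_{1,s}$. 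The reduction $s\mapsto s+2$ is a fiberwise homotopy, and you do not need it anyway.

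\textbf{How to close it.} The missing step can be done by hand.
\begin{enumerate}
\item Write $g(\langle z,t\rangle)=\langle g_t(z),t\rangle$, where each $g_t$ has degree $1$. Lift to $G:\mathbb{R}\times[0,1]\to\mathbb{R}$ with $g_t(e^{2\pi i x})=e^{2\pi i G(x,t)}$. Then $G(x+1,t)=G(x,t)+1$.
\item The gluing $(z,0)\sim(\bar z,1)$ forces $G(x,1)+G(-x,0)=m$ for a constant $m\in\mathbb{Z}$. Changing the lift changes $m$ by an even integer, and $m$ is constant along a lifted fiberwise homotopy.
\item $f_{1,1}$ has lift $x+t$, giving $m=1$. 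Hence $m$ is odd for every $g\sim_{S^1}f_{1,1}$.
\item Put $D_2(x,t)=G(G(x,t),t)-x$. A direct computation gives $D_2(x,1)=2m-D_2(-x,0)$. So the interval $D_2(\mathbb{R}\times[0,1])$ contains some $a$ together with $2m-a$, and therefore contains $m$.
\item If $D_2(x_0,t_0)=m$, then $\langle e^{2\pi i x_0},t_0\rangle$ is fixed by $g^2$. If it were also fixed by $g$, we would have $G(x_0,t_0)=x_0+j$ with $j\in\mathbb{Z}$, hence $D_2(x_0,t_0)=2j\neq m$.
\end{enumerate}
So every $g\sim_{S^1}f_{1,1}$ has a point of minimal period $2$. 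This is your Reidemeister-class separation made explicit: points of $Fix(g)$ only produce even values of $D_2$, while the unavoidable value $m$ is odd.
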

\begin{proof}
By \cite[Proposition 3.8]{G-L-V-V} the map $f_{1,1}$ is homotopic over $S^{1}$ to the map $h(<z,t>) = $ $<z  exp(\pi i), t>.$ The map $h$ has no fixed point, and this implies $1 \notin H_{S^{1}}Per(f_{1,1}).$ However 
$$ h^{n}(<z,t>) = \left\{  \begin{array}{lll}
	h^{2}(<z,t>) & if & n \,\, is \,\, even \\
	h(<z,t>) & if & n \,\, is \,\, odd \\
\end{array} \right. $$
Therefore $H_{S^{1}}Per(f_{1,1}) = H_{S^{1}}Per(h)  = \{2\}.$
\end{proof}

\begin{proposition} $H_{S^{1}}Per(f_{0,s}) = \{1\}$. 
\end{proposition}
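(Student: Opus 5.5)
We prove the two inclusions $\{1\}\subseteq H_{S^{1}}Per(f_{0,s})$ and $H_{S^{1}}Per(f_{0,s})\subseteq\{1\}$ separately, treating $s=0$ and $s=1$ together, following the torus argument for $r=0$.

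For the inclusion $\{1\}\subseteq H_{S^{1}}Per(f_{0,s})$ we use Proposition \ref{prop-n-k}(2) with $r=0$. For every $n$ we have $r^{n}=0\neq 1$. Also $\sigma(n,0)=1$, since only the $i=0$ term survives (with the convention $0^{0}=1$). Since $r=0$ is even, the formula gives
$$C_{S^{1}}(f^{n}_{0,s})=N_{S^{1}}(f^{n}_{0,s})=\left[\tfrac{1}{2}\right]+1=1$$
for all $n$ and both values of $s$. In particular $N_{S^{1}}(f_{0,s})=1>0$. Hence every map fiberwise homotopic to $f_{0,s}$ has a fixed point, and a fixed point has minimal period $1$. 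So $1\in H_{S^{1}}Per(f_{0,s})$.

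For the reverse inclusion we exhibit a single representative with no periodic points of minimal period greater than $1$. The natural candidate is $g=g_{0,s}$ from Proposition \ref{prop-n-k}(3), namely
$$g(<z,t>)=<\exp(2\pi i st),t>,$$
which satisfies $g\sim_{S^{1}} f_{0,s}$. Its image is the section $t\mapsto <\exp(2\pi i st),t>$, and $g$ restricted to that section is the identity, since $g$ depends only on $t$. Therefore $g^{2}=g$. If $g^{n}(x)=x$ for some $n\ge 1$, then $x=g^{n}(x)=g(x)$, so $x$ is a fixed point. Thus $Per(g)\subseteq\{1\}$, and combined with the first inclusion we get $H_{S^{1}}Per(f_{0,s})=\{1\}$.

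The main obstacle is checking that $g$ is well defined on $\mathbb{K}$, that is, compatible with the identification $(z,0)\sim(\bar z,1)$. For $s=0$ the image point is $<1,t>$, and $1=\bar 1$, so there is no issue. For $s=1$, at $t=0$ we get $<1,0>$ and at $t=1$ we get $<e^{2\pi i},1>=<1,1>\sim<1,0>$, so $g$ is consistent. The identity $g^{2}=g$ then holds pointwise on $\mathbb{K}$. If this representative caused trouble, a fallback would be to argue as in the torus case: $N_{S^{1}}(f^{n})=1$ for all $n$ gives $A_{1}=1$ and $A_{n}=0$ for $n\ge2$, and the toral-type result, Theorem \ref{th-nbpn}, would then be used. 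The explicit idempotent representative avoids needing that.
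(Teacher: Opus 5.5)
Your proof is correct and follows essentially the same route as the paper: $N_{S^{1}}(f_{0,s})=1$ forces period $1$, and the representative $g_{0,s}(<z,t>)=<\exp(2\pi i st),t>$ from Proposition \ref{prop-n-k}(3) rules out every period $n>1$. Your observation that $g_{0,s}^{2}=g_{0,s}$ is actually a cleaner justification than the paper's. The paper checks only $Fix(g_{0,s}^{2})=Fix(g_{0,s})$ and then asserts $Fix(g_{0,s}^{n})=Fix(g_{0,s})$ for all $n$, a step that really rests on the idempotence you make explicit.
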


\begin{proof}
From Corollary \ref{corol-nielsen-s1} \ref{thn-2-3} $N_{S^{1}}(f_{0,s})=1$, so $1\in H_{S^{1}}Per(f_{0,s})$. From Proposition \ref{prop-n-k} (3), $f_{0,s}\sim_{S^{1}} g_{0,s}$, such that $g_{0,s}(<z,t>) = < z^{0}exp(2\pi i s t), t>$. 
$$\begin{array}{cl}
\text{For} \ s=0 & <z,t>=g_{0,0}(<z,t>) = <1,t> \Leftrightarrow z=1 \ \text{and} \\
 & <z,t>=g^{2}_{0,0}(<z,t>) = <1,t> \Leftrightarrow z=1.\\
\text{For} \ s=1 & <z,t>=g_{0,1}(<z,t>) = <exp(2\pi i t),t> \Leftrightarrow z=exp(2\pi i t) \ \text{and} \\
 & <z,t>=g^{2}_{0,1}(<z,t>) = <exp(2\pi i t),t> \Leftrightarrow z=exp(2\pi i t).
\end{array}$$
Thus, $Fix(g_{0,s}^{2})=Fix(g_{0,s})$. Consequently, $Fix(g_{0,s}^{n})=Fix(g_{0,s})$, for $n\in\mathbb{N}$. Therefore, if $n>1$ then $n\notin Per(g_{0,s})$ and $n\notin H_{S^{1}}Per(f_{0,s})$. 
\end{proof}

\begin{proposition} $H_{S^{1}}Per(f_{-1,0}) = \{1\}$. 
\end{proposition}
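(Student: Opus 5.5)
We must show two things: $1 \in H_{S^1}Per(f_{-1,0})$, and for every $n \geq 2$ there is a representative of the fiberwise homotopy class with no point of minimal period $n$. The plan mirrors the proposition for $f_{0,s}$ just proved: the first part uses the Nielsen number, the second an explicit representative whose iterates are easy to control.

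For $n=1$, Corollary \ref{corol-nielsen-s1} \ref{thn-2-1} with $r=-1$, $s=0$ gives
\[
N_{S^1}(f_{-1,0}) = \frac{|-1-1|}{2} + 1 = 2 \neq 0 .
\]
Since the Nielsen number over $S^1$ is a fiberwise homotopy invariant and bounds $C_{S^1}(f)$ from below, every $g \sim_{S^1} f_{-1,0}$ has a fixed point. Hence $1 \in Per(g)$ for all such $g$, and so $1 \in H_{S^1}Per(f_{-1,0})$.

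For $n \geq 2$, note that $f_{-1,0}(\langle z,t\rangle) = \langle \bar z, t\rangle$ is an involution: $f_{-1,0}^2 = Id$. It is well defined on $\mathbb{K}$ because conjugation commutes with the gluing $(z,0)\sim(\bar z,1)$. Consequently every point satisfies $f_{-1,0}^2(x)=x$, so every minimal period lies in $\{1,2\}$. This already gives $n \notin Per(f_{-1,0})$ for all $n \geq 3$.

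The main obstacle is period $2$, since $f_{-1,0}$ itself has many points of minimal period $2$: every $\langle z,t\rangle$ with $z \neq \bar z$. So we need a different representative. The idea is to collapse each fiber so that period-$2$ orbits disappear. On each fiber circle, $z \mapsto \bar z$ is homotopic, through maps of degree $-1$ equivariant under conjugation, to a map $\phi$ whose only points are fixed or eventually fixed. A concrete choice is $\phi(e^{2\pi i\theta}) = e^{2\pi i \psi(\theta)}$, where:
\begin{itemize}
\item[] $\psi$ is odd, so $\phi(\bar z) = \overline{\phi(z)}$ and $\phi$ is compatible with the gluing;
\item[] $\psi$ sends the upper semicircle onto the lower one with degree $-1$ overall;
\item[] $\phi$ fixes only $\pm 1$;
\item[] $\phi^2$ has no fixed points other than $\pm1$.
\end{itemize}
Alternatively, one can take a straight-line deformation in the lift, $\psi_\lambda$, from $-\theta$ to such a $\psi$ while keeping oddness throughout. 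Because $\phi$ is independent of $t$ and commutes with conjugation, $g(\langle z,t\rangle) = \langle \phi(z),t\rangle$ is a fiber map, fiberwise homotopic to $f_{-1,0}$, with the homotopy applied fiberwise and uniformly in $t$. The remaining step is to check the circle map claim: a degree $-1$ circle map can be chosen with exactly $2$ fixed points and no points of minimal period $n \geq 2$. This holds by the known circle results behind \cite[Theorem 6.1.4]{J-M-06}, which give $HPer$ of degree $-1$ equal to $\{1\}$, realized for instance by a map that is monotone decreasing on the lift. Ensuring this realization can be made conjugation-equivariant is the delicate point.

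I would do this explicitly: choose $\psi$ with $\psi(-\theta) = -\psi(\theta)$, fixing $0$ and $1/2$, and pushing all other points monotonically toward the fixed points under iteration, so that $Fix(\phi^n) = Fix(\phi) = \{\pm1\}$. Then $Fix(g^n) = Fix(g)$ for all $n$, and so $Per(g) = \{1\}$. Combined with the Nielsen bound from the $n=1$ step, this yields $H_{S^1}Per(f_{-1,0}) = \{1\}$.
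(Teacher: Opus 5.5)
Your proof is correct, and it differs from the paper's at the one step that matters, period $2$. For $n=1$ (via $N_{S^1}(f_{-1,0})=2$) and for $n\ge 3$ (via $f_{-1,0}^2=Id$), the paper argues exactly as you do.

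For period $2$, however, the paper uses the explicit map $\langle x,y\rangle\mapsto\langle \frac18\sin(2\pi x)+\frac12,y\rangle$ in the square model of the Klein bottle. That map sends every fiber into the arc $\frac38\le x\le\frac58$, so its fiber degree is $0$, not $-1$. Consistently with this, its fixed set is a single circle, whereas $C_{S^1}(f_{-1,0})=N_{S^1}(f_{-1,0})=2$. So the paper's map is not in the fiberwise class of $f_{-1,0}$. Your requirement of a degree $-1$, conjugation-equivariant fiber map is exactly what a valid argument needs. As a bonus, your single representative $g$ removes every $n\ge 2$ at once, so the involution remark becomes unnecessary.

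The only step you leave as a sketch, the existence of $\phi$, is routine. Take $\psi(\theta)=-\theta-\epsilon\sin(2\pi\theta)$ with $0<\epsilon<\frac{1}{2\pi}$.
\begin{itemize}
\item $\psi$ is odd, strictly decreasing, and satisfies $\psi(\theta+1)=\psi(\theta)-1$.
\item $\psi_\lambda(\theta)=-\theta-\lambda\epsilon\sin(2\pi\theta)$, for $\lambda\in[0,1]$, is an odd straight-line deformation from $-\theta$, which gives the fiberwise homotopy to $f_{-1,0}$.
\item Put $h(\theta)=\theta+\epsilon\sin(2\pi\theta)$. This is an increasing homeomorphism of $[0,\frac12]$ with $h(\theta)>\theta$ on $(0,\frac12)$.
\item Then $\psi=-h$ on $[0,\frac12]$ and $\psi(-u)=h(u)$. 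Hence $\phi$ swaps the two open half-circles, and $\phi^2$ acts on the upper one as $h\circ h$ and on the lower one as its conjugate under $u\mapsto -u$.
\end{itemize}
Odd iterates therefore have no fixed points off $\{\pm1\}$, because they swap the half-circles. Even iterates act as $h^{2k}$, which satisfies $h^{2k}(\theta)>\theta$ in the interior, so they have none either. This gives $Fix(\phi^n)=\{\pm1\}$ for every $n$, hence $Per(g)=\{1\}$, which completes your argument.
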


\begin{proof}

From Corollary \ref{corol-nielsen-s1} \ref{thn-2-1} $N_{S^{1}}(f_{-1,0})=2$, so $1\in H_{S^{1}}Per(f_{-1,0})$. To prove that $2\notin H_{S^{1}}Per(f_{-1,0})$ the Klein bottle is obtained by the quotient $\displaystyle \dfrac{I \times I}{\substack{(x,0) \sim (1-x,1) \\(0,y) \sim (1,y)}}$ and we consider 
$$f\left(<x,y>\right) = <\dfrac{1}{8}\sin(2\pi x)+\dfrac{1}{2},y>.$$
We observe that $f_{\#}(\alpha)=\alpha^{-1}$, $f_{\#}(\beta)=\beta$ and $Fix(f)=\{<\frac{1}{2},y>; \ y\in I\}.$ Furthermore, 
$$ f^{2}(<x,y>)= f\left(<\dfrac{1}{8}\sin(2\pi x)+\dfrac{1}{2},y>\right) = <\dfrac{1}{8}\sin\left(2\pi \left(\dfrac{1}{8}\sin(2\pi x)+\dfrac{1}{2}\right)\right)+\dfrac{1}{2},y> $$
$$=<\dfrac{1}{8}\sin\left(\dfrac{\pi}{4}\sin(2\pi x)+\pi\right)+\dfrac{1}{2},y> =<-\dfrac{1}{8}\sin\left(\dfrac{\pi}{4}\sin(2\pi x)\right)+\dfrac{1}{2},y>$$
and $Fix(f^{2})= Fix(f)$. Therefore, $2\notin Per(f)$ and $2\notin H_{S^{1}}Per(f_{-1,0}).$

\

Returning to $f_{-1,0}(<z,t>) = <z^{-1},t>$,  we have $$f^{3}_{-1,0}(<z,t>)=f_{-1,0}(<z,t>) \hspace{0.3cm} \text{and} \hspace{0.3cm} f^{4}_{-1,0}(<z,t>)=f^{2}_{-1,0}(<z,t>).$$ So, $n\notin Per(f_{-1,0})$ if $n\geq3$. Hence, $H_{S^{1}}Per(f_{-1,0})=\{1\}.$
\end{proof}

\begin{proposition} $H_{S^{1}}Per(f_{-1,1}) = \{1\}$. 
\end{proposition}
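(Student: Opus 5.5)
The plan follows the proof of the preceding proposition for $f_{-1,0}$. There are two steps: show $1\in H_{S^{1}}Per(f_{-1,1})$, then exhibit a representative of the fiberwise class with no points of minimal period $n\geq 2$.

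\textbf{Step 1: period $1$ is forced.} By Corollary \ref{corol-nielsen-s1} \ref{thn-2-2} with $r=-1$, $s=1$ we get $N_{S^{1}}(f_{-1,1})=\frac{|-1-1|}{2}=1\neq 0$. Hence every $g\sim_{S^{1}} f_{-1,1}$ has a fixed point, and $1\in H_{S^{1}}Per(f_{-1,1})$.

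\textbf{Step 2: a representative with no higher periods.} I would use the model $g_{-1,1}(<z,t>)=<z^{-1}\exp(2\pi i t),t>$ from Proposition \ref{prop-n-k} (3), which is homotopic to $f_{-1,1}$ over $S^{1}$. On each fiber, $z\mapsto \lambda z^{-1}$ with $\lambda=\exp(2\pi i t)$ is a reflection of the circle, hence an involution:
$$g^{2}_{-1,1}(<z,t>)=<\lambda(\lambda z^{-1})^{-1},t>=<z,t>.$$
The main thing to check is that this computation is compatible with the identification $(z,0)\sim(\bar z,1)$. I expect it is: $g^{2}$ is the identity on each slice $S^{1}\times\{t\}$, and the gluing only matches $t=0$ with $t=1$, so $g_{-1,1}^{2}=Id$ on $\mathbb{K}$. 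Consequently $g_{-1,1}^{n}=g_{-1,1}$ for $n$ odd and $g_{-1,1}^{n}=Id$ for $n$ even. For odd $n\geq 3$, every $x$ with $g^{n}(x)=x$ is already fixed by $g$. For even $n$, every point has $g^{2}(x)=x$, so its minimal period is $1$ or $2$. Thus $Per(g_{-1,1})\subseteq\{1,2\}$.

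\textbf{Step 3: removing period $2$.} This is the main obstacle, since $g_{-1,1}$ itself has many points of period exactly $2$ (every non-fixed point of a reflection). I would imitate the $f_{-1,0}$ argument and deform the fiber reflection into a map with a single attracting fixed point. In the square model $\frac{I\times I}{(x,0)\sim(1-x,1),(0,y)\sim(1,y)}$, the fixed point of the reflection $z\mapsto \lambda \bar z$ moves as $t$ varies. So I would take
$$f(<x,y>)=<\tfrac18\sin(2\pi(x-c(y)))+c(y),y>,$$
where $c(y)$ is a continuous choice of the fixed point $\lambda^{1/2}$ of the reflection, written in the $x$-coordinate modulo $1$. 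On each fiber this is a degree $-1$ map contracting toward $c(y)$, so iterates converge monotonically in the alternating sense and $Fix(f^{2})=Fix(f)=\{<c(y),y>\}$, exactly as in the computation for $f_{-1,0}$.

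Two things must then be verified. First, $f$ must respect the identifications: the gluing $x\mapsto 1-x$ at $y=0,1$ sends $c(0)$ to $c(1)$ consistently, and one may need to add $\frac12$ to $c$ depending on the convention; I would choose $c$ so that $1-c(0)\equiv c(1)$. Second, $f$ must be fiberwise homotopic to $f_{-1,1}$. Here I would use the classification: $f_{\#}(\alpha)=\alpha^{-1}$, and since the fixed-point section $c$ winds once relative to the $s=0$ case, $f_{\#}(\beta)=\alpha\beta$. By \cite[Proposition 3.6]{G-L-V-V} the pair $(r,s)=(-1,1)$ determines the fiberwise class. With $Fix(f^{n})=Fix(f)$ for all $n$, we get $Per(f)=\{1\}$, and together with Step 1 this gives $H_{S^{1}}Per(f_{-1,1})=\{1\}$.
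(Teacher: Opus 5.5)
\textbf{Verdict: Steps 1 and 2 are fine, but Step 3 fails.} Steps 1 and 2 match the paper's argument. $N_{S^1}(f_{-1,1})=1$ forces period $1$, and the involution $g_{-1,1}$ removes every $n\geq 3$. The problem is Step 3, and it cannot be repaired.

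\textbf{What goes wrong in Step 3.} First, the map $x\mapsto \frac18\sin(2\pi(x-c(y)))+c(y)$ has image an arc of length $\frac14$ in each fiber. So its fiber degree is $0$, not $-1$. It gives $f_\#(\alpha)=1$ and lies in the class of some $f_{0,s}$, not $f_{-1,1}$. The paper's explicit map $<\frac18\sin(2\pi x)+\frac12+y,y>$ has exactly the same defect, so agreeing with the paper does not validate this step.

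Second, a genuine degree $-1$ version does not help. Take for instance $x\mapsto 2c(y)-x-\epsilon\sin(2\pi(x-c(y)))$, which repels at $c(y)$ and attracts at $c(y)+\frac12$. It needs the continuous choice of fixed point you describe, and that choice does not exist. The fixed points of $x\mapsto y-x$ are $c(y)=\frac y2$ or $\frac y2+\frac12$. Both branches give $c(0)+c(1)\equiv\frac12$, never $0 \pmod 1$, and adding $\frac12$ to $c$ changes nothing. Geometrically, going once around the base interchanges the two fixed points of the fiber reflection. This is why $Fix(g_{-1,1})$ is a single circle. So ``attracting'' and ``repelling'' cannot be assigned consistently.

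\textbf{In fact period $2$ is forced.} Suppose $g(<x,y>)=<\phi_y(x),y>$ is fiberwise homotopic to $f_{-1,1}$ with $Fix(g^2)=Fix(g)$.
\begin{enumerate}
\item Choose lifts $F_y$ of $\phi_y$, continuous in $y\in[0,1]$. Each $\phi_y$ has degree $-1$. Its fixed points split into two Nielsen classes $C_k(y)=p(Fix(F_y+k))$, $k\in\{0,1\}$, each of $\phi_y$-index $1$.
\item Let $i_k(y)$ be the index of $\phi_y^2$ on $C_k(y)$. Since $Fix(\phi_y^2)=C_0(y)\sqcup C_1(y)$, the $i_k$ are locally constant in $y$, and $i_0+i_1=L(\phi_y^2)=0$.
\item Each $i_k$ is odd, by Dold's congruence $\mathrm{ind}(\phi^2)\equiv \mathrm{ind}(\phi)\pmod 2$.
\item The gluing gives $\phi_1(x)=-\phi_0(-x)$, so $F_1(x)=-F_0(-x)+m$ with $m\in\mathbb{Z}$. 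The parity of $m$ is a fiberwise homotopy invariant, and $m=1$ for $g_{-1,1}$.
\item Hence the gluing carries $C_0(1)$ onto $C_1(0)$. Since index is invariant under conjugation by $x\mapsto -x$, we get $i_0(0)=i_0(1)=i_1(0)$. Together with $i_0+i_1=0$ this forces $i_0=i_1=0$, contradicting oddness.
\end{enumerate}
Therefore every map in the fiberwise class of $f_{-1,1}$ has points of minimal period $2$. The correct value is $H_{S^1}Per(f_{-1,1})=\{1,2\}$, and the statement as given cannot be proved, by your route or the paper's.
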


\begin{proof}

From Corollary \ref{corol-nielsen-s1} \ref{thn-2-2} $N_{S^{1}}(f_{-1,1})=1$, so $1\in H_{S^{1}}Per(f_{-1,1})$. From Proposition \ref{prop-n-k} (3), $f_{-1,1}\sim_{S^{1}} g_{-1,1}$, such that $g_{-1,1}(<z,t>) = < z^{-1}exp(2\pi i t), t>$. Observe that
$$ g^{2}_{-1,1}(<z,t>) = g_{-1,1}(<z^{-1}exp(2\pi i t),t>)=<(z^{-1}exp(2\pi i t))^{-1}exp(2\pi i t),t>=<z,t>.$$
Therefore,
$$ g^{n}_{-1,1}(<z,t>) = \left\{  \begin{array}{lll}
	<z,t> & if & n \,\, is \,\, \text{even} \\
	g_{-1,1}(<z,t>) & if & n \,\, is \,\, \text{odd}. \\
\end{array} \right. $$  
Hence, if $n>2$ then $n\notin Per(g_{-1,1})$ and $n\notin H_{S^{1}}Per(f_{-1,1})$.

To prove that $2\notin H_{S^{1}}Per(f_{-1,1})$ the Klein bottle is obtained by the quotient $\displaystyle \dfrac{I \times I}{\substack{(x,0) \sim (1-x,1) \\(0,y) \sim (1,y)}}$ and we consider 
$$f\left(<x,y>\right)=  <\dfrac{1}{8}\sin(2\pi x)+\frac{1}{2}+y,y> .$$ 
We observe that $f_{\#}(\alpha)=\alpha^{-1}$, $f_{\#}(\beta)=\alpha\beta$ and 
$$Fix(f)=\left\{<x,x+\frac{1}{2}-\frac{1}{8}sin(2\pi x) > ; \ 0\leq x \leq \dfrac{1}{2}\right\} \cup \left\{<x,x-\frac{1}{2}-\frac{1}{8}sin(2\pi x) > ; \ \dfrac{1}{2}\leq x \leq 1\right\}.$$

Moreover, $$f^{2}\left(<x,y>\right)= f\left(<\dfrac{1}{8}\sin(2\pi x)+\dfrac{1}{2}+y,y>\right)=<\dfrac{1}{8}\sin\left(2\pi \left(\dfrac{1}{8}\sin(2\pi x)+\dfrac{1}{2}+y\right)\right)+\dfrac{1}{2}+y,y> $$
$$=<\dfrac{1}{8}\sin\left(\dfrac{\pi}{4}\sin(2\pi x)+\pi+2\pi y\right)+\dfrac{1}{2}+y,y> =<-\dfrac{1}{8}\sin\left(\dfrac{\pi}{4}\sin(2\pi x)+2\pi y\right)+\dfrac{1}{2}+y,y>$$
and $Fix(f^{2})= Fix(f)$. Therefore, $2\notin Per(f)$ and $2\notin H_{S^{1}}Per(f_{-1,1}).$
\end{proof}

In the following results, we shall establish an estimate that provides an upper bound for $\displaystyle \sum_{k=1}^{n-1} N_{S^{1}}(f^{k})$ satisfying $\left(  \displaystyle \sum_{k=1}^{n-1} N_{S^{1}}(f^{k}) \geq \sum_{\frac{n}{k} : prime} N_{S^{1}}(f^{k}) \right)$ while this upper bound remains strictly less than $N_{S^{1}}(f^{n})$. From Proposition \ref{prop-hpers1} follows that $n\in H_{S^{1}} Per(f).$ Some special cases will be treated separately.

\begin{proposition} $H_{S^{1}}Per(f_{-2,s}) = \mathbb{N}-\{2\}$. 
\end{proposition}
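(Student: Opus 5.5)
Compute $N_{S^{1}}(f_{-2,s}^{n})$ from Proposition \ref{prop-n-k} (2). Since $r=-2$ is even, $r^{n}\neq 1$ for every $n$, and the exceptional case ``$r$ odd'' never applies. Hence
$$N_{S^{1}}(f^{n}_{-2,s})=\left[\frac{|(-2)^{n}-1|}{2}\right]+1=\frac{|(-2)^n-1|+1}{2}.$$
This equals $2^{n-1}$ for $n$ even and $2^{n-1}+1$ for $n$ odd, independently of $s$. In particular $N_{S^{1}}(f)=2$ and $N_{S^{1}}(f^2)=2$. All classes are essential because $N=R$ at every level, so the index assumption holds and Proposition \ref{prop-hpers1} is available.

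\textbf{Exclusion of $2$.} The equality $N(f^2)=N(f)$ suggests constructing an explicit representative with $Fix(g^2)=Fix(g)$, in the style of the $r=-1$ propositions. I would use $g_{-2,s}(\langle z,t\rangle)=\langle z^{-2}\exp(2\pi i s t),t\rangle$. On each fiber this is $z\mapsto \lambda z^{-2}$, and its square is $z\mapsto \lambda^{-1}z^{4}$. So a fiber has $3$ fixed points of $g$ and $3$ of $g^2$, which means $g^2$ has no extra points on the unfolded fiber. I would then check, using the Klein bottle identification $z\sim\bar z$, that $Fix(g^2)=Fix(g)$ globally. Concretely, $g^2(z)=z$ means $z^3=\lambda$, and $g(z)=z$ means $z^{3}=\lambda$ as well. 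So $Fix(g^2)=Fix(g)$ and $2\notin Per(g)$. If the exponential twist causes trouble, I would instead model an explicit real-coordinate map on $I\times I/\!\sim$, as done for $f_{-1,0}$.

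\textbf{Forcing $n\neq 2$.} For $n=1$ we have $N(f)=2>0$. For $n\geq 3$ I would verify the strict inequality
$$\sum_{k=1}^{n-1}N_{S^1}(f^k)<N_{S^1}(f^n).$$
This suffices, since every $k$ with $n/k$ prime appears in the left sum, and then Proposition \ref{prop-hpers1} gives $n\in H_{S^1}Per(f)$. Bounding each term by $N(f^k)\le 2^{k-1}+1$, the left side is at most $2^{n-1}-1+(n-1)=2^{n-1}+n-2$, while the right side is at least $2^{n-1}$. So this crude bound fails for all $n\ge3$.

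The sharper route is to sum only over $k=n/q$ with $q$ prime. For $n\geq 3$ these satisfy $k\le n/2$, so
$$\sum_{q\mid n}N(f^{n/q})\le \omega(n)\bigl(2^{n/2-1}+1\bigr).$$
This is less than $2^{n-1}$ once $n$ is moderately large. The remaining small values are $n=3,4,5,6,8$.
\begin{itemize}
\item $n=3$: $N(f)=2<5=N(f^3)$.
\item $n=4$: $N(f^2)=2<8$.
\item $n=5$: $2<17$.
\item $n=6$: $N(f^3)+N(f^2)=5+2<32$.
\end{itemize}
I would present the general estimate cleanly, using $\omega(n)\le\log_2 n$, and check the finitely many small cases by hand. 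Alternatively, Theorem \ref{th-nbpn} with $A_n(f)>0$ could be used if toral-ness is available, but Proposition \ref{prop-hpers1} avoids that.

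\textbf{Main obstacle.} The delicate step is the construction for period $2$. I need to verify that the fiberwise formula is well defined on $\mathbb{K}$ and is fiberwise homotopic to $f_{-2,s}$ for both values of $s$. I also need to confirm that no point of exact period $2$ arises across the gluing $t=0\sim1$, where $z\mapsto\bar z$ conjugates $\lambda$. I would handle this by working in the universal-cover coordinates $x\in\mathbb{R}$, where the maps are $x\mapsto -2x+st$ and $x\mapsto 4x-st$, and solving the fixed-point equations $3x\equiv st$ for both maps modulo $1$.
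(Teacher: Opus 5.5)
Your proposal is correct and is essentially the paper's proof. Period $2$ is removed with the same representative $g_{-2,s}$ (its fiberwise homotopy to $f_{-2,s}$ is Proposition \ref{prop-n-k}(3)), for which $Fix(g^2_{-2,s})=Fix(g_{-2,s})=\{z^3=e^{2\pi i st}\}$; every $n\neq 2$ is forced by $N_{S^1}(f)=2$ and Proposition \ref{prop-hpers1}. The one difference is the final estimate: the paper bounds the prime-quotient sum by $\sum_{k=1}^{n-2}N_{S^1}(f^k)<2^{n-1}$ via an induction, whereas your bound $\omega(n)\bigl(2^{n/2-1}+1\bigr)<2^{n-1}$ already holds for every $n\geq 3$, so no separate small-case checks are needed.
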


\begin{proof}

From Proposition \ref{prop-n-k} (2), for all $n\in\mathbb{N}$, we have $N_{S^{1}}(f^{n}_{-2,0})=N_{S^{1}}(f^{n}_{-2,1})$, because $(-2)^{n}\neq 1$ and $s\sigma(n,-2)\neq1$. Hence, we do not need different calculations for $s=0$ or $s=1$.

We proceed by induction on $n$. For $n=1$ we have $N_{S^{1}}(f_{-2,s})=2$ from Corollary \ref{corol-nielsen-s1}. Hence $1\in H_{S^{1}}Per(f_{-2,s})$.   For $n=2$ and $s=0$, we observe that 
$$\begin{array}{c}
<z,t>=f_{-2,0}(<z,t>) = <z^{-2},t> \Leftrightarrow z^{3}=1 \ \text{and} \\
<z,t>=f^{2}_{-2,0}(<z,t>) = <z^{4},t> \Leftrightarrow z^{3}=1.
\end{array}$$

Thus, $2\notin Per(f_{-2,0})$, and consequently $2\notin H_{S^{1}}Per(f_{-2,0})$. 

For $n=2$ and $s=1$, $f_{-2,1}\sim_{S^{1}} g_{-2,1}$, such that $g_{-2,1}(<z,t>) = <z^{-2} exp(2\pi i t), t>$ from Proposition \ref{prop-n-k} (3).
$$\begin{array}{c}
<z,t>=g_{-2,1}(<z,t>) = <z^{-2}exp(2\pi i t),t> \Leftrightarrow z^{3}=exp(2\pi i t) \ \text{and} \\
<z,t>=g^{2}_{-2,1}(<z,t>) = <z^{4}exp(-2\pi i t),t> \Leftrightarrow z^{-3}=exp(-2\pi i t) \Leftrightarrow z^{3}=exp(2\pi i t).
\end{array}$$

Hence, $2\notin Per(g_{-2,1})$, and consequently $2\notin H_{S^{1}}Per(f_{-2,1})$.

For $n>2$ we shall use Proposition \ref{prop-hpers1} to prove that $n\in H_{S^{1}}Per(f_{-2,s})$. We begin with two preliminary observations. First, for any $m\in\mathbb{N}$, we have 
$N_{S^{1}}(f^{2m}) = \dfrac{|(-2)^{2m}-1| +1}{2}=2^{2m-1}$ and $N_{S^{1}}(f^{2m+1})  =  \dfrac{|(-2)^{2m+1}-1| +1}{2}=2^{2m}+1.$
The following we will show that for all $n>2$ we have;
$$ \displaystyle \sum_{\frac{n}{k} : prime} N_{S^{1}}(f^{k}) \leq \sum_{k=1}^{n-2} N_{S^{1}}(f^{k}) < 2^{n-1} \leq N_{S^{1}}(f^{n}) $$
which implies $n \in  H_{S^{1}}Per(f_{-2,s}).$
We start illustrating the case $n=3.$
$$\displaystyle \sum_{\frac{3}{k} : prime} N_{S^{1}}(f^{k}) = \sum_{k=1}^{3-2} N_{S^{1}}(f^{k})=N_{S^{1}}(f)=2 < 2^{3-1}=4 < 5 = N_{S^{1}}(f^{3}).$$
Similarly, for $n=4$, 
$$\displaystyle \sum_{\frac{4}{k} : prime} N_{S^{1}}(f^{k}) = \sum_{k=1}^{4-2} N_{S^{1}}(f^{k})=2+2 < 2^{4-1}= N_{S^{1}}(f^{4}).$$ 
Assume now that $\displaystyle\sum_{k=1}^{n-2} N_{S^{1}}(f^{k}) < 2^{n-1}.$  If $n=2k$, $k\in\mathbb{N}$,  then

$$\displaystyle\sum_{k=1}^{n-1} N_{S^{1}}(f^{k}) = N_{S^{1}}(f^{n-1})+ \sum_{k=1}^{n-2} N_{S^{1}}(f^{k}) < N_{S^{1}}(f^{2k-1})+2^{2k-1}=2^{2k-2}+1+2^{2k}$$

$$=2^{2k-2}(1+4)+1<2^{2k-2}8=2^{2k+1}<2^{2k+1}+1=N_{S^{1}}(f^{n+1}).$$
 Similarly, if $n=2k+1$, $k\in\mathbb{N}$,  then

$$\displaystyle\sum_{k=1}^{n-1} N_{S^{1}}(f^{k}) = N_{S^{1}}(f^{n-1})+ \sum_{k=1}^{n-2} N_{S^{1}}(f^{k}) < N_{S^{1}}(f^{2k})+2^{2k}=2^{2k-1}+2^{2k}$$

$$=2^{2k-1}(1+2)<2^{2k+1}=N_{S^{1}}(f^{n+1}).$$
\end{proof}

\begin{proposition} $H_{S^{1}}Per(f_{-3,s}) = \mathbb{N}$. 
\end{proposition}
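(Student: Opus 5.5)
We compute the Nielsen numbers of the iterates of $f_{-3,s}$ and then argue as in the case $r=-2$: we bound the contribution of the proper divisors and apply Proposition \ref{prop-hpers1}. The single failure of the ratio criterion at $r=-3$ comes from Case 3 with $n=1$, where the ratio is $10/6<2$. So Lemma \ref{lemma-hpers1} does not apply directly, and the small cases need separate treatment.

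First we compute $N_{S^{1}}(f^{n}_{-3,s})$ from Proposition \ref{prop-n-k} (2). Here $r=-3$ is odd and $r^{n}\neq 1$. Moreover $s\sigma(n,-3)=1$ happens only when $s=1$ and $n=1$, since $\sigma(2,-3)=-2$, $\sigma(3,-3)=7$, and $|\sigma(n,-3)|>1$ for $n\ge 2$.

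\begin{itemize}
\item For $s=0$: $N_{S^{1}}(f^{n})=\frac{|(-3)^{n}-1|}{2}+1$. This gives $N=3$ for $n=1$, and in general $(3^{n}+3)/2$ for odd $n$ and $(3^{n}+1)/2$ for even $n$.
\item For $s=1$: the value is the same for $n\ge 2$, but $N_{S^{1}}(f_{-3,1})=2$.
\end{itemize}

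In both cases every class of every iterate is essential, because $N=R$. Hence the index assumption holds, as in the previous proposition.

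Next we apply Proposition \ref{prop-hpers1} for each $n$.

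\begin{itemize}
\item $n=1$: $N_{S^{1}}(f)\ne 0$, so $1\in H_{S^{1}}Per(f)$.
\item $n=2$: $N(f^{2})=5$, which exceeds $N(f)\in\{2,3\}$.
\item $n\ge 3$: we prove the chain
$$\sum_{\frac{n}{k}:\,prime}N_{S^{1}}(f^{k})\le\sum_{k=1}^{n-1}N_{S^{1}}(f^{k})<N_{S^{1}}(f^{n}).$$
\end{itemize}

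To prove the chain, we use the crude bounds $N(f^{k})\le (3^{k}+3)/2$ and $N(f^{n})\ge (3^{n}+1)/2$. Summing the geometric series gives
$$\sum_{k=1}^{n-1}\frac{3^{k}+3}{2}=\frac{3^{n}-3}{4}+\frac{3(n-1)}{2}.$$
This is less than $(3^{n}+1)/2$ exactly when $3(n-1)\cdot 2<3^{n}+5$, which holds for all $n\ge 1$.

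Alternatively, the ratio $N(f^{n+1})/N(f^{n})\ge 2$ holds for every $n\ge 2$ by Case 2 and Case 3 with $n\ge 3$ of the previous proposition. We could therefore run the argument of Lemma \ref{lemma-hpers1} starting from $n=2$ and treat only the $k=1$ term separately.

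The main obstacle is the bookkeeping of this small-index defect, that is, making sure the sum over proper divisors is still strictly dominated when the ratio between the first two Nielsen numbers is below $2$. The direct geometric-series estimate above handles this uniformly, so I would present that estimate. I would check $n=3,4$ explicitly as sanity cases:

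\begin{itemize}
\item $n=3$: $N(f^{3})=15$, against a sum of at most $3$.
\item $n=4$: $N(f^{4})=41$, against a sum of at most $3+5$.
\end{itemize}

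Then we conclude $H_{S^{1}}Per(f_{-3,s})=\mathbb{N}$.
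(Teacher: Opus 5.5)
Your proof is correct and essentially the paper's. Both arguments bound $\sum_{n/k\ \mathrm{prime}}N_{S^{1}}(f^{k})$ by $\sum_{k=1}^{n-1}N_{S^{1}}(f^{k})$, show this is smaller than $N_{S^{1}}(f^{n})$, and apply Proposition \ref{prop-hpers1}, with the index assumption coming from $N=R$. The paper obtains the estimate by induction with the separating bound $3^{m}/2$, whereas you sum the geometric series in closed form. You also correctly record $N_{S^{1}}(f_{-3,1})=2$, which the paper's proof overlooks.

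One side remark. Your claim that for $s=1$ the value agrees with $s=0$ for all $n\ge 2$ is not quite right. The class of $f^{n}_{-3,1}$ depends only on $s\sigma(n,-3)$ modulo $2$, so for odd $n\ge 3$ one has $N_{S^{1}}(f^{n}_{-3,1})=(3^{n}+1)/2$ (e.g.\ $14$, not $15$, at $n=3$). This does not affect your argument, because you only use $(3^{k}+1)/2\le N_{S^{1}}(f^{k})\le(3^{k}+3)/2$, and these bounds still hold.
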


\begin{proof}
From Proposition \ref{prop-n-k} (2), for all $n\in\mathbb{N}$, we have $N_{S^{1}}(f^{n}_{-3,0})=N_{S^{1}}(f^{n}_{-3,1})$, because $(-3)^{n}\neq 1$ and $s\sigma(n,-3)\neq1$. Hence, we do not need different calculations for $s=0$ or $s=1$.

We proceed by induction on $n$. For $n=1$ we have $N_{S^{1}}(f_{-3,s})=3$ from Corollary \ref{corol-nielsen-s1}. Hence $1\in H_{S^{1}}Per(f_{-3,s})$.   For $n=2$, we obtain
$$\displaystyle \sum_{\frac{2}{k} : prime} N_{S^{1}}(f^{k}) = \sum_{k=1}^{2-1} N_{S^{1}}(f^{k})=N_{S^{1}}(f)=3 < 5 = N_{S^{1}}(f^{2}).$$
Consequently, from Proposition \ref{prop-hpers1}, $2\in H_{S^{1}}Per(f_{-3,s}).$ The following we will show by induction that 
$$ \displaystyle \sum_{\frac{m}{k} : prime} N_{S^{1}}(f^{k}) \leq \sum_{k=1}^{m-1} N_{S^{1}}(f^{k})<\dfrac{3^{m}}{2}<N_{S^{1}}(f^{m})  $$
for all $m \in \mathbb{N}.$ 
Assume that $\displaystyle\sum_{k=1}^{n-1} N_{S^{1}}(f^{k}) < \dfrac{3^{n}}{2}.$  By Proposition \ref{prop-n-k} we have 
$$N_{S^{1}}(f^{n+1})= \dfrac{|(-3)^{n+1}-1|}{2}+1 \geq \dfrac{3^{n+1}+1}{2}+1>\dfrac{3^{n+1}}{2}.$$ Note that
$$\displaystyle\sum_{k=1}^{n} N_{S^{1}}(f^{k}) = N_{S^{1}}(f^{n})+ \sum_{k=1}^{n-1} N_{S^{1}}(f^{k}) < N_{S^{1}}(f^{n})+\dfrac{3^{n}}{2}= \dfrac{|(-3)^{n}-1|}{2}+1+\dfrac{3^{n}}{2}$$

$$\leq \dfrac{3^{n}+1}{2}+1+\dfrac{3^{n}}{2}=3^{n} +\dfrac{3}{2}\leq \dfrac{3^{n+1}}{2},$$
 Therefore
$$\displaystyle \sum_{\frac{n+1}{k} : prime} N_{S^{1}}(f^{k}) \leq \sum_{k=1}^{n} N_{S^{1}}(f^{k})<\dfrac{3^{n+1}}{2}<N_{S^{1}}(f^{n+1}).$$
From Proposition \ref{prop-hpers1} we obtain $H_{S^{1}}Per(f_{-3,s}) = \mathbb{N}.$
\end{proof}

The Theorem \ref{main-result-2} follows from the above results.

%%%%%%%%%%%%%%%%%%%%%%%%%%%%%%%%%%%%%%%%%%%%%%%%%%%%%%

%%%%%%%%%%%%%%%%%%%%%%%%%%%%%%%%%%%%%%%%%%%%%%%%%%%%%%
% BIBLIOGRAPHY
%%%%%%%%%%%%%%%%%%%%%%%%%%%%%%%%%%%%%%%%%%%%%%%%%%%%%%

\end{document}